\numberwithin{equation}{section}
\newtheorem{theorem}{Theorem}
\newtheorem{remark}[theorem]{Remark}
\newtheorem{definition}[theorem]{Definition}
\newtheorem{proposition}[theorem]{Proposition}
\newtheorem{lemma}[theorem]{Lemma}
\newtheorem{assumption}[theorem]{Assumption}
\newcommand{\f}[1]{\pmb{#1}}
\DeclareMathOperator{\R}{\mathbb{R}}
\DeclareMathOperator{\C}{\mathcal{C}}
\DeclareMathOperator{\He}{\f{H}^1}
\DeclareMathOperator{\M}{\mathcal{M}}
\DeclareMathOperator{\ra}{\rightarrow}
\DeclareMathOperator{\de}{\text{d}}
\DeclareMathOperator{\cur}{curl}
\DeclareMathOperator{\out}{out}
\DeclareMathOperator{\supp}{supp}
\DeclareMathOperator{\sogn}{sign}
\newcommand{\sign}{\sogn\hspace*{-0.3em}^+\hspace*{-0.2em}}
\DeclareMathOperator{\esssup}{ess\,sup}
\DeclareMathOperator{\essinf}{ess\,inf}
\DeclareMathOperator{\di}{\nabla \cdot}
\newcommand{\ov}[1]{\overline{#1}}
\DeclareMathOperator{\curl}{\nabla \times}
\renewcommand{\t}{\partial_t  }
\newcommand{\un}[1]{\underline{{#1}}}
\newcommand{\n}{\nabla}
\definecolor{lightgray}{RGB}{174,205,238}
\newcommand{\tet}{\tilde{\theta}}
\newcommand{\tz}{\tilde{z}}
\newcommand{\tA}{\tilde{\f A}}
\newcommand{\tu}{\tilde{\f u}}
\begin{document}
\author[D. H\"omberg, R. Lasarzik]{Dietmar H\"omberg\footnote{Technische
  Universit\"at Berlin\\ Institut
  f\"ur Mathematik \\ Str. des 17. Juni 136 \\
  10623 Berlin\\ Germany}\addmark{~\,,}{}\footnote{Department of \\
  Mathematical Sciences\\
  NTNU \\ Alfred Getz vei 1\\ 7491
Trondheim\\ Norway}\addmark{~\,,}{}\footnote{Weierstrass Institute \\
Mohrenstr. 39 \\ 10117 Berlin \\ Germany \\
E-Mail: dietmar.hoemberg@wias-berlin.de\\
\hphantom{E-Mail:} robert.lasarzik@wias-berlin.de},
Robert Lasarzik\addmark{~}{3}}

\title[Weak entropy solutions, existence and weak-strong uniqueness]{Weak entropy solutions to a model in induction hardening, existence and weak-strong uniqueness
}	
\nopreprint{2671}	
\selectlanguage{english}		
\subjclass[2010]{35Q61, 35Q79, 	80A17}	
\keywords{Induction hardening, existence, weak-strong uniqueness, weak-entropy solution}
\thanks{Partially supported by the European Union's Horizon 2020 research and innovation programme under
the Marie Sk\l{}odowska-Curie grant agreement No. 675715 (MIMESIS)}				
\maketitle
\begin{abstract}
In this paper, we investigate a 
model describing induction hardening of steel. The related system consists of an energy balance, an ODE for the different phases of steel, and Maxwell's equations in a potential formulation.
The existence of weak entropy solutions is shown  by a suitable regularization
and discretization technique. Moreover, we prove the weak-strong uniqueness of
these solutions, i.e., that a weak entropy solutions coincides with a classical solution emanating form the same initial data as long as the classical one exists. 
The weak entropy solution concept has advantages in comparison to the previously introduced weak solutions, e.g., it allows to include free energy functions with low regularity properties corresponding to phase transitions.
%
%
\end{abstract}

\setcounter{tocdepth}{2}
\tableofcontents
\section{Introduction and main results}\label{sec:intro}
\textit{Induction hardening} is an energy efficient method for the heat treatment of\textit{ steel parts}. 
The general goal of these surface heat treatments is to adapt the boundary layer of a work piece to its special areas of application in terms of hardness, wear resistance, toughness, and stability. 
To achieve this goal, the boundary region of the steel component  is first
heated and afterwards cooled to influence the micro-structure in such a way that
the boundary layer emerges in the so-called \textit{martensitic} phase. The
resulting phase mixture in the component then exhibits the most desirable
structural properties combining a hard boundary layer to reduce
abrasion with a softer interior to reduce fatigue effects. 

In induction hardening, the heating of the work piece is caused by a periodically varying electromagnetic field. The varying magnetic flux in the inductor coil in turn induces a current in the work piece, which is (due to resistance) partly transformed into eddy current losses resulting in Joule heating.
Heated up to a certain temperature regime, the high-temperature solid steel phase, called \textit{austenite}, emerges. Afterwards, the material is cooled rapidly to transform the austenite into the desired martensite phase.

The \textit{skin effect}  describes the tendency of an alternating electric
current to induce a current density that is largest near the outer surface of
the work piece. In a cylinder, the heating occurs mainly between the surface and
the so-called skin depth, which is mainly determined by the material and the frequencies of the current source. But for a work piece with a more complex geometry, this relation becomes more involved. 

For example, gears are treated with multiple frequencies to achieve a hardening of the tip as well as the root of the gear (see~\cite{elisabetta} or~\cite{schwenk}).  To predict the outcome of the heat treatment, numerical simulations proved as an important tool~\cite{simulation}. 
Therefore and also for controlling the process optimally, a mathematical understanding of the procedure is essential.

To model this interplay of electric fields with high frequencies,  eddy currents, steel phases, and heat conduction, a mathematical model is given relying on an energy balance with Joule heating, Maxwell's equation in a potential formulation, and an ODE for the different  phases evolving in the steel. 
A special difficulty arises since the terms of highest differential order in time and space are nonlinearly coupled in all equations, which poses severe difficulties for the analysis. 
In~\cite{hoemberg},~\cite{elisabetta}, and~\cite{surfacehardening}, this model is investigated in the context of weak solutions (see~\cite{hoemberg}) and stability (see~\cite{elisabetta}). 
 The novelty of the work at hand is the introduction of the concept of weak-entropy solutions into the context of induction hardening. 
In contrast to weak solutions, this concept allows for  physical  more realistic
assumptions (compare the temperature dependence of the heat conductivity and the
electric conductivity to the model considered in~\cite{elisabetta}).  In
comparison to the weak-solution concept, the weak formulation of the energy
balance is replaced by the  energy inequality and the entropy
production rate. This is similar to the solution concept for the Navier--Stokes--Fourier system~\cite{singular} or a system with phase transitions, e.g., in~\cite{Giulio}. In comparison to the weak concept in~\cite{hoemberg}, the weak entropy solution concept allows to show the weak-strong uniqueness of solutions, i.e., that they coincide with a local strong solution emanating from the same initial data as long as the latter exists. In recent publications, it was proved that weak solutions may give rise to nonphysical non-uniqueness (see~\cite{Isett} and~\cite{buckmaster}) such that weak solutions may not be the most favourable solution concept. 

Another advantage of the introduced solution concept is  that the function modeling the latent heating in the energy balance can be chosen less regular compared to the previous definitions of weak solutions. 
The latent heat can be modelled based on the Heaviside function corresponding to the sharp phase transition behaviour one would expect in the evolution of the steel phases.
Additionally, we want to mention that the solutions concept is in line with the physics of the systems. Such a thermodynamical system is derived from the thermodynamical principles and these are the heart of the solution concept. 
Instead of formulating the energy balance in a weak sense, we rather formulate the Clausius-Duhem inequality and the energy conservation in a weak sense, which are the underlying thermodynamical principles. 

As already mentioned, the induction hardening of components with a complex geometry depends on induced alternating currents with different frequencies. These frequencies can become rather high, posing severe problems for the numerical solution, if one tries to solve the system in the time domain, which requires a very fine discretization. Hence, one would be interested in a singular limit  for this system (see~\cite{homo}). 
The relative energy inequality is a suitable tool to proof rigorously~\cite{singular} the convergence  of solutions of the systems to solutions of the singular limit system. We will explore this for the introduced solution concept in a future publication.


The paper is organized as follows,
in the next section (Sec.~\ref{sec:model}) the model is introduced with a special focus on its thermodynamic consistency and the main results of the paper are collected. In Sec.~\ref{sec:ex}, the existence proof is executed. 
We start by stating the regularized discretized system, and show its solvability (see Sec.~\ref{sec:ex}). 
Deriving \textit{a priori} bounds (see Sec.~\ref{sec:firstapri} and Sec.~\ref{sec:apri2}), we are able to go to the limit with the regularization and discretization in a suitable manner (see Sec.~\ref{sec:convfirst} and Sec.~\ref{sec:conv2})  to prove the existence of weak entropy solutions in the end. Afterwards, we comment on the adaptations of the proof under stronger assumptions~\ref{sec:weakss}.
The weak-strong uniqueness is proven in Sec.~\ref{sec:weakstrong}, defining the
relative energy and the relative dissipation for the system, we give some
preliminary results (see Sec.~\ref{sec:relen}), which helps us to prove the
relative energy inequality (see Sec.~\ref{sec:relin}). The dissipative terms are
handled (see Sec.~\ref{sec:diss}) and the remaining terms are estimated
appropriately (see Sec.~\ref{sec:est}) to prove the weak-strong uniqueness in the end.

\subsection{Model\label{sec:model}}

The model for the induction hardening process is based on Maxwell's equations
\begin{align}
\curl \f H = \f J+ \t \f D  \,, \qquad \curl \f E = - \t  \f B \,, \qquad \di \f B = 0\,,\qquad \di \f D = \rho  \,, \label{eq:max}
\end{align}
where $\f E$ is the electric field, $\f B$ the magnetic induction, $\f H$ the magnetic field,  $\f J$ the spacial current density, and $\f D$ the electric displacement field.  
The free  electric charge potential is denoted by $\rho$. 
Then, we assume Ohm's law and a linear relation between the magnetic induction and the magnetic field as well as between the electric and the electric displacement field:
\begin{align}
\f J= \sigma 
 \f E\,, \qquad \f B = \mu_0 
 \f H\,,\qquad \f D = \varepsilon \f E \,, \label{linrel}
\end{align}
where $\sigma$ denotes the electrical conductivity, $\mu $ the magnetic permeability, and $ \varepsilon$ the electric  permittivity. 
As usual for induction phenomena, the magneto quasistatic approximation of Maxwell's equations is considered, i.e., $ \t \f D \equiv0$. 
In view of~\eqref{eq:max}$_3$, a magnetic vector potential $\f A$ can be introduced via 
\begin{align}
\f B = \curl \f A\, , \qquad \di \f A = 0  \quad \text{in }  D\,.\label{introA}
\end{align}
From~\eqref{eq:max}$_2$ and~\eqref{introA}, we observe the existence of a scalar potential~$\phi$ such that 
\begin{align}
\f E+ \t \f A =  \nabla \phi    \,,   \quad\text{and}\quad \f J =-  \sigma
(\t \f A - \nabla \phi)  \quad    \text{in }D  \times (0,T) \,,\label{jouleheat}
\end{align} 
where we used~\eqref{linrel}$_1$.  
Thus, from the relations~\eqref{eq:max}$_1$,~\eqref{linrel}$_2$,~\eqref{introA}$_1$, and defining the source current density $\f J_s$ by $\f J_s:= \sigma (\theta) \nabla \phi$, we obtain the following formulation of Maxwell's equations suitable for our purposes 
\begin{align}
\sigma(\theta ) \t \f A + \curl \left ( \frac{1}{\mu(z) } \curl\f A\right ) = \f J_s \quad\text{in }D \,, \qquad \f n \times \f A = 0 \quad \text{on }\partial D  \,.\label{maxwellerste}
\end{align}

\begin{figure}[h] 
   \centering
   \includegraphics[width=2in]{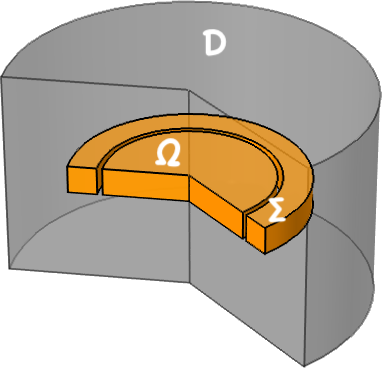} 
   \caption{The setting: hold-all domain $D$, workpiece $\Omega$ and inductor $\Sigma$.}
   \label{fig:setting}
\end{figure}

We use the following setting: we consider the domain $D$, consisting of  the work piece, which is assumed to fill the volume $\Omega$, the inductor called $\Sigma$, and air, which is assumed to fill $D/(\Omega \cap \Sigma )$. 
Maxwell's equation is assumed to be fulfilled in the whole domain $D$ and $\f J_s$ is supported in $\Sigma$, i.e., $\supp(\f J_s) = \Sigma$ (see Figure \ref{fig:setting}).
The properties of the material constants $\sigma$ and $\mu$ are allowed to depend on the respective domain, i.e., 
\begin{align*}
    \sigma(\f x ,\theta ) 
:=     \begin{cases}
\sigma( \theta )  & \text{in }\Omega \,, \\
\sigma_{\text{cond}} & \text{in }\Sigma\,,\\
0 & \text{in } D /(\Omega \cap \Sigma ) \,.
\end{cases}
\qquad \mu ( \f x ,z ) = \begin{cases}
\mu( z) & \text{in }\Omega \,, \\
\mu_{\text{cond}} & \text{in }\Sigma\,,\\
\mu_{\text{a}}  & \text{in } D /(\Omega \cap \Sigma ) \,.
\end{cases}
\end{align*}
Here, we assume that there exists a $\un\sigma>0$ and a $\un\mu>0$ such that $\un\sigma \leq \sigma(\theta) $ for all $\theta\in[0,\infty)$ and $\un \sigma \leq \sigma_{\text{cond}}$ as well as $\un\mu\leq \mu(\f x , z) $ for all $ \f x \in D$ and $z \in \R$. For convenience, we define $\sigma_{\out} = \sigma_{\text{cond}} $ and $\mu_{\out}= \mu_{\text{cond}} $  in $\Omega$ as well as $\sigma_{\out} = 0 $ and $\mu_{\out} = \mu_{a}$ in $D/(\Omega\cap \Sigma)$.
In the sequel we will drop the $x$ - dependency of $\mu$ and $\sigma$ to simplify the exposition.

The energy balance is only considered in $\Omega$. 
%
%
%
%
The evolution of the internal energy is determined by the heat flux and the Joule heating (see~\cite[Sec.~3.8, Eq.~3.8.22]{maugin}):
\begin{align}
\t e +\di \f q = \f J \cdot \f E   \,,\label{energybalance}
\end{align}
where 
$e$ denotes the internal energy of the system and $\f q$ the heat flux, which is  given by  a generalization of  Fourier's law 
\begin{align*}
\f q = - \kappa(\theta , z ) \nabla \theta  \,, \qquad \text{with } \kappa (\theta ,z) \in [\underline{\kappa}, \overline{\kappa}]  \quad \forall (\theta ,z) \in (0,\infty)\times \R\,.
\end{align*}
The Helmholtz free energy $\psi$, with $\psi = \psi( \theta,z)$,  is defined via $\psi = e - \theta s $, where $s$ denotes the entropy . The Clausius--Duhem inequality $ \t s + \di ( \f q / \theta) \geq 0$ can be transformed 
to
\begin{align*}
\theta \left (\t s  + \di \left (\frac{\f q}{\theta } \right )\right ) = {}&\t e + \di \f q - \partial_t \psi - \t \theta s  - \frac{\f q  \cdot \nabla \theta }{\theta }
\\
={}& - ( \psi_\theta + s) \t \theta + \f J \cdot \f E  - \psi_z \t z + \frac{\kappa(\theta ,z) | \nabla \theta |^2 }{\theta }\\
 ={}& - ( \psi_\theta + s) \t \theta  + \sigma(\theta ) | \t \f A |^2  - \psi_z \t z + \frac{\kappa(\theta ,z) | \nabla \theta |^2 }{\theta }\\
\geq{}& 0 \,.
\end{align*}
It is satisfied for all choices of variables, if the standard relation $\psi_\theta + s = 0$ and the phase field equation $\tau(\theta) \t z + \psi_z ( \theta ,z) =0$ are satisfied (see~\cite{elisabetta}). 
By the definition of the Helmholtz free energy and using the phase equation, we find
\begin{align*}
\t e  =  \partial_t \psi - \t \theta  \psi_\theta - \theta  \partial_t \psi_\theta = \psi _ \theta \t \theta  + \psi_ z \t z   - \t \theta  \psi_\theta - \theta  \partial_t  \psi_\theta 
= - \theta  \partial_t \psi_\theta  - \tau(\theta ) (\t z)^2 \,.
\end{align*}
Inserting this back into~\eqref{energybalance} and using similarly the expressions~\eqref{jouleheat} and Fourier's law, we end up with equation~\eqref{temp}.
All in all, we observe that the equations of motion are given by
\begin{subequations}\label{eq:strong}
\begin{align}
- \theta \t \psi_\theta  - \di ( \kappa ( \theta , z ) \nabla \theta ) ={}& \sigma ( \theta ) | \t \f A   |^2 + \tau (\theta ) | \t z|^2\quad\text{in }\Omega \,,\label{temp}\\
\sigma (\theta )  \t \f A  + \curl \frac{1}{\mu(z) } \curl \f A  ={}& \f J_s \quad \text{in }D\,,\label{magnetic}\\
\tau(\theta) \t z + \psi_z(\theta , z )  = {}& 0 \quad \text{in }\Omega \,\label{inner}
\end{align}
equipped with the boundary conditions 
\begin{align}
\f n \cdot \kappa(\theta ) \nabla \theta = 0 \quad \text{on }\partial\Omega \,, \qquad \f n \times \f A = 0 \quad \text{on } \partial D \label{boundarycond}
\end{align}
and initial conditions
\begin{align}
\theta ( 0) = \theta_0\quad \text{in }\Omega \,,\qquad \f A (0) = \f A_0 \quad \text{in }D  \,,\qquad z (0) = z_0 \quad \text{in }\Omega \,.\label{initialcond}
\end{align}
\end{subequations}
\begin{remark}
An example for a typical free energy $ \psi$ is given by
\begin{align}
\psi(\theta ,z) =- \theta (\log \theta -1) + ( z_{\text{eq}}(\theta ) - z)^2_+\label{example}
\,,
\end{align}
where $ (x)_+$ is defined via $ ( x) _+ : = \max\{ 0 , x\} $ and $ z_{\text{eq}}\in \C^{1,1}\cong W^{2,\infty}$ with some appropriate growth conditions as $\theta\ra \infty$.    Note that $\psi $ is not twice continuously differentiable, but only twice weakly differentiable. 
The choice of such an irregular function corresponds to the fact that the metal is subjected to a kind of Hysteresis effect when it changes the phase, it should not immediately change back even, if the temperature changes back again. This is modeled by the nonsmooth function $ (\cdot )_+$. 
\end{remark}

\subsection{Hypothesis and preliminaries}
We assume the domain $\Omega $ to be of class $\C^2$ and simply connected. This can be  generalized in the limit, we exclusively use the regularity in the approximate setting. 
We use standard definitions of Lebesgue and Sobolev spaces. Additionally, we define the space 
\begin{align*}
\f H({\cur},D) := \{ \f u \in L^2(D )^3 : \curl \f u \in L^2 (D  )^3 \} \,,
\end{align*}
which is equipped with the graph-norm. Additionally, we define
 \begin{align*}
 \f H_{\cur} := \{ \f u \in L^2(D )^3 : \curl \f u \in L^2 (D  )^3 \,, \quad \di \f u =0 \text { in }D   \,, \quad \f n \times \f  u  = 0 \text{ on }\partial D    \}\,,
 \end{align*}
equipped with the norm $ \| \curl \cdot \|_{ L^2}$, where the operators and the traces have to be interpreted in a weak sense (see~\cite{trace} for the definition of the suitable trace operators on Lipschitz domains).  
On $\f H_{\cur}$, this norm is equivalent to the usual $\f H^1(\Omega)^3$-norm (see for instance~\cite{amrouche} for the associated estimates or~\cite{wahl} for the connection of the associated estimate to the topological properties (Betti number) of the domain).

We collect the different assumptions for the different results of the paper:
\begin{assumption}\label{Asume:1}
The free energy $\psi $ is a function given by 
$\psi : [0,\infty) \times \R \ra \R$. 
It should fulfill, $ \psi \in \C([0,\infty)\times \R ; \R) $,   $ \psi \in \C^{1,1}((0,\infty)\times \R;\R) $, and $ \psi (0,z) \geq 0$ for all $z\in\R$. 
Additionally, we assume physical convenient conditions on $\psi$, i.e.,
\begin{align}
 | \psi _{z}|\leq C \, , \quad  | \psi _{zz} |\leq C \, , \quad   
  c \leq - \theta \psi_{\theta\theta } \leq C \, , \quad  |(1+\theta) \psi_{z\theta}| \leq C \quad \text{for all }\theta,\,z \in [0,\infty)\times \R \,.\label{heatcapacity}
\end{align}
For the head conduction, we assume that $\kappa$ depends on both variables multiplicative, meaning that there exist two functions $ \tilde{\kappa}$, $\bar{\kappa}: \R \ra [0,\infty) $ such that
$\kappa(\theta , z) = \tilde{ \kappa}(\theta) \bar{\kappa}(z)$.

The functions $\tilde{\kappa} $, $\bar{\kappa}$, $\tau$,  $\mu$, $ \tau'$,and $\partial_{z}\mu$ are bounded from below and above and continuous in $\theta $ and $z$, respectively, i.e., 
\begin{align*}
\tilde{\kappa}\,, \bar{\kappa }\in \C(\R;[0,\infty) \text { and } \mu,\, \tau  \in\C^1(\R;\R)  \text{ with } 0 < c \leq \tilde\kappa\, \bar{\kappa}\,, 
\tau\, , \mu \leq C \text { and }|\partial_z \mu(z)|+| \theta  \tau'(\theta ) |
 \leq c    \,
\end{align*}
for all $(\theta ,z) \in [0,\infty)\times \R$.  For $\sigma$, we assume that $\sigma(\f x , \theta) \geq c $ for all $\f x \in \Omega \cap \Sigma$ and $\theta \in [0,\infty)$. 


\end{assumption}
\begin{remark}
The term $ - \theta \psi_{\theta \theta}$ is often referred to as heat-capacity and the assumptions on its boundedness from below and above are rather standard (compare with~\cite{elisabetta} or~\cite{tomassetti}). 
Note that since $\psi \in \C^{1,1}((0,\infty )\times \R; \R )$ it is twice weakly differentiable with essentially bounded second derivative, i.e., $ \psi \in W^{2,\infty}((0,\infty)\times \R; \R)$. 
\end{remark}
The free energy $\psi $ is a function given by 
$\psi : (0,\infty) \times \R \ra \R$. 
Under the next set of assumptions, the weak-strong uniqueness of solutions is proved. 
\begin{assumption}\label{Assume:2}
Let Assumptions~\ref{Asume:1} be fulfilled. Additionally, the free energy function should fulfill several standard assumptions: 
it should be 
concave in $\theta$ (see~\eqref{heatcapacity}) and 
convex in $z$ such that there exists a $c>0$ with
\begin{align}
  \psi _{zz}\geq c >  0  \label{phi:concave}
\end{align}
for all $ \theta , z \in (0,\infty) \times \R$. 
Furthermore, 
$ \psi \in \C^3((0,\infty)\times \R; \R)$ with bounded third derivatives. 
For the function~$\mu$, denoting the magnetic permeability, we assume 
 \begin{align}
 \mu \in \C^{2}(\R) \,, \quad  0 \leq-( \mu^2(y))'' \leq  C_{\mu}
 \text{ for all }y\in\R .\label{condmu}
 \end{align}
 The functions $\sigma$, $\tau$ and $\kappa$ are additionally assumed to be continuously differentiable, i.e., $ \kappa \in \C^1((0,\infty)\times \R;\R)$ and $\sigma$, $ \tau\in \C^1((0,\infty); \R)$ and this derivative should vanish sufficiently fast at infinity, i.e.,
 \begin{align}
\left (  | \nabla_{(\theta ,z)} \kappa(\theta ,z) | + | \partial_\theta \sigma(\theta ) | + | \tau'(\theta)| \right ) (\theta^3+  \theta +1) \leq c \, \label{boundderivative}
 \end{align}
for all $ \theta , z \in (0,\infty) \times \R$.
\end{assumption}
 The assumption of concavity of $\mu^2$ is stronger than the concavity of $\mu$ itself. Indeed for $\mu>0$, we observe that  in the equality $(1/2)( \mu^2)'' =\mu''\mu + ( \mu')^2 $ the second term on the right-hand side is positive.  
 This concavity assumption is not optimal, but some assumption stronger than concavity of $\mu$ is of need for our technique. We settled for this special assumption due to its readability.

%

\subsection{Weak entropy solutions and main results}
\begin{definition}\label{def:weak}
\begin{subequations}
The functions $( \theta , z,\f A)    $ are called weak entropy solution  to~\eqref{eq:strong}, if they fulfill
\begin{align}
\begin{split}
\theta \in{} &  L^\infty(0,T; L^1(\Omega)) \text{ with } \theta \geq 0 \text{ a.e. in }\Omega \times (0,T)\text{ and } \\\log \theta \in{}& L^2(0,T; W^{1,2}(\Omega)) \cap L^\infty(0,T; L^1(\Omega))\,,\\
z \in {}& W^{1,\infty}(0,T; L^\infty(\Omega)) \cap L^2(0,T;W^{1,2}(\Omega))\,, \\
\f A \in {}& L^\infty(0,T; \f H_{\cur}) \cap W^{1,2} ( 0,T; L^2(\Omega\cup \Sigma )) 
\end{split}\label{regweak}
\end{align}
and the  entropy inequality (or entropy production rate)
\begin{multline}
\int_\Omega  \psi_\theta( \theta(t) , z(t) )   \vartheta  (t) \de x + \int_0^t \int_\Omega \vartheta \left ( \kappa (\theta ,z) {|\nabla \log \theta |^2} + \sigma (\theta ) \frac{|\t \f A |^2}{\theta }+ \tau(\theta ) \frac{|\t z|^2}{\theta } \right ) \de x \de s  \\
- {} \int_0^t \int_\Omega \kappa(\theta ,z) {\nabla \log \theta } \cdot \nabla \vartheta \de x \de s \leq \int_\Omega \psi_\theta ( \theta_0 ,z_0 ) \vartheta
(0)  \de \f x+  \int_0^t \int_\Omega \psi_\theta (\theta ,z) \t \vartheta \de x \de s \,
\label{entropy}
\end{multline}
for all $\vartheta \in \C([0,T]; L^\infty(\Omega)) \cap L^2 (0,T ; \He(\Omega)) \cap W^{1,1}(0,T ; L^\infty(\Omega))$ with $\vartheta \geq 0$ a.e. in $\Omega \times(0,T)$ and for almost every $t\in(0,T)$,
the weak formulation for the inner variables
\begin{align}
\int_0^T \int_D ( \sigma( \theta ) \t \f A  , \varphi ) + \left ( \frac{1}{\mu(z)} \curl \f A , \curl \varphi\right ) \de x \de t = \int_0^T \int_{\Sigma}  \f J_s \cdot \varphi \de \f x \de t   \,,\label{weakA}
\end{align}
\begin{align}
 \tau(\theta ) \t z + \psi _z ( \theta , z  )= 0 \quad
  \text{ a.e. in } \Omega \times (0,T)   \label{weakz}
\end{align}
for all $ \varphi \in L^2( 0,T; \f H _{\cur})$,
and the energy inequality 
\begin{multline}
 \left .\int_\Omega e(\theta(s) ,z(s))   \de \f x \right |_0^t+ \left .\int_D \frac{1}{2\mu(z) } | \curl \f A |^2   \de \f x\right |_0^t + \int_0^t \int_\Omega  \frac{\mu'(z) \t z }{2\mu^2(z)} | \curl \f A |^2 \de \f x \de s \\+ \int_0^t \int_{D/\Omega} \sigma _ {\out} | \t \f A |^2 \de \f x \de s\leq {}\int_0^t \int_\Sigma \f J_s \cdot \t \f A \de \f x \de s \,
\label{energyequality}
\end{multline}
for almost every $t\in(0,T)$, where $ e $ is given by $ e = \psi - \theta \psi_\theta$.
The initial values in the equations~\eqref{weakA} and~\eqref{weakz} are attained $ \f A(0) = \f A_0 $ in $ \f H_{\cur}$ and $ z(0)= z_0$ in $ L^\infty$.  
\end{subequations}
\end{definition}
\begin{remark}
The initial values are attained in a weak sense, i.e., from the regularity assumption we infer $ z \in \C([0,T]; L^\infty) $ and $\f A   \in \C_w ([0,T]; \f H_{\cur} )$.
\end{remark}
\begin{theorem}\label{thm:weakin}
Let $\Omega $ be of class $\C^2$ and let Assumption~\ref{Asume:1} be fulfilled.  To every $ \theta _0 \in L^1( \Omega) $ with $ \essinf_{\f x \in \Omega} \theta_0   > 0$ and $ \log \theta _0\in L^1(\Omega)$, $ \f A_0 \in \f H_{\cur} $, $z_0\in L^\infty (\Omega)$, and $ \f J_s \in L^2(0,T;L^2(\Sigma )) $,
 there exists a global weak solution to~\eqref{eq:strong} in the sense of Definition~\ref{def:weak}. 
 
\end{theorem}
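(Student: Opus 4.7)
The plan is to construct a weak entropy solution as the limit of a doubly regularised scheme. I would combine a Galerkin ansatz for the magnetic potential $\f A$ with a Rothe-type time discretisation (step size $1/n$), together with a regularisation parameter $\delta>0$ that replaces $1/\theta$ by $1/(\theta+\delta)$ and truncates the free energy $\psi$ away from the singular set $\{\theta=0\}$. Existence of a solution on each time slab would follow from a Schauder/Brouwer fixed-point argument, exploiting Lipschitz regularity of the coefficients on finite-dimensional subspaces together with the lower bounds provided by Assumption~\ref{Asume:1}. Two consecutive limits are then performed as suggested by the structure of Sec.~\ref{sec:ex}: first $n\to\infty$, then $\delta\to 0$.

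A priori bounds are derived in two layers. Testing~\eqref{magnetic} with $\t\f A_n$ and using positivity and boundedness of $\sigma$ and $\mu$ yields uniform control of $\f A_n$ in $L^\infty(0,T;\f H_{\cur})$ and of $\t\f A_n$ in $L^2(0,T;L^2(D))$. From~\eqref{inner}, the bound $|\psi_z|\leq C$ combined with the lower bound on $\tau$ gives $z_n\in W^{1,\infty}(0,T;L^\infty(\Omega))$ uniformly. Integrating~\eqref{temp} in space and time and using $\psi(0,\cdot)\geq 0$ produces an $L^\infty(0,T;L^1(\Omega))$ bound on $\theta_n$. The decisive entropy estimate follows from multiplying~\eqref{temp} by $1/(\theta_n+\delta)$ and using the identity
\[
\frac{\di(\kappa\nabla\theta)}{\theta} \;=\; \di\bigl(\kappa\nabla\log\theta\bigr) + \kappa|\nabla\log\theta|^2 ,
\]
which delivers the $L^2(0,T;W^{1,2}(\Omega))$ bound on $\log\theta_n$ together with a bound on the entropy production rate.

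For compactness, Aubin-Lions yields strong convergence of $\f A_n$ in $L^2(0,T;L^2(D))$; the $W^{1,\infty}(0,T;L^\infty)$ bound combined with the pointwise ODE~\eqref{inner} gives strong convergence of $z_n$ in $C([0,T];L^p(\Omega))$ for every finite $p$; and pointwise a.e. convergence of $\theta_n$ is obtained by a nonlinear Aubin-Lions argument applied to $\log\theta_n$, combining the spatial $W^{1,2}$-bound with a time-derivative bound on $e(\theta_n,z_n)$ in a negative Sobolev norm, extracted from~\eqref{temp} tested against smooth functions. With these convergences the nonlinear coefficients $\sigma(\theta_n)$, $\mu(z_n)$, $\tau(\theta_n)$, $\psi_z(\theta_n,z_n)$ and $\psi_\theta(\theta_n,z_n)$ pass to the limit by dominated convergence, so the weak formulations~\eqref{weakA} and~\eqref{weakz} are inherited. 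The entropy inequality~\eqref{entropy} is preserved by taking $\liminf_{n\to\infty}$: convexity of $\xi\mapsto|\xi|^2$ combined with a.e. convergence of the non-negative multipliers $\sigma(\theta_n)\vartheta$, $\tau(\theta_n)\vartheta$, $\kappa(\theta_n,z_n)\vartheta$ furnishes weak lower semicontinuity of the dissipation side, and the energy inequality~\eqref{energyequality} follows analogously from the Maxwell identity via Fatou's lemma. The second passage $\delta\to 0$ repeats the same scheme.

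The principal obstacle is that the three quadratic dissipation densities $\sigma(\theta_n)|\t\f A_n|^2$, $\tau(\theta_n)|\t z_n|^2$ and $\kappa(\theta_n,z_n)|\nabla\log\theta_n|^2$ are controlled only in $L^1$ and are not continuous under weak convergence; this is exactly what forces the energy balance to degenerate into the pair of inequalities~\eqref{entropy}–\eqref{energyequality} and is the raison d'\^etre of the weak entropy framework. A subsidiary difficulty is the a.e. convergence of $\theta_n$ itself: since no direct compactness for $\theta_n$ is available, one must combine the $\log\theta_n$-bound with a bound on $\t e(\theta_n,z_n)$ to invoke a nonlinear Aubin-Lions argument, and only this step is genuinely technical in the present setting.
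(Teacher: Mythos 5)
Your overall strategy --- Galerkin discretisation of the magnetic potential, regularisation of the temperature and phase equations, a Schauder fixed point, energy/entropy \textit{a priori} estimates, compactness, and weak lower semicontinuity --- coincides with the paper's, and most individual steps are sound. The decisive gap is the passage to the limit in the energy inequality~\eqref{energyequality}: it contains the term $\int_0^t\int_\Omega \tfrac{\mu'(z)\t z}{2\mu^2(z)}|\curl \f A|^2\,\de\f x\,\de s$, which has \emph{no sign}, so Fatou's lemma and weak lower semicontinuity of the convex terms do not suffice. One must upgrade the weak convergence of $\curl \f A_n$ to strong convergence in $L^2(0,T;\f L^2(D))$, which the paper obtains by a Minty-type argument, testing the difference of the Galerkin and limit Maxwell equations with $\f A_n-P_n\f A$ and absorbing the quadratic term via the lower bound on $1/\mu$. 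This step is unavoidable whenever $\partial_z\mu\not\equiv 0$ and is absent from your proposal. A second, smaller error: testing~\eqref{magnetic} with $\t\f A_n$ does \emph{not} give a uniform bound on $\t\f A_n$ in $L^2(0,T;L^2(D))$, because $\sigma\equiv 0$ in the air region $D\setminus(\Omega\cup\Sigma)$; only $L^2(0,T;L^2(\Omega\cup\Sigma))$ is available (hence the regularity required in~\eqref{regweak}), and the paper compensates at the approximate level by setting $\sigma_\varepsilon=\varepsilon$ outside the conductors.

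The remaining issue concerns positivity of the temperature. The entropy estimate, and indeed the appearance of $\log\theta$ in Definition~\ref{def:weak}, require $\theta>0$; replacing $1/\theta$ by $1/(\theta+\delta)$ avoids the division but breaks the exact identity relating $\t e/\theta$ to $\t\psi_\theta$ on which the entropy balance rests, and it leaves open why the limit temperature is positive almost everywhere. The paper's mechanism is the singular source $\varepsilon/\theta^2$ added to~\eqref{tempreg} combined with a sub-/supersolution comparison principle (Lemma~\ref{lem:comp}, Proposition~\ref{prop:t}), which produces a strictly positive lower barrier at the approximate level, so that $-1/\theta$ is an admissible test function \emph{exactly}; positivity in the limit then follows from the uniform $L^\infty(0,T;L^1(\Omega))$ bound on $\log\theta$. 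Some device of this kind must be built into your scheme. Finally, note that the paper's $\delta\Delta z$ regularisation of~\eqref{inner} is not decorative: it supplies the spatial regularity of $z$ needed to treat the $z$-dependent coefficient $\kappa(\theta,z)$ in the quasilinear parabolic theory for~\eqref{tempreg}; keeping the phase equation as a pointwise ODE, as you do, leaves the regularity of that coefficient unexplained.
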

\begin{remark}\label{rem:add}
If we assume in addition that $ \psi _{\theta \theta z} $ exists and is bounded by
\begin{align}
 |\theta ^s \psi_{\theta,\theta , z}| \leq c  \text { for } s > 1/3 \,, \label{psithetas}
\end{align}
then we infer in addition that  
\begin{align}
\theta \in{} L^q (0,T; W^{1,q}( \Omega))\cap L^p(0,T; L^p(\Omega)) \label{weakeqreq}
\end{align}
for $ q \in [1,4/3)$ and $p\in [1,5/3)$.

\end{remark}

%
%
%
%

\begin{remark}
In the case, that Assumption~\ref{Asume:1} as well as~\eqref{psithetas} holds and additionally $\mu $ is constant, i.e., $ \partial _z \mu \equiv 0$, we could even show that the weak formulation of the energy balance, 
\begin{align}
-\int_0^T \int_\Omega e( \theta ,z)    \t \zeta \de \f x \de t  + \int_0^T \int_\Omega \kappa (\theta ,z) \nabla \theta \cdot \nabla \zeta \de \f x \de t = \int_0^T \int_\Omega \sigma (\theta) | \t \f A |^2 \zeta \de \f x \de t \,,\label{weakenergybalance}
\end{align}
holds for all $ \zeta \in W^{1,5/2}( 0,T; L^{5/2}(\Omega)) \cap L^4(0,T;W^{1,4}(\Omega))\cap L^\infty (0,T; L^\infty(\Omega))$ (compare to~\cite{hoemberg}).
\end{remark}
%

\begin{definition}\label{def:regular}
The functions $( \theta , z,\f A)    $ are called regular solution to~\eqref{eq:strong}, if they are a weak entropy solution with respect to Definition~\ref{def:weak} and additionally fulfill the entropy inequality~\eqref{entropy} as an equality and admits the regularity
\begin{align}
\begin{split}
\log \theta \in{}& W^{1,1}(0,T; \C(\ov \Omega)) \cap L^2(0,T; \C^{1}(\ov\Omega)) \text{ with } \di ( \kappa ( \theta , z) \nabla \log \theta ) \in L^1(0,T; \C(\ov\Omega))\text{ and }\\ \theta(\f x ,t) \geq{}& 0 \text{ a.e.~in } \Omega \times (0,T)\\
\f A \in{}& \C  ([0,T];C^{1}(\ov \Omega)) \cap W^{1,1}(0,T;\C^1 (\ov\Omega))\,,.
\end{split}\label{regregular}
\end{align}
\end{definition}
\begin{theorem}\label{thm:weakstrong}
Let  
Assumption~\ref{Assume:2} be fulfilled. If there exists a regular solution $(\tet,\tz,\tA)$ according to Definition~\ref{def:regular} and a weak entropy solution $(\theta,z,
\f A)$ according to Definition~\ref{def:weak} emanating form the same initial data, then it holds
\begin{align*}
\theta =\tet\,, \quad 
z =\tz\,, \quad \f A = \tA\,.
\end{align*}
\end{theorem}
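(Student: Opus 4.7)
The approach is the relative energy method tailored to the thermodynamic structure of~\eqref{eq:strong}. I would first introduce a relative energy functional combining a Bregman-type distance associated with the shifted free energy $\psi + (\tet-\theta)\psi_\theta$ (the natural functional in this entropy-based setting) and the magnetic energy difference, for instance
\begin{align*}
\mathcal{R}(t) = \int_\Omega \bigl[\psi(\theta,z) - \psi(\tet,\tz) + (\tet-\theta)\psi_\theta(\theta,z) - \psi_z(\tet,\tz)(z-\tz)\bigr]\de \f x + \frac{1}{2}\int_D \frac{|\curl(\f A-\tA)|^2}{\mu(\tz)}\de \f x,
\end{align*}
paired with a relative dissipation $\mathcal{W}(t)$ encoding the entropy-production terms together with $\int_{D/\Omega}\sigma_{\out}|\t(\f A-\tA)|^2\de \f x$. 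Assumption~\ref{Assume:2} enters crucially here: the convexity $\psi_{zz}\geq c$ from~\eqref{phi:concave} and the heat-capacity bound $c\leq -\theta\psi_{\theta\theta}$ from~\eqref{heatcapacity} ensure that $\mathcal{R}$ is nonnegative and controls $\|\theta-\tet\|^2$ and $\|z-\tz\|^2$, while the equivalence of $\|\curl\cdot\|_{\Le}$ with the $\He$-norm on $\f H_{\cur}$ together with the uniform lower bound $\un\mu\leq\mu$ controls the full $\f A$-difference.

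Next, I would derive the relative energy inequality. This combines four ingredients. (i)~The weak energy inequality~\eqref{energyequality} minus the corresponding equality for the regular solution, which thanks to the regularity~\eqref{regregular} can be obtained by testing~\eqref{magnetic} with $\t\tA$ and using~\eqref{temp} directly. (ii)~The weak entropy inequality~\eqref{entropy} with the admissible choice $\vartheta=\tet\in\C([0,T];L^\infty)\cap L^2(0,T;\He)\cap W^{1,1}(0,T;L^\infty)$, minus the entropy equality that the regular solution satisfies by Definition~\ref{def:regular}. (iii)~The weak Maxwell formulation~\eqref{weakA} tested with $\tA$ and the strong form of~\eqref{magnetic} for $\tA$ tested with $\f A$. (iv)~The two $z$-equations combined against $\psi_z(\tet,\tz)-\psi_z(\theta,z)$. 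After careful algebraic rearrangement, the leading linear contributions cancel and one arrives at an inequality of the form
\begin{align*}
\mathcal{R}(t)+\int_0^t\mathcal{W}(s)\de s \leq \int_0^t \mathcal{K}(s)\,\mathcal{R}(s)\de s
\end{align*}
with a function $\mathcal{K}\in L^1(0,T)$ depending only on the norms of the regular solution.

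The principal difficulty lies in handling the nonlinearly coupled remainders. The magnetic contribution $\tfrac{1}{2\mu(z)}|\curl\f A|^2-\tfrac{1}{2\mu(\tz)}|\curl\tA|^2$ must be expanded around $(\tz,\tA)$; the assumption $(\mu^2)''\leq 0$ from~\eqref{condmu} is precisely what gives the residual term in $z-\tz$ the correct sign so that it can be absorbed into $\mathcal{R}$. The Joule term $\sigma(\theta)|\t\f A|^2-\sigma(\tet)|\t\tA|^2$ splits into a quadratic piece $\sigma(\tet)|\t(\f A-\tA)|^2$ that joins $\mathcal{W}$ and a remainder proportional to $(\theta-\tet)|\t\tA|^2$, whose $L^\infty$-bound on $\t\tA$ coming from~\eqref{regregular} together with the decay condition~\eqref{boundderivative} yields the Gronwall form $\leq\mathcal{K}\mathcal{R}$. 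The heat-flux, phase-coupling, and relaxation terms are treated analogously, using $\kappa,\tau,\sigma\in\C^1$ and the derivative decay in~\eqref{boundderivative} to keep all nonlinear remainders under control.

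Finally, since the initial data of the two solutions coincide, $\mathcal{R}(0)=0$, and Gronwall's lemma yields $\mathcal{R}(t)\equiv 0$ on $[0,T]$. Combined with the coercivity properties established at the outset this forces $\theta=\tet$, $z=\tz$, and $\curl(\f A-\tA)=0$; using the boundary condition $\f n\times(\f A-\tA)=0$ and the divergence-free constraint built into $\f H_{\cur}$ then completes the identification $\f A=\tA$. The delicate point throughout is the careful bookkeeping needed to ensure that the entropy-production, energy, and Maxwell weak/strong asymmetries combine with consistent signs so that the non-sign-definite remainders are genuinely of lower order with respect to $\mathcal{R}$; everything else then follows by the standard Gronwall argument.
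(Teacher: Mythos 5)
Your strategy coincides with the paper's: a relative energy of Bregman type, a relative dissipation built from the entropy production, the combination of the weak energy inequality, the entropy inequality with the admissible test function $\vartheta=\tet$, the two Maxwell formulations and the two phase equations, and a Gronwall argument. The thermal part of your functional is in fact identical to the paper's~\eqref{relativeen}. Two points, however, would fail if executed literally as written.

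First, the coercivity you claim --- that $\mathcal{R}$ controls $\|\theta-\tet\|^2$ --- is false and cannot hold: $\theta$ lies only in $L^\infty(0,T;L^1(\Omega))$, and what the heat-capacity bound $c\le-\theta\psi_{\theta\theta}$ actually yields (Proposition~\ref{prop:pos}) is control of the Bregman distance of the exponential, $\int_\Omega\bigl(\theta-\tet-\tet(\log\theta-\log\tet)\bigr)\de\f x$, which grows only \emph{linearly} in $\theta$. All remainder estimates must be closed against this weaker quantity; this is precisely the role of Lemma~\ref{lem:est}, which bounds $|f(\theta)-f(\tet)|^2$ by $c\,\tet^{-1}(\theta-\tet-\tet(\log\theta-\log\tet))$ using the decay~\eqref{boundderivative}, and of the Legendre--Fenchel argument of Lemma~\ref{lem:fenchel} for products $(f(z)-f(\tz))(\theta-\tet)$. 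Running the absorption against $|\theta-\tet|^2$ would leave terms such as $(\sigma(\theta)-\sigma(\tet))^2|\t\tA|^2/\tet$ or $(\theta-\tet)\,\di(\kappa(\tet,\tz)\nabla\log\tet)$ uncontrolled for large $\theta$. (The final conclusion $\theta=\tet$ is unaffected, since the vanishing of the Bregman distance still forces it, but the intermediate estimates require the correct, weaker coercivity.)

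Second, your magnetic part $\tfrac12\int_D|\curl(\f A-\tA)|^2/\mu(\tz)\de\f x$ is not the functional the energy/Maxwell bookkeeping produces; it differs from the paper's by the non-sign-definite quantity $\tfrac12\bigl(\mu(z)^{-1}-\mu(\tz)^{-1}\bigr)|\curl\f A|^2+\tfrac{\mu'(\tz)}{2\mu(\tz)^2}|\curl\tA|^2(z-\tz)$, which involves $|\curl\f A|^2\,(z-\tz)$ and cannot be absorbed, since no $L^\infty$ bound on $\curl\f A$ is available. The correct choice is the full Bregman divergence of the \emph{jointly} convex map $g(z,\f B)=|\f B|^2/(2\mu(z))$, i.e.\ keeping $\mu(z)$ (not $\mu(\tz)$) in the leading term and including the cross terms $-\mu(\tz)^{-1}\curl\tA\cdot(\curl\f A-\curl\tA)+\tfrac{\mu'(\tz)}{2\mu(\tz)^2}|\curl\tA|^2(z-\tz)$; only then does the hypothesis $-(\mu^2)''\ge0$ of~\eqref{condmu} --- which is exactly joint convexity of $g$, see~\eqref{second} --- deliver both nonnegativity and the lower bound by $\|\curl(\f A-\tA)\|_{L^2}^2$. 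You gesture at this when you say the residual in $z-\tz$ is ``absorbed into $\mathcal{R}$,'' but that residual must be part of $\mathcal{R}$ from the outset rather than estimated against it afterwards. With these two corrections your outline matches the paper's proof.
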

\begin{remark}
The standard example~\eqref{example} fulfills the Assumption~\ref{Asume:1} such that the solution concept due to Definition~\ref{def:weak} seems to be appropriate for this relevant functional. 
Additionally, we observe that the weak-strong uniqueness result does not hold for the example~\eqref{example}. It only holds, if the material undergoes no phase transition. This seems to be reasonable since phase transitions modeled by Hysteresis are known to induce bifurcations (see for example~\cite{bifur}). 
\end{remark}

\subsection{Preliminaries}
In this section, we present  the energy equality and bounds of the inner energy and the entropy of the system. 
\subsubsection{Energy equality}
The energy inequality corresponds to the physical principle of energy conservation and is an important feature of the system providing \textit{a priori} estimates.
In the following, we proof it formally.

\begin{lemma}
Let $(\theta , z,\f A)$ a sufficiently smooth solution to~\eqref{eq:strong}. Then the energy inequality holds
\begin{multline*}
 \left .\int_\Omega e(\theta(s) ,z(s))   \de \f x \right |_0^t+ \left .\int_D \frac{1}{2\mu(z) } | \curl \f A |^2   \de \f x\right |_0^t + \int_0^t \int_\Omega  \frac{\mu'(z) \t z }{2\mu^2(z)} | \curl \f A |^2 \de \f x \de t \\+ \int_0^t \int_{D/\Omega} \sigma _ {\out} | \t \f A |^2 \de \f x \de s= {}\int_0^t \int_\Sigma \f J_s \cdot \t \f A \de \f x \de s \,
\end{multline*}
for all $t\in [0,T]$, where $ e $ is given by $ e = \psi - \theta \psi_\theta$. 
\end{lemma}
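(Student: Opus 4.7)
The plan is to combine the temperature equation with the Maxwell equation tested against $\t \f A$, after first rewriting the temperature equation in internal energy form. The phase equation $\tau(\theta)\t z + \psi_z(\theta,z) = 0$ will make the $\tau(\theta)|\t z|^2$ terms cancel, leaving the conservative balance $\t e - \di(\kappa \nabla \theta) = \sigma(\theta) |\t \f A|^2$ in $\Omega$.

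Step one: rewrite $\t e = \t(\psi - \theta \psi_\theta)$. Using the chain rule and the phase equation,
\begin{align*}
\t e = \psi_\theta \t \theta + \psi_z \t z - \t\theta \psi_\theta - \theta \t \psi_\theta = -\tau(\theta) |\t z|^2 - \theta \t \psi_\theta.
\end{align*}
Substituting this into equation~\eqref{temp} eliminates the $\tau(\theta)|\t z|^2$ term, so
\begin{align*}
\t e - \di(\kappa(\theta,z)\nabla \theta) = \sigma(\theta) |\t \f A|^2 \qquad\text{in } \Omega.
\end{align*}
Integrating over $\Omega$, the heat-flux term vanishes by the Neumann boundary condition~\eqref{boundarycond}$_1$, giving
\begin{align*}
\frac{d}{dt}\int_\Omega e(\theta,z) \de \f x = \int_\Omega \sigma(\theta) |\t \f A|^2 \de \f x.
\end{align*}

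Step two: test the Maxwell equation~\eqref{magnetic} with $\t \f A$ and integrate over $D$. Integration by parts on the curl term is permitted because $\f n\times\f A=0$ on $\partial D$, so
\begin{align*}
\int_D \curl\!\Bigl(\frac{1}{\mu(z)}\curl \f A\Bigr)\cdot \t \f A \de \f x = \int_D \frac{1}{\mu(z)}\curl \f A \cdot \t(\curl \f A)\de \f x.
\end{align*}
Using the identity $\t\bigl(\tfrac{1}{2\mu(z)}|\curl \f A|^2\bigr) = \tfrac{1}{\mu(z)}\curl \f A \cdot \t\curl \f A - \tfrac{\mu'(z)\t z}{2\mu^2(z)}|\curl \f A|^2$ and recalling that $z$ (hence $\t z$) is supported only in $\Omega$, this becomes
\begin{align*}
\frac{d}{dt}\int_D \frac{1}{2\mu(z)}|\curl \f A|^2 \de \f x + \int_\Omega \frac{\mu'(z)\t z}{2\mu^2(z)}|\curl \f A|^2 \de \f x.
\end{align*}
Thus the tested Maxwell equation reads
\begin{align*}
\int_D \sigma(\theta) |\t \f A|^2 \de \f x + \frac{d}{dt}\int_D \frac{1}{2\mu(z)}|\curl \f A|^2 \de \f x + \int_\Omega \frac{\mu'(z)\t z}{2\mu^2(z)}|\curl \f A|^2 \de \f x = \int_\Sigma \f J_s \cdot \t \f A \de \f x,
\end{align*}
where we used $\supp(\f J_s) = \Sigma$.

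Step three: split the dissipation according to the piecewise definition of $\sigma$:
\begin{align*}
\int_D \sigma(\theta) |\t \f A|^2 \de \f x = \int_\Omega \sigma(\theta) |\t \f A|^2 \de \f x + \int_{D/\Omega} \sigma_{\out} |\t \f A|^2 \de \f x.
\end{align*}
Inserting the internal-energy identity from step one cancels $\int_\Omega \sigma(\theta)|\t \f A|^2\de \f x$ against $\frac{d}{dt}\int_\Omega e \de \f x$, producing exactly the claimed identity after integration from $0$ to $t$.

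The computation is essentially bookkeeping for a smooth solution; the main subtlety is ensuring that the curl integration by parts is justified globally on $D$ (which follows from $\f n\times \f A=0$ on $\partial D$) and that the subdomain splitting of $\sigma$ and $\mu$ correctly locates the $\mu'(z)\t z$ contribution inside $\Omega$ only. Nothing here is a genuine obstacle once the solution is smooth enough to justify the chain rule and integration by parts.
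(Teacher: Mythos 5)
Your proof is correct and follows essentially the same route as the paper: test the temperature equation with $1$ and Maxwell's equation with $\t \f A$, use the phase equation to cancel the $\tau(\theta)|\t z|^2$ terms, and apply the chain rule identities for $\t e$ and $\t\bigl(\tfrac{1}{2\mu(z)}|\curl \f A|^2\bigr)$. The only cosmetic difference is that the paper tests~\eqref{inner} with $\t z$ and adds, whereas you substitute the phase equation directly into the expression for $\t e$; the cancellation is identical.
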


Testing equation~\eqref{temp} formally with $1$, equation~\eqref{magnetic} with $\t \f A $, equation~\eqref{inner} with $\t z$, and adding them up leads to
\begin{align}
\begin{split}
- \int_\Omega \theta  \partial _t \psi_\theta ( \theta ,z ) + \di (\kappa ( \theta , z) \nabla \theta ) -\left  ( \frac{1}{\mu(z)} \curl \f A \cdot \curl \t \f A \right ) - \psi_z ( \theta , z  ) \t z \de \f x &
\\
\frac{\de }{\de t}\int_{D/\Omega} \frac{1}{2\mu_{\out} }| \curl \f A |^2 \de \f x + \int_{D/\Omega} \sigma_{\out} | \t \f A |^2 \de \f x 
&= \int_\Sigma \f J_s \cdot \t \f A \de \f x  \,.
\end{split}
\label{energycalc}
\end{align}
For the heat-flux, we observe due to the Neumann boundary conditions that
\begin{align*}
\int_\Omega \di ( \kappa(\theta , z) \nabla \theta ) \de \f x = \int_{\partial \Omega} \f n \cdot( \kappa (\theta , z)  \nabla \theta ) \de S = 0\,.
\end{align*}
Additionally,  we observe that 
\begin{align*}
\t e ( \theta , z) = \t ( \psi(\theta ,z) - \theta \psi_\theta ( \theta ,z) )  = \psi_z( \theta,z) \t z - \theta \t \psi _{ \theta} \,
\end{align*}
and 
\begin{align*}
\frac{1}{\mu(z)} \curl \f A \cdot \curl \t \f A  = \frac{1}{2\mu(z)} \t |  \curl \f A|^2  = \t \left ( \frac{1}{2\mu(z)} | \curl \f A |^2 \right ) + \frac{\mu'(z) \t z }{2 \mu^2(z)}| \curl \f A |^2  \,,
\end{align*}
which implies the assertion.
\begin{lemma}\label{lem:thetae}
Let the Assumption~\ref{Asume:1} be fulfilled. Then there exist two constants $0<c \leq C <\infty
$ such that 
\begin{align*}
c (\theta-1) \leq e( \theta ,z ) = \psi(\theta , z) - \theta \psi_{\theta}(\theta ,z) \leq C ( \theta+1) \quad \text{and} \quad c ( \log \theta-1) \leq - \psi_\theta (\theta ,z) \leq C( \log \theta+1) 
\end{align*}
for all $\theta , \,z \in (0,\infty) \times \R$. 
Additionally, the two inverse functions $ \hat{e}: (0,\infty) \times \R \ra (0,\infty) $ and  $ \hat s: \R \times \R \ra \R $ defined via $ \hat e ( e (\theta ,z) , z) = \theta $ and $\hat{s}(\psi_\theta (\theta ,z) ,z) = - \log \theta $ are well defined and differentiable. 
Additionally, it holds $ e( \theta , z) \geq 0$ for all $ (\theta, z)  \in [0,\infty) \times \R$.
\end{lemma}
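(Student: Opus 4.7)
The strategy rests on the identity $\partial_\theta e(\theta, z) = \partial_\theta(\psi - \theta \psi_\theta) = -\theta \psi_{\theta\theta}$, which by Assumption~\ref{Asume:1} is uniformly sandwiched between $c$ and $C$. Integrating in $\theta$ on $[0, \theta]$ and using the boundary value $e(0, z) = \psi(0, z)$ at once yields
\begin{align*}
\psi(0, z) + c\theta \leq e(\theta, z) \leq \psi(0, z) + C\theta.
\end{align*}
Since $\psi(0, z) \geq 0$, the lower bound $e \geq c\theta \geq c(\theta-1)$ and the nonnegativity $e \geq 0$ follow immediately. The upper bound in the form $C(\theta+1)$ is obtained once one has a uniform-in-$z$ bound on $\psi(0, \cdot)$, which I would extract from the assumption $|\psi_z| \leq C$ together with a normalization such as $\psi(0,0)$ being finite.

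For the bound on $-\psi_\theta$, I would use the estimate $c/\theta \leq -\psi_{\theta\theta} \leq C/\theta$ coming from the same assumption. Integrating from the reference point $\theta = 1$ gives
\begin{align*}
c\log\theta \leq -\psi_\theta(\theta, z) + \psi_\theta(1, z) \leq C\log\theta \quad \text{for } \theta \geq 1,
\end{align*}
with the analogous inequality (with signs reversed) for $\theta \leq 1$. Uniform control of $\psi_\theta(1, z)$ in $z$ follows from the bound $|(1+\theta)\psi_{z\theta}| \leq C$ at $\theta = 1$ combined with a reference value, delivering the claimed two-sided logarithmic bounds.

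Invertibility and differentiability of $\hat e$ and $\hat s$ then reduce to one-variable statements at each fixed $z$. Since $\partial_\theta e = -\theta\psi_{\theta\theta} \geq c > 0$ on $(0,\infty)$, the map $\theta \mapsto e(\theta, z)$ is a strictly increasing absolutely continuous bijection of $(0, \infty)$ onto $(\psi(0, z), \infty)$; the inverse function theorem gives $\hat e(\cdot, z) \in C^1$ with $\partial_E \hat e = 1/(-\theta \psi_{\theta\theta}) \in [1/C, 1/c]$. The same reasoning applied to the strictly decreasing map $\theta \mapsto \psi_\theta(\theta, z)$ (using $\psi_{\theta\theta} \leq -c/\theta < 0$) produces the differentiable $\hat s$. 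The main obstacle I foresee is ensuring constants are truly uniform in $z \in \R$: since Assumption~\ref{Asume:1} merely guarantees $\psi \in W^{2,\infty}((0,\infty) \times \R)$, boundary values like $\psi(0, z)$ and $\psi_\theta(1, z)$ could in principle grow in $z$, so extracting $z$-independent constants requires carefully exploiting the bounds $|\psi_z|, |(1+\theta)\psi_{z\theta}| \leq C$ together with any implicit reference normalization in the model.
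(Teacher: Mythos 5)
Your proposal is correct and follows essentially the same route as the paper: integrating the two-sided bound $c\le -\theta\psi_{\theta\theta}\le C$ in $\theta$ for $e$, performing the equivalent $\log\theta$ change of variables for $\psi_\theta$, and invoking the inverse function theorem for $\hat e$ and $\hat s$; your nonnegativity argument ($e\ge\psi(0,z)+c\theta\ge0$) is a one-line variant of the paper's tangent-line inequality $e\ge\psi(0,z)\ge0$. The uniformity-in-$z$ caveat you raise about $\psi(0,z)$ and $\psi_\theta(1,z)$ is present in the paper's proof as well and is harmless in the intended application, where $z$ remains bounded.
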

\begin{proof}
The derivative of $e$ with respect to $\theta$ is given by $ - \theta \psi_{\theta \theta } $ and controlled from below and above due to Assumption~\eqref{heatcapacity}. 
Writing
\begin{align*}
e(\theta ,z) = e(0,z) + \int_0^\theta \partial _\theta e(r,z) \de r = e(0,z) + \int_0^\theta \left ( - r \psi_{\theta \theta} (r ,z )\right ) \de r \geq  e ( 0 ,z) + c   \theta\quad ( \leq e ( 0 ,z) +C   \theta) \,.
\end{align*}
The good control on the derivative of $\theta \mapsto e( \theta ,z)$ allows to deduce the existence and regularity of an inverse function with the inverse function theorem. 

Now we observe that $ \psi_\theta ( \theta , z) = \psi _\theta (1/ \exp({-\log \theta }) , z)$ such that for $ \tilde{\psi }_\theta (\zeta , z)  := \psi_\theta (\exp({-\zeta }), z)$ it holds 
\begin{align*}
\partial _\zeta \tilde{ \psi}_\theta (\zeta ,z) =- \exp({-\zeta}) \psi_{\theta \theta }  ( \exp({-\zeta}) , z)  \,.
\end{align*} 
The same arguments as for the function $e$ show that
\begin{multline*}
\psi_\theta (\theta , z) =  \psi_{\theta } ( 1,z) + \int_0^{-\log \theta } \left ( - \exp({-\zeta}) \psi_{\theta \theta } ( \exp({-\zeta }) , z)\right )  \de \zeta  \geq  \psi _{\theta} (1,z) - c \log \theta  \quad \\( \leq   \psi _{\theta} (1,z) -C \log \theta) \,,
\end{multline*}
which provides the second set of assertions of the lemma.
Finally, we observe with the concavity of $ \psi$ (see~\eqref{heatcapacity}) that 
\begin{align*}
e( \theta , z) = \psi( \theta , z) - \theta \psi_\theta ( \theta , z) = \psi ( \theta , z) + ( 0-\theta ) \psi_\theta ( \theta , z) \geq \psi( 0 ,z) \geq 0\,.
\end{align*}
\end{proof}
\section{Existence}
In this section, we prove the existence result of Theorem~\ref{thm:weakin}.
 \subsection{Existence of solutions to the approximate system\label{sec:ex}}
Following a standard approach, we discretize and regularize the state equations in order to find a solution to this approximate system. We use a Galerkin approximation scheme for Maxwell's equation, the energy balance and the phase equation are regularized appropriately.
The standard procedure executed in the sequel of this section consists of proving the local existence of solutions to the approximate problem via Schauder's fixed point argument, deducing \textit{a priori} estimates for the system,  assuring the existence of global solutions to the approximate system, and extracting a converging subsequence, which allows to go to the limit in the approximate system and attain  weak entropy  solution in the limit. 

Consider the following  system, which is discretized in Maxwell's equation and regularized in the energy balance as well as the phase equation. The system is given by
 \begin{subequations}\label{eq:reg}
\begin{align}
\t e^\varepsilon ( \theta , z)   
-\di \left ( \kappa_\varepsilon (\theta , z) \nabla \theta \right ) 
- \frac{\varepsilon}{\theta^2}={}& \sigma^\varepsilon ( \theta ) | \t \f A   |^2  + \delta^{3/2} \tau^\varepsilon (\theta ) | \Delta z|^2    \,,\quad \theta (0) = \theta _0 ^\varepsilon \,, \label{tempreg}\\
\int_0^T \int_D \sigma^\varepsilon (\theta)  \t \f A \cdot \f w  +  \frac{1}{\mu(z)} \curl \f A \cdot \curl \f w \de x\de t ={}& \int_0^T \int_D \f J_s \cdot \f w \de \f x \de t  \,,\quad \f A(0) = P_n \f A_0\,,  \label{magneticreg}\\
 \t z- \delta \Delta z  + \frac{\partial _z \psi^\varepsilon (\theta , z )}{\tau^\varepsilon(\theta)} = {}& 0\,,\quad  z(0) = z_0^\varepsilon  \label{innerreg}
\end{align}
\end{subequations}
for all $ \f w \in Y_n$, where $ \kappa_\varepsilon(\theta , z) = \kappa^\varepsilon ( r, z )+ \varepsilon r^2 $.
By $ \theta_0^\varepsilon$ and  $z^\varepsilon_0$, we denote suitable regularizations of $ \theta_0$ and  $ z_0$, respectively. 
Both equations are equipped with Neumann boundary conditions, i.e.,
\begin{align*}
\f n \cdot \kappa_\varepsilon ( \theta,z) \nabla \theta = 0 \,, \quad \f n \cdot \nabla z  =0  \quad \text{on }\partial \Omega\,.
\end{align*}
The functions $ \psi^\varepsilon $, $\kappa^\varepsilon$, $\sigma^\varepsilon$, and  $ \tau^\varepsilon$ are suitable regularizations of $ \psi$, $\kappa$, $\sigma$, and $\tau$, respectively. This regularizations are chosen in a way that
\begin{align}\label{continuousconvergence}
\psi ^\varepsilon \ra \psi \,, \quad  \text{in }\C^{1}((0,\infty)\times \R; \R) \,, \qquad  \tilde{\kappa}^\varepsilon \ra \tilde{\kappa} \,,\quad  \bar{\kappa}^\varepsilon \ra \bar{\kappa} \,, \quad \sigma^\varepsilon \ra\sigma \,, \quad \tau^\varepsilon \ra \tau\quad  \text{in } \C((0,\infty)\times \R; \R),
\end{align}
respectively and such that the Assumption~\eqref{Asume:1}
are fulfilled independently of $\varepsilon$. 
 Everywhere in $D/(\Omega\cap \Sigma )$, where $\sigma$ is vanishing, we set $\sigma_\varepsilon := \varepsilon $. 

The spaces $\{Y_n\}$ are suitable Galerkin spaces with $ Y_n \subset Y_{n+1} \subset \ldots \subset \f H_{\cur}$ and $P_n$ defines the $L^2$-orthonormal projection onto $Y_n$. 
For example this spaces can be chosen to be spanned by eigenfunctions of the operator $ \curl\curl $ equipped with tangential Dirichlet boundary conditions ($\f n\times \f A = 0$, i.e., on $\f H_{\cur}$), since this is the inverse operator of a self-adjoint compact operator (see~\cite[Section~4.5]{regMax} or~\cite{amrouche} for regularity results concerning this operator). 

We introduce the additional term $\varepsilon \theta^p$ to the heat-conductivity to infer additional regularity for the approximate system. The last term on the right-hand side of~\eqref{tempreg} assures that the solution to the approximate system remains bounded away from zero. The regularization in equation~\eqref{innerreg} is added to retain more regularity on the approximate level, and the last term on the right-hand side of~\eqref{tempreg} is introduced in order to be able to handle the influence of the regularization in~\eqref{innerreg} in the entropy production rate~\eqref{entropy}.
We remark that the $\delta$-regularization is only needed due to the $z$-dependence in $\kappa$. For $\partial_z \kappa\equiv 0$, we could chose~\eqref{eq:reg} with $\delta=0$ as approximate system and thus, the limit in the $\delta$-regularization could be omitted and the proof would simplify considerably.

We collect different  tools needed for the existence proof.
During this step, we omit the regularization parameter $\varepsilon$ for the readability. 
First, we focus on Maxwell's and the phase equation.
\begin{proposition}\label{prop:z}
Let $\Omega $ be of class $\C^2$ and $z_0^\varepsilon\in\C^2(\ov \Omega)$. Furthermore, let $ \theta\in \C (\ov \Omega \times [0,T])$. Then there exists a unique  solution $ z \in \C ( [0,T];\C^2(\ov\Omega))\cap \C^1([0,T]; \C(\ov \Omega))$ of~\eqref{innerreg}.
\end{proposition}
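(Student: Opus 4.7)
The equation~\eqref{innerreg} is a semilinear parabolic problem with Neumann boundary conditions of the form
\begin{equation*}
\partial_t z - \delta \Delta z = -\frac{\psi_z^\varepsilon(\theta(x,t),z)}{\tau^\varepsilon(\theta(x,t))} =: F(x,t,z),\qquad \partial_\nu z|_{\partial\Omega}=0,\qquad z(0)=z_0^\varepsilon,
\end{equation*}
so my plan is to interpret it via Duhamel's formula and apply Banach's fixed-point theorem. Let $S(t)$ denote the analytic semigroup generated by $\delta\Delta$ on $\C(\ov\Omega)$ under homogeneous Neumann conditions; the mild formulation reads
\begin{equation*}
z(t)=S(t)z_0^\varepsilon+\int_0^t S(t-s)\,F(\cdot,s,z(s))\,\de s.
\end{equation*}
By Assumption~\ref{Asume:1} (transferred to the regularised quantities via~\eqref{continuousconvergence}) I have $|\psi_z^\varepsilon|+|\psi_{zz}^\varepsilon|\le C$ and $\tau^\varepsilon\ge c>0$ uniformly, hence $F$ is bounded in $z$ and globally Lipschitz in $z$ with a constant depending only on $\|\theta\|_{\C(\ov\Omega\times[0,T])}$ through $c,C$. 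Since $\theta$ is continuous on the compact set $\ov\Omega\times[0,T]$ and $(\psi_z^\varepsilon,\tau^\varepsilon)$ are continuous in their arguments, the map $z\mapsto F(\cdot,\cdot,z)$ sends $\C([0,T];\C(\ov\Omega))$ into itself.

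I would first set up the contraction on the space $X_{T'}:=\C([0,T'];\C(\ov\Omega))$ with the supremum norm for $T'\le T$ small. The global Lipschitz bound on $F$ together with the uniform boundedness $\|S(t)\|_{\C\to\C}\le 1$ (maximum principle for the Neumann heat equation) yields a contraction once $T'$ is small enough, independently of the starting point. Thus one obtains a unique mild solution on $[0,T']$; since the Lipschitz constant of $F$ does not depend on the size of $\|z\|_\infty$, the solution can be continued by the same step size, covering the whole interval $[0,T]$. Uniqueness is a direct byproduct of the contraction argument.

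For the claimed regularity I bootstrap: with $z\in\C([0,T];\C(\ov\Omega))$ and $\theta\in\C(\ov\Omega\times[0,T])$, the right-hand side $F(\cdot,\cdot,z)\in\C(\ov\Omega\times[0,T])$ is bounded and uniformly continuous by compactness. Standard analytic-semigroup estimates for the Neumann heat equation then give $z\in\C^{\alpha,\alpha/2}(\ov\Omega\times[0,T])$ for some $\alpha\in(0,1)$; composed with the smooth $\psi_z^\varepsilon/\tau^\varepsilon$ and the given $\theta$, this in turn yields $F\in\C^{\alpha,\alpha/2}$ (with a loss only through $\theta$'s modulus of continuity, which is harmless for Schauder since we only need Hölder continuity of $F$ and this follows from the Hölder modulus of $z$ after a small additional regularisation argument on $\theta$ inside the approximation scheme, or alternatively by using $L^p$-maximal regularity for $p$ large and Sobolev embedding). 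Parabolic Schauder estimates for the Neumann problem (using $\partial\Omega\in\C^2$ and $z_0^\varepsilon\in\C^2(\ov\Omega)$) then upgrade the solution to $z\in\C^{2+\alpha,1+\alpha/2}(\ov\Omega\times[0,T])$, which embeds into the desired class $\C([0,T];\C^2(\ov\Omega))\cap\C^1([0,T];\C(\ov\Omega))$.

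The main obstacle is precisely this last step: $\theta$ is assumed only continuous, not Hölder, so a priori $F$ inherits only a modulus-of-continuity bound rather than a clean Hölder seminorm. I would circumvent it either (i) by relying on maximal $L^p$-regularity for $p$ large enough so that Morrey's embedding still yields $\C^2$ in space after time integration, or (ii) by noting that within the regularisation scheme $\theta$ may additionally be mollified, giving Hölder regularity at the approximate level while the eventual passage to the limit remains unaffected. Both routes are standard and yield the stated regularity, closing the proof.
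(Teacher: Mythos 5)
Your argument is the same one the paper uses: rewrite \eqref{innerreg} via Duhamel's formula for the Neumann heat semigroup on $\C(\ov\Omega)$, obtain existence and uniqueness by contraction (the paper simply cites Taylor, Prop.~15.3.1--15.3.2, for exactly this), and read off the extra regularity from the mild formulation and the data. Your core existence--uniqueness step is complete and correct, and you have rightly put your finger on the one genuinely delicate point, which the paper passes over in silence: with $\theta$ merely continuous the forcing $F$ is only continuous, and continuity of the right-hand side is \emph{not} enough for $\Delta z$ to be continuous (maximal regularity fails in $\C(\ov\Omega)$; one needs at least Dini continuity of $F$ in time, as one sees from the bound $\|\Delta S(t-s)(F(s)-F(t))\|_\infty\leq C\,\omega(t-s)/(t-s)$). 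Of your two proposed repairs, only the second actually reaches the stated space: maximal $L^p$-regularity gives $z\in L^p(0,T;W^{2,p}(\Omega))$ and Morrey's embedding stops at $\C^{1,\alpha}(\ov\Omega)$ in space, so route (i) cannot deliver $\C([0,T];\C^2(\ov\Omega))$. Route (ii) --- mollifying $\theta$ inside the approximation scheme so that the input to this proposition is H\"older continuous, after which parabolic Schauder theory applies --- is the one that works, and is consistent with the latitude the paper grants itself in choosing the regularizations; one only has to check that the mollification does not destroy the continuity and compactness of the Schauder fixed-point map, which it does not.
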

\begin{proof}
This result is an consequence of the fact that the fundamental solution associated to the heat equations forms a strongly continuous semi group on the continuous functions.
%
%
 The additional regularity can be read from the Duhamel formula and the regularity of the data.  
See for instance Taylor~\cite[Prop.~15.3.1 and~15.3.2 together with Ex.~15.3.1]{taylor}.

\end{proof}

\begin{proposition}\label{prop:A}
Let $ \theta\in \C (\ov \Omega \times [0,T])$ and  $ z \in \C(\ov \Omega\times [0,T])$.Then there exists a unique solution $ \f A_n\in \C^1([0,T_n] ; Y_n)$ to the approximate Maxwell equation~\eqref{magneticreg}. 
\end{proposition}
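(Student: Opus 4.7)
The plan is to exploit the finite-dimensionality of $Y_n$ and recast~\eqref{magneticreg} as a linear first-order ODE system whose mass matrix is uniformly positive definite thanks to the lower bound $\sigma^\varepsilon\geq\min\{c,\varepsilon\}>0$ on all of $D$; here I use crucially that the regularization $\sigma_\varepsilon:=\varepsilon$ enforces strict positivity even in the subdomain $D\setminus(\Omega\cap\Sigma)$ where the physical conductivity vanishes.

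First, fix a basis $\{\f w_1,\dots,\f w_n\}$ of $Y_n$ and write $\f A_n(t)=\sum_{k=1}^n\alpha_k(t)\f w_k$ with $\alpha(0)$ determined by $P_n\f A_0$. Inserting test functions of the form $\f w(\cdot,t)=\beta(t)\f w_j$ with $\beta\in\C_c^\infty((0,T))$ into~\eqref{magneticreg} and localising in time yields, for a.e.~$t$,
\begin{align*}
M(t)\dot\alpha(t)+K(t)\alpha(t)=b(t),
\end{align*}
with $M_{jk}(t)=\int_D\sigma^\varepsilon(\theta(t))\,\f w_k\cdot\f w_j\,\de\f x$, $K_{jk}(t)=\int_D\mu(z(t))^{-1}\curl\f w_k\cdot\curl\f w_j\,\de\f x$, and $b_j(t)=\int_\Sigma\f J_s(t)\cdot\f w_j\,\de\f x$. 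The continuity of $\theta,z$ on $\ov\Omega\times[0,T]$ combined with~\eqref{continuousconvergence} and Assumption~\ref{Asume:1} gives $M,K\in\C([0,T];\R^{n\times n})$, and $\f J_s\in L^2(0,T;L^2(\Sigma))$ gives $b\in L^2(0,T;\R^n)$.

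For every $\xi\in\R^n$ the coercivity estimate
\begin{align*}
\xi^\top M(t)\xi\geq\min\{c,\varepsilon\}\,\Bigl\|\sum_k\xi_k\f w_k\Bigr\|_{L^2(D)}^2\geq\lambda_n|\xi|^2
\end{align*}
holds with $\lambda_n>0$ depending only on the linearly independent family $\{\f w_k\}$, so $M(t)^{-1}$ is well defined and continuous on $[0,T]$. Left-multiplication by $M^{-1}$ turns the system into $\dot\alpha=-M^{-1}K\alpha+M^{-1}b$, a linear ODE with continuous coefficient matrix and $L^2$ forcing; Carathéodory's theorem (equivalently, the variation-of-constants formula with the fundamental matrix of $-M^{-1}K$) produces a unique absolutely continuous $\alpha$ on $[0,T]$, in particular on $[0,T_n]$, furnishing the required $\f A_n$. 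Uniqueness is immediate from linearity applied to the difference of two candidate solutions. The single non-trivial ingredient is the uniform coercivity of $M(t)$; everything else is routine finite-dimensional ODE theory, so I do not expect any serious obstacle here.
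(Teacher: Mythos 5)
Your proposal is correct and follows essentially the same route as the paper: recast~\eqref{magneticreg} as a linear ODE system in $\R^n$ and invoke Carath\'eodory's existence theorem; you merely make explicit the uniform positive definiteness of the mass matrix coming from $\sigma^\varepsilon\geq\min\{c,\varepsilon\}>0$ on all of $D$, which the paper leaves implicit. Note that, as in the paper, the argument delivers an absolutely continuous solution with $\dot\alpha\in L^2$ rather than the $\C^1$ regularity stated in the proposition (which would require $\f J_s$ continuous in time), but this discrepancy is inherited from the statement, not introduced by your proof.
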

\begin{proof}
Following the standard approach of a Galerkin discretization, we can express the equation~\eqref{magneticreg} as a system of ordinary differential equations. 
It is standard to prove the existence locally in time of solutions to the approximate problem in the sense of Carathe\'{e}odory, i.e., of solutions that are absolutely continuous with respect to time (see, e.g., \cite[Chapter I, Thm.~5.2]{hale}).
%
\end{proof}


In the following, we collect different results for the approximate energy balance. These are very similar to the ones in~\cite[Sec.~3.4.2]{singular}, but generalized to the current case (especially the $z$-dependence in of the heat conduction $\kappa$). First, a  criterion is given, which allows to show the positivity and boundedness of $\theta$ and thus, to test with $1/\theta$ on the approximate level.  
 \begin{lemma}\label{lem:comp}
 Let $ \Omega $ be of class $\C^2$, $z\in \C^1([0,T];\C(\ov \Omega)) \cap \C([0,T];\C^2(\ov \Omega ))$, and  $ \f A \in \C^1([0,T]; Y_n)$, assume that $ \underline{\theta}$ and $ \overline{\theta} $ are sub and super solutions of~\eqref{tempreg}, i.e., for $ \underline{\theta}$ and $\overline{\theta}$~\eqref{tempreg} holds with $=$ replaced by $\leq$ and $\geq$, respectively,     fulfilling 
 \begin{align}
 \begin{split}
 & \underline{\theta },\, \overline{\theta} \in L^\infty(0,T; W^{1,2}( \Omega)) \,, \quad \t \underline{\theta},\,\t \overline{\theta} \in L^2(0,T; L^2(\Omega)) \\
 &\di \left (  \bar{\kappa}(z) \nabla \int_0^{\ov\theta} \tilde{\kappa}(r) \de r \right )   , \, \di \left (  \bar{\kappa}(z) \nabla \int_0^{\un\theta} \tilde{\kappa}(r) \de r \right ) \in L^2(0,T;L^2(\Omega))\\
 &0 < \essinf_{(\f x,t) \in \Omega \times (0,T) } \underline{\theta}\leq \esssup_{(\f x,t) \in \Omega \times (0,T) } \underline{\theta} < \infty\\
 &0 < \essinf_{(\f x,t) \in \Omega \times (0,T) } \overline{\theta}\leq \esssup_{(\f x,t) \in \Omega \times (0,T) } \overline{\theta} < \infty
  \end{split}\label{assumptioncompare}
  \intertext{as well as}
 & \underline{ \theta}(0,\f x ) \leq \overline{\theta}(0,\f x ) \quad \text{a.e.~in }\Omega \notag\,.
\intertext{Then}
&\underline{ \theta}(t,\f x ) \leq \overline{\theta}(t,\f x ) \quad \text{a.e.~in }\Omega \times (0,T)\,.\notag
\end{align}  
 \end{lemma}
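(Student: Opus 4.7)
I would run a standard comparison argument via a Gronwall estimate on a positive part. Introduce the Kirchhoff transformation $K^\varepsilon(\theta) := \int_0^\theta \tilde\kappa^\varepsilon(r)\,\de r$ (smooth and strictly increasing, with $(K^\varepsilon)' = \tilde\kappa^\varepsilon \in [c,C]$) and set $\Phi := K^\varepsilon(\un{\theta}) - K^\varepsilon(\ov{\theta})$, so that $\Phi_+ := \max(\Phi,0)$ vanishes precisely where $\un{\theta} \leq \ov{\theta}$. By the regularity imposed in~\eqref{assumptioncompare}, $\Phi_+ \in L^\infty(0,T;L^2) \cap L^2(0,T;W^{1,2})$ with $\Phi_+(0) = 0$ (since $\un{\theta}_0 \leq \ov{\theta}_0$), so it is an admissible test function. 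The strategy is to subtract the super- and sub-solution inequalities for~\eqref{tempreg}, multiply by $\Phi_+$, and integrate over $\Omega\times(0,t)$.

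The motivation for testing against $\Phi_+$ lies in the diffusion: since $-\di(\kappa^\varepsilon(\theta,z)\nabla\theta) = -\di(\bar\kappa^\varepsilon(z)\nabla K^\varepsilon(\theta))$, integration by parts using the Neumann boundary condition yields the clean coercive contribution $\int_0^t\int_\Omega \bar\kappa^\varepsilon(z)|\nabla\Phi_+|^2\,\de\f{x}\,\de s$, with no cross terms. The additional $\varepsilon\theta^2\nabla\theta$-regularization is absorbed via Young's inequality into a small fraction of $\|\nabla\Phi_+\|_{L^2}^2$, using the $L^\infty$-bounds on $\un{\theta},\ov{\theta}$. For the time derivative I would use $\t e(\theta,z) = G(\theta,z)\t K^\varepsilon(\theta) + e_z(\theta,z)\t z$ with $G := e_\theta/(K^\varepsilon)' = -\theta\psi^\varepsilon_{\theta\theta}/\tilde\kappa^\varepsilon \in [c',C']$ and split $G(\un{\theta},z)\t K^\varepsilon(\un{\theta}) - G(\ov{\theta},z)\t K^\varepsilon(\ov{\theta}) = G(\un{\theta},z)\t\Phi + [G(\un{\theta},z)-G(\ov{\theta},z)]\t K^\varepsilon(\ov{\theta})$, whereupon the identity $\int G(\un{\theta},z)\t\Phi\cdot\Phi_+ = \tfrac{1}{2}\tfrac{\de}{\de t}\int G(\un{\theta},z)\Phi_+^2 - \tfrac{1}{2}\int[\t G(\un{\theta},z)]\Phi_+^2$ isolates the desired energy-type term. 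The contribution $-\varepsilon(1/\un{\theta}^2 - 1/\ov{\theta}^2)\Phi_+$ is pointwise nonnegative on $\{\un{\theta}>\ov{\theta}\}$ and can simply be discarded. The source differences $(\sigma^\varepsilon(\un{\theta})-\sigma^\varepsilon(\ov{\theta}))|\t\f{A}|^2$ and $\delta^{3/2}(\tau^\varepsilon(\un{\theta})-\tau^\varepsilon(\ov{\theta}))|\Delta z|^2$ are Lipschitz in $\theta$ on the relevant range, and the factors $|\t\f{A}|^2$ and $|\Delta z|^2$ are essentially bounded (from $\f{A} \in \C^1([0,T];Y_n)$ in the finite-dimensional Galerkin space, and $z \in \C([0,T];\C^2(\ov{\Omega}))$ by Proposition~\ref{prop:z}), so both contributions are bounded by $C\|\Phi_+\|_{L^2}^2$.

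Combining the pieces produces an estimate of the shape $\|\Phi_+(t)\|_{L^2}^2 + c\int_0^t \|\nabla\Phi_+\|_{L^2}^2\,\de s \leq \int_0^t h(s)\|\Phi_+(s)\|_{L^2}^2\,\de s$ with $h \in L^1(0,T)$, and since $\Phi_+(0) = 0$ Gronwall's inequality forces $\Phi_+ \equiv 0$, hence $\un{\theta} \leq \ov{\theta}$ almost everywhere. The main obstacle is securing the time-integrability of the chain-rule remainders $[\t G(\un{\theta},z)]\Phi_+^2$ and $[G(\un{\theta},z)-G(\ov{\theta},z)]\t K^\varepsilon(\ov{\theta})\Phi_+$, which involve only the $L^2(0,T;L^2)$-regularity of $\t\un{\theta}$ and $\t\ov{\theta}$. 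For these I would exploit the divergence bound in~\eqref{assumptioncompare}: on the $\C^2$-domain with Neumann data, elliptic regularity upgrades $K^\varepsilon(\ov{\theta})$ to $L^2(0,T;W^{2,2})$, so that $\nabla\ov{\theta} \in L^2(0,T;L^6)$; Gagliardo--Nirenberg interpolation then lets me trade a small fraction of $\|\nabla\Phi_+\|_{L^2}^2$ against these products, leaving a Gronwall-integrable residual.
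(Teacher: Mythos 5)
Your overall strategy (test the difference of the two differential inequalities against a positive part and close with Gronwall) is in the right spirit, but the specific choice of test function creates cross terms that cannot be closed with the regularity actually assumed in~\eqref{assumptioncompare}, and this is precisely the obstacle that the paper's argument is designed to avoid. Concretely: because the nonlinearity under the time derivative ($e(\theta,z)$) and the nonlinearity under the divergence ($\bar\kappa(z)K^\varepsilon(\theta)$ plus the non-factorizable $\varepsilon\theta^2\nabla\theta$ piece) are \emph{different} functions of $\theta$, testing with $\Phi_+=\bigl(K^\varepsilon(\un{\theta})-K^\varepsilon(\ov{\theta})\bigr)_+$ leaves you with remainders of the form $\int_\Omega \Phi_+^2\,|\partial_t\ov{\theta}|\,\de\f x$ and $\int_\Omega \Phi_+^2\,|\nabla\ov{\theta}|^2\,\de\f x$. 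With only $\partial_t\ov{\theta}\in L^2(0,T;L^2)$ and $\nabla\ov{\theta}\in L^\infty(0,T;L^2)$ (even upgraded by elliptic regularity to $L^2(0,T;L^6)$), every Gagliardo--Nirenberg/Young splitting of these terms either demands time-integrability that is not available (e.g.\ $\partial_t\ov{\theta}\in L^4(0,T;L^2)$ or $\nabla\ov{\theta}\in L^4(0,T;L^3)$ -- a short exponent count shows the required inequality $2(3-2s)\le 4(1-s)$ is never satisfiable), or degenerates to a sublinear Gronwall inequality $y'(t)\le h(t)\sqrt{y(t)}$ with $y(0)=0$, which does \emph{not} force $y\equiv 0$ (cf.\ $y=t^2$ for $y'=2\sqrt{y}$). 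So the "main obstacle" you flag at the end is not a technicality to be traded against the gradient term; as set up, the estimate does not close.

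The paper sidesteps this structurally by multiplying with $\sign(\un{\theta}-\ov{\theta})$ rather than with the difference itself. Since $e(\cdot,z)$, $\int_0^{\cdot}\tilde\kappa(r)\,\de r$ and $\theta\mapsto\theta^3$ are all monotone in $\theta$, the single sign function is simultaneously the sign of each of these differences; hence the time-derivative term collapses exactly to $\partial_t|e(\un{\theta},z)-e(\ov{\theta},z)|^+$ with no chain-rule remainder, each monotone diffusion piece is handled by the Kato-type inequality $-\int_\Omega\di(\bar\kappa(z)\nabla y)\,\sign y\,\de\f x\ge 0$ (proved via a smooth approximation of $\sign$), and the right-hand side differences are Lipschitz-controlled by $|e(\un{\theta},z)-e(\ov{\theta},z)|^+$ itself. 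This yields a \emph{linear} Gronwall inequality for $\int_\Omega|e(\un{\theta},z)-e(\ov{\theta},z)|^+\de\f x$, which is why the $L^1$-contraction route succeeds where the $L^2$-energy route stalls. If you want to keep an $L^2$-type argument you would have to strengthen the hypotheses on $\partial_t\ov{\theta}$ and $\nabla\ov{\theta}$, which the lemma cannot afford since it is applied to the barely-regular sub/supersolutions constructed in Proposition~\ref{prop:t}.
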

 \begin{proof}
This Lemma is a simple adaptation of~\cite[Lemma~3.2]{singular} to the considered case.   
An additional difficulty is that $\kappa
$ depends on $z$. 
  We can multiply the approximate energy equation with $ \sign (  \underline{\theta}-\overline{\theta}) $, where $ \sign (y) = 0 $ if $ y\leq 0$ and $\sign (y) = 1$ if $y>0$. 
  Since $ e(\cdot,z)$ and $ \int^{\cdot}_{0} \tilde{\kappa}(r) \de r $ are monotonic functions, we may infer
  \begin{multline*}
  \t (  e ( \underline{\theta}, z)-e(\overline \theta , z)  ) \sign( e( \underline{\theta},z)-e( \ov \theta ,z) )- 
   \di \left (  \bar{\kappa}(z) \nabla \int_{\ov\theta}^{\un\theta} \tilde{\kappa}(r) \de r \right ) 
   \sign  \left (  \int_{\ov\theta}^{\un\theta} \tilde{\kappa}(r) \de r
   \right ) 
\\  
%
-\varepsilon\left (   \frac{1}{\underline{\theta}^2}-\frac{1}{\ov \theta ^2}\right ) \sign (  \underline{\theta}-\ov \theta ) 
\\  \leq  |  \t \f A |^2   |   \sigma ( \underline{\theta} )-\sigma ( \ov \theta ) |  
\sign ( \underline{\theta}- \ov \theta ) +\delta^{3/2} | \Delta z|^2 | \tau(\underline{\theta})-\tau(\overline{\theta })| \sign (  \underline{\theta}-\ov \theta )
\,.
  \end{multline*}
  We observe that for $ | y|^+:= \max \{ y, 0\} $ it holds  
  $$  |y|^+ =  y \,\sign( y)\,, \quad \t | y| ^+= \t y\, \sign( y) \,, \quad \nabla | y|^+  = \nabla y \, \sign (y) \quad \text{ a.e.~in }\Omega \times (0,T)\,.$$
  Additionally, the monotonicity of $\theta \mapsto  - 1 / \theta^2 $ implies
$ \varepsilon\left (   {1}/{\underline{\theta}^2}-{1}/{\ov \theta ^2}\right )\sign ( \underline{\theta}- \ov \theta ) \leq 0$.
Since, $\sigma
$ and $\tau
$ are Lipschitz continuous and $ \t \f A$ as well as $\Delta z$ are bounded, we infer that 
$$ \left ( | \t  \f A |^2   |  \sigma ( \underline{\theta} )- \sigma ( \ov \theta ) |   
+ \delta^{3/2} | \Delta z|^2 
| \tau (\un \theta ) - \tau(\ov\theta)| \right )\sign( \underline{\theta}- \overline{\theta} )
\leq c |  e( \underline{\theta}, z)-e(\overline{\theta}, z) |^+\,.$$
%
%
%
%
Integrating over $\Omega$, implies that
\begin{multline*}
 \t\int_\Omega |  e ( \underline{\theta}, z)-e(\overline \theta , z)  |^+\de \f x - \int_\Omega  
   \di \left (  \bar{\kappa}(z) \nabla \int_{\ov\theta}^{\un\theta} \tilde{\kappa}(r) \de r \right )  \sign  \left (  \int_{\ov\theta}^{\un\theta} \tilde{\kappa}(r) \de r
   \right )   \de \f x\\ \leq c  \int_\Omega  | e ( \underline{\theta}, z)- e(\overline \theta , z)  |^+\de \f x  \,.
\end{multline*}

Via a suitable approximation of the $\sign$ function and an integration-by-parts, we may observe in the limit of the approximation that
   \begin{align*}
-  \int_{\Omega } \di (\bar{\kappa}(z) \nabla y ) \sign y  \de \f x\geq 0 
    \end{align*}
  for $ y \in W^{2,2}( \Omega) $ with $ \f n \cdot \nabla y =0 $ on $\partial \Omega$ (compare to~\cite[Lemma~3.2]{singular}). This can be shown by approximating the $\sign$-function appropriately and applying an integration-by-parts on the approximate lemma, estimate, and go to the limit afterwards.
  Gronwall's lemma implies with the monotonicity of $e$ in $\theta$ the assertion. 
  
   \end{proof}
 
 \begin{proposition}\label{prop:t}
 Let $ \f A \in \C^1([0,T]; Y_n)$ and  $ z \in \C([0,T];\C^2(\ov\Omega))\cap \C^1([0,T]; \C(\ov\Omega))$ be given and let $ \theta_0^\varepsilon \in \C^2 ( \overline \Omega)$. Then there exists a unique strong solution  $\theta \in \C^{0,\alpha}(\ov \Omega \times [0,T])$ for an $\alpha>0$ to equation~\eqref{tempreg} satisfying the estimate
 \begin{multline*}
\| \theta \|_{L^\infty( W^{1,2})}^2 + \| \t \theta \|_{L^2(0,T; L^2)} ^2 + \| \di ({\kappa} ( \theta,z )\nabla \theta)  \|_{L^2(0,T;L^2)}+ \| \theta \|_{\C^{0,\alpha}(\ov\Omega\times [0,T]) } \\
\leq  g( \| \f A \|_{ C^1([0,T];Y_n)} , \| z\|_{\C1( [0,T];\C(\ov\Omega))\cap\C([0,T];\C^2(\ov\Omega))}+ \| \theta^\varepsilon _0\|_{ W^{1,2}(\Omega)} ) \,,
 \end{multline*}
 where $g$ is bounded function on bounded sets  and  $ \hat{\kappa}
 (\theta  , z ) = \int_0^\theta  \kappa ( r, z ) \de r$. 

 \end{proposition}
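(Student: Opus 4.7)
The strategy is to treat~\eqref{tempreg} as a quasilinear parabolic equation with bounded, non-degenerate principal part and combine a comparison-principle bound with Schauder's fixed-point theorem. The structural input under Assumption~\ref{Asume:1}, in particular~\eqref{heatcapacity}, is that $\partial_\theta e^\varepsilon=-\theta\psi^\varepsilon_{\theta\theta}$ lies in a compact interval $[c,C]\subset(0,\infty)$, so $\theta\mapsto e^\varepsilon(\theta,z)$ is bi-Lipschitz and the equation becomes uniformly parabolic as soon as $\theta$ is a priori confined to a compact subinterval of $(0,\infty)$.

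First, I would establish such $L^\infty$-bounds via Lemma~\ref{lem:comp}. Since $\f A\in\C^1([0,T];Y_n)$ and $z\in\C([0,T];\C^2(\ov\Omega))\cap\C^1([0,T];\C(\ov\Omega))$, the forcing
\[
F(\theta)=\sigma^\varepsilon(\theta)|\t\f A|^2+\delta^{3/2}\tau^\varepsilon(\theta)|\Delta z|^2
\]
is bounded in $L^\infty(\Omega\times(0,T))$ and the drift $\partial_z e^\varepsilon\,\t z$ is controlled by~\eqref{heatcapacity}. A sufficiently small constant $\underline\theta\in(0,\essinf_{\Omega}\theta_0^\varepsilon]$ for which $\varepsilon/\underline\theta^{\,2}$ dominates the sum of these quantities serves as a constant sub-solution, and a sufficiently large constant $\overline\theta\geq\esssup_{\Omega}\theta_0^\varepsilon$ as a constant super-solution; Lemma~\ref{lem:comp} then yields the pointwise bounds $0<\underline\theta\leq\theta\leq\overline\theta$, with both constants depending only on the norms that appear in $g$.

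With these bounds in hand, I would set up a Schauder fixed-point map on the closed convex set $K=\{\bar\theta\in\C^{0,\alpha}(\ov\Omega\times[0,T]):\underline\theta\leq\bar\theta\leq\overline\theta\}$, sending $\bar\theta$ to the solution $\theta$ of the linear Neumann problem obtained by freezing all nonlinearities at $\bar\theta$. Because the resulting principal part has Hölder-continuous coefficients bounded away from zero and the right-hand side lies in $L^\infty$, the Ladyzhenskaya--Solonnikov--Ural'tseva theory for linear parabolic equations produces a unique $\theta\in\C^{0,\alpha'}$ with $\alpha'>\alpha$ and estimates depending only on the data, which gives compactness in $\C^{0,\alpha}$; a second application of the comparison principle keeps the image in $K$, so Schauder's theorem produces a fixed point that is a strong solution of~\eqref{tempreg}. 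The quantitative estimate is then obtained by testing the equation with $\t\theta$ and with $\di(\kappa_\varepsilon(\theta,z)\nabla\theta)$: the first test yields $\|\t\theta\|_{L^2L^2}^2+\|\nabla\theta\|_{L^\infty L^2}^2\leq C$ thanks to $-\theta\psi^\varepsilon_{\theta\theta}\geq c$, Gronwall's lemma and the bounded $\t z$-remainder, while the second gives the $L^2(L^2)$-bound on $\di(\kappa_\varepsilon(\theta,z)\nabla\theta)$; the $\C^{0,\alpha}$-bound is inherited from the fixed-point step. Uniqueness follows by applying Lemma~\ref{lem:comp} in both directions to the difference of two candidate solutions. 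The main technical obstacle lies in the first step: verifying that the singular sink $-\varepsilon/\theta^{\,2}$ genuinely produces a strictly positive uniform lower bound on $\theta$, so that the linear theory may be invoked on a compact subinterval of $(0,\infty)$, requires a careful matching of constants against the $L^\infty$-norms of the data appearing in $g$.
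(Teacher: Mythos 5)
Your overall strategy --- two-sided $L^\infty$ bounds from Lemma~\ref{lem:comp}, energy estimates, H\"older regularity from quasilinear parabolic theory, uniqueness again from Lemma~\ref{lem:comp} --- is essentially the paper's; the paper dismisses the existence mechanism as a ``routine'' regularization/limit passage where you make it explicit via linearization and Schauder, which is a legitimate alternative. But there is a concrete gap in your first step: a \emph{constant} $\overline{\theta}$ cannot be a super-solution of~\eqref{tempreg}. For a spatially and temporally constant function the super-solution condition reduces to
\begin{align*}
0=\t\overline{\theta}\;\geq\;\frac{1}{-\overline{\theta}\,\psi_{\theta\theta}(\overline{\theta},z)}\Bigl(\sigma(\overline{\theta})\,|\t\f A|^2+\delta^{3/2}\tau(\overline{\theta})\,|\Delta z|^2+\tfrac{\varepsilon}{\overline{\theta}^{\,2}}\Bigr)
+\frac{1}{\overline{\theta}\,\psi_{\theta\theta}(\overline{\theta},z)}\Bigl(\psi_z\,\t z-\overline{\theta}\psi_{\theta z}\,\t z+\textstyle\int_0^{\overline{\theta}}\partial_z\kappa\,\de r\,\Delta z+\dots\Bigr),
\end{align*}
and the first bracket is strictly positive (the regularizing source $\varepsilon/\theta^2$ alone already spoils it, never mind the Joule heating), while the second has no sign and grows linearly in $\overline{\theta}$. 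No large constant helps. This is exactly why the paper uses the time-dependent barrier $\overline{\theta}(t)=\bigl(\sup_{\f x}\theta_0^\varepsilon(\f x)+1\bigr)e^{ct}$: the right-hand side is bounded by $c(\overline{\theta}+1)$, so an exponential in time dominates it. Your constant \emph{sub}-solution is fine, since there the sink $\varepsilon/\underline{\theta}^2$ really does dominate all bounded competitors for $\underline{\theta}$ small.

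A second, lesser, weakness is your choice of test functions. Testing with $\t\theta$ does not close: the elliptic term produces $\int_\Omega\kappa_\varepsilon(\theta,z)\nabla\theta\cdot\nabla\t\theta\,\de\f x$, which is \emph{not} $\tfrac12\t\int_\Omega\kappa_\varepsilon|\nabla\theta|^2\de\f x$ because $\kappa_\varepsilon$ depends on $\theta$ and $z$; the commutator contains $\int_\Omega\partial_\theta\kappa_\varepsilon\,\t\theta\,|\nabla\theta|^2\de\f x$, which you cannot absorb. The paper instead tests first with $\theta$ and then with the time derivative of the Kirchhoff transform, $\t\hat\kappa(\theta,z)=\kappa_\varepsilon(\theta,z)\t\theta+\int_0^\theta\partial_z\kappa(r,z)\,\de r\;\t z$, which turns the elliptic term into $\tfrac12\t\int_\Omega|\nabla\hat\kappa(\theta,z)|^2\de\f x$ up to controllable $\nabla z$-remainders; the bound on $\di(\kappa\nabla\theta)$ then follows by comparison in the equation rather than by testing with it. With these two corrections your argument coincides with the paper's; the H\"older step (you via De Giorgi/LSU for the frozen linear problem, the paper via rewriting as a quasilinear equation for $e$ and citing Trudinger together with the $\C^1$-inverse from Lemma~\ref{lem:thetae}) and the uniqueness step are the same.
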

\begin{proof}
In a first step, we observe that a solution $\theta$ to~\eqref{tempreg} is unique due to Lemma~\ref{lem:comp}. Additionally, we observe that a constant function is a subsolution to~\eqref{tempreg} as soon as 
\begin{multline*}
\frac{\varepsilon}{\underline\theta^2}  \geq \psi_z(\underline{\theta} , z) \t z -\underline{ \theta} \psi_{\theta z} ( \underline{\theta }, z) \t z +  \int_0^{\underline{\theta}} \partial_z \kappa
(r,z) \de r \Delta z+  \int_0^{\underline{\theta}} \partial_{zz} \kappa%
(r,z) \de r | \nabla z|^2 \\- \sigma(\underline{\theta}) | \t \f A |^2 - \varepsilon ^{3/2}\tau (\un \theta ) | \Delta z|^2 \,.
\end{multline*}
Since all terms on the right-hand side are bounded, for bounded $\underline{\theta}$, we can find a small enough constant $\underline{\theta}>0$ such that the above inequality is fulfilled. 

Reformulating equation~\eqref{tempreg} for a spatially constant function, we observe that $ \overline{\theta}$ is a supersolution to~\eqref{tempreg}, if
\begin{multline*}
\t \overline{\theta} \geq \frac{1}{\ov \theta \psi_{\theta\theta}(\overline{\theta },z)}\left (   \psi_z(\overline{\theta} , z) \t z -\overline{ \theta} \psi_{\theta z} ( \overline{\theta }, z) \t z +  \int_0^{\overline{\theta}} \partial_z \kappa
(r,z) \de r \Delta z+  \int_0^{\overline{\theta}} \partial_{zz} \kappa
(r,z) \de r | \nabla z|^2 \right ) 
\\
+ \frac{1}{-\ov \theta \psi_{\theta\theta}(\overline{\theta },z)}\left (   \sigma(\overline{\theta}) | \t \f A|^2 +\varepsilon ^{3/2}\tau (\ov \theta ) | \Delta z|^2+ \frac{\varepsilon}{\overline{\theta}}\right )
\,.
\end{multline*}
Due to the assumptions, the right-hand side of the previous inequality is bounded from below by~$-c(\ov \theta +1)$. 
We may infer
that $ \ov \theta (t) = ( \sup _{\f x \in \Omega} \theta^\varepsilon_0 (\f x)+1  ) e^{c t}$ is a super solution to~\eqref{tempreg}. We conclude that a solution $\theta$ to~\eqref{tempreg} is essentially bounded, i.e., $\theta \in L^\infty( 0,T ;L^\infty)$  with $\esssup \theta (\f x ,t) \leq  ( \sup _{\f x \in \Omega} \theta^\varepsilon_0 (\f x) +1 ) e^{c T}$.

We are going to prove the estimates. The existence is then a routine matter of finding a suitable regularization or discretization and passing to the limit (see~\cite[Sec.~3.4.3]{singular}).
First, we observe that $ \t e( \theta, z) = - \psi_{\theta\theta}( \theta , z)\theta\t  \theta  + \psi_{z} ( \theta ,z) \t z - \theta \psi_{ \theta z} ( \theta , z) \t z \,.$   
Testing~\eqref{tempreg} by $\theta $ implies 
\begin{multline*}
\t\int_{\Omega} \int_0^\theta (-\psi_{\theta\theta} ( r ,z) r^2 )\de r\de \f x+ \int_\Omega \kappa
( \theta ,z  ) | \nabla \theta|^2 \de \f x \\ \leq \int_\Omega\sigma(\theta ) \theta |\partial _t  \f A|^2+ \delta^{3/2} \tau{(\theta)} \theta | \Delta z|^2    - \psi_z ( \theta , z) \t z \theta 
+ \theta^2 \psi_{\theta z}(\theta,z) \t z + \int_0^\theta \psi_{\theta\theta z}( r,z)r^2 \de r \t z \de \f x
\end{multline*}
Due to the regularity of $\f A$ and $z$ as well as the growth conditions~\eqref{heatcapacity}, the right-hand side can be estimated by $ c( | \theta|^2+1)$. 
Integrating in time, Using again~\eqref{heatcapacity} to estimate the left hand side from below and applying Gronwall's estimate, we may infer
\begin{align}
\| \theta\|_{L^\infty(L^2)}^2 + \| \theta\|_{L^2(W^{1,2})}^2 \leq c \,.\label{disestone}
\end{align}
Testing now equation~\eqref{tempreg} by $ \t \hat{\kappa}
(\theta , z )= \kappa_\varepsilon(\theta ,z) \t \theta + \int_0^\theta \partial _z \kappa
(r, z) \de r \t z $ implies
\begin{multline*}
\int_\Omega \left (-  \theta\psi_{\theta,\theta} ( \theta,z) \kappa
(\theta,z) \right )| \t \theta|^2 \de \f x + \t \int_\Omega \frac{1}{2}| \nabla \hat{\kappa}
( \theta,z)|^2 \de \f x 
=\\ \int_{\Omega}  \t \hat \kappa
(\theta ,z) \left (  \sigma(\theta) | \t \f A |^2 + \delta^{3/2} \tau(\theta ) | \Delta z|^2 + \frac{\varepsilon}{\theta^2} \right ) + \theta \psi_{\theta,\theta}(\theta ,z) \t \theta \int_0^\theta \partial_z \kappa
(r,z)\de r \t z \de \f x
\\
- \int_{\Omega}  \t \hat \kappa
(\theta ,z) \left (  \di \left (\int_0^\theta \partial_z \kappa
( r, z) \de r \nabla z \right )  + \psi_z ( \theta,z) \t z - \theta \psi_{\theta z}( \theta,z) \t z \right ) \de \f x
\,.
\end{multline*}

The right-hand side can be further estimated. Therefore, we exploit~\eqref{heatcapacity}, 
\begin{align*}
 \t \hat{\kappa}
(\theta , z )={}& \kappa
(\theta ,z) \t \theta + \int_0^\theta \partial _z \kappa
(r, z) \de r \t z \, \intertext{and} 
 \di \left (\int_0^\theta \partial_z \kappa
( r, z) \de r \nabla z \right ) ={}&\partial_z \kappa
(\theta ,z) \nabla \theta \cdot\nabla z + \int_0^\theta \partial_z \kappa
( r, z) \de r \Delta z + \int_0^\theta \partial_{zz} \kappa
( r, z) \de r | \nabla z|^2 \,, \end{align*}

to estimate 
\begin{multline*}
\frac{1}{2}\int_\Omega \left (-  \theta\psi_{\theta,\theta} ( \theta,z) \kappa
(\theta,z) \right )| \t \theta|^2 \de \f x + \t \int_\Omega \frac{1}{2}| \nabla \hat{\kappa}
( \theta,z)|^2 \de \f x 
\\ \leq  c \int_\Omega \kappa
( \theta ,z) \left ( ( \partial_z \kappa
( \theta,z) )^2 | \nabla z|^2 | \nabla\theta|^2+\left ( \int_0^\theta \partial_{zz} \kappa
(r,z) \de r\right )^2 | \nabla z|^4 +\left ( \int_0^\theta \partial_z \kappa
(r,z) \de r\right )^2| \Delta z|^2  \right )  \de \f x \\
+c \int_\Omega | \t z|^2\left (  \kappa
(\theta,z) + 1 + \left ( \int_0^\theta \partial_z \kappa
(r,z) \de r \right )^2 \right ) + \kappa
(\theta ,z) \left ( \frac{\varepsilon^2}{\theta^4 } + \sigma^2(\theta ) | \t \f A |^4 + \delta ^3 \tau^2(\theta) | \Delta z|^4   \right )  \de \f  x \,,
\end{multline*}
where all terms depending on $\t \theta$ were estimated by Young's inequality such that all contributions of $\t \theta$ can be absorbed in the left-hand side. 
Further on, we estimate
\begin{multline*}
\underline{\kappa}\kappa
(\theta ,z) | \nabla \theta |^2 \leq  \kappa^2
(\theta,z) | \nabla \theta |^2 \leq \frac{1}{2} | \nabla \hat\kappa
( \theta,z) |^2 + c \left |\int_0^\theta \partial_z \kappa
(r, z) \de r  \right |^2 | \nabla z |^2
\\
\leq \frac{1}{2} | \nabla \hat\kappa
( \theta,z) |^2 + c \left |\theta  \right |^2 | \nabla z |^2\,.
\end{multline*}
In view of the estimate~\eqref{disestone}, the last term on the right hand side is bounded. Note that $ \kappa
(\theta ,z) \leq c ( \theta^2 +1) $ and $\partial_z \kappa
\leq  C$. 
Note that the derivatives of $\kappa
$ can be assumed to be bounded due to the regularization in this stage.


Combining the last two estimates together with the regularity of $\f A$ and $z$ as well as the growth conditions~\eqref{heatcapacity}, we can apply Gronwall's lemma to observe
\begin{align*}
\| \theta \|_{L^\infty(W^{1,2})} ^2 + \| \nabla \kappa
(\theta,z) \|_{L^\infty(L^2)}+ \| \theta \|_{W^{1,2}(L^2)}^2 \leq c \,.
\end{align*} 
Comparison in equation~\eqref{tempreg} allows to infer the last  bound on~$\| \di ( \kappa
(\theta, z)\nabla \theta )\|_{L^2(L^2)} $. 
We can reformulate the equation as a parabolic equation for $e$. 
Indeed, observing that $ \nabla \theta = ( \nabla e - \partial_z \psi \nabla z + \theta \partial_{\theta z} \psi \nabla z ) / ( - \theta \partial_{\theta \theta } \psi ) $, we find 
\begin{multline*}
\t e - \di ( \kappa^*
( e,z) \nabla e) =\\  \varepsilon \frac{1}{\theta } + \sigma(\theta ) | \t \f A |^2 +\delta ^{3/2} \tau( \theta )| \Delta z|^2   -\di \left (\kappa
(\theta , z ) \frac{ \partial_z \psi(\theta ,z) \nabla z - \theta \partial_{\theta z} \psi(\theta ,z) \nabla z}{- \theta \partial_{\theta \theta } \psi(\theta ,z)}\right )  \,,
\end{multline*}
where $ \kappa^*$ is given by $ \kappa^*
(e ,z) = \kappa
(\hat{e}(e,z) , z )/ ( - \hat{e}(e,z) \partial_{\theta\theta} \psi(\hat{e}(e,z) ,z) $ (see Lemma~\ref{lem:thetae} for the definition of $\hat e$). 
Due to the $L^\infty$-bound on $\theta$ and Lemma~\ref{lem:thetae}, we  observe that $e$ is bounded, i.e., $e\in L^\infty(\Omega \times (0,T))$, a standard regularity result for parabolic quasilinear partial differential equations (see~\cite[Corollary~4.2(ii)]{trudinger}) provides that $e \in \C^{0,\alpha}(\ov \Omega \times [0,T])$ for some $\alpha>0$.  Note that the right-hand side of the previous equation is bounded in some suitable $L^p$-space.
By Interpolation we also observe from $z\in\C([0,T];\C^2(\Omega))\cap \C^1([0,T];\C(\Omega))$ that $ z \in C^{0,\alpha} (\ov \Omega \times [0,T])$ (see Lunardi~\cite[Example~1.1.8]{lunardi}). 
Since the inverse mapping $ (e,z) \mapsto \theta $ is even a $\C^1$-mapping (see Lemma~\ref{lem:thetae}), we find that also $ \theta \in \C^{0,\alpha} (\ov\Omega \times [0,T])$. 

\end{proof}

In the following, 
we argue by Schauder's fixed point theorem that there exists at least locally a solution to the regularized discretized system~\eqref{eq:reg}.
The proof is similar to the one in~\cite[Section~4.1]{elisabetta}, but on the space of continuous functions.
\begin{proposition}
There exists a solution to the system~\eqref{eq:reg} on $[0,T_n]$ for a fixed $T_n$.
\end{proposition}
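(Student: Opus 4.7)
The proof is a standard fixed-point argument that cascades Propositions~\ref{prop:z}, \ref{prop:A}, and \ref{prop:t}. The plan is to construct a map whose fixed points solve the full approximate system~\eqref{eq:reg}, and then invoke Schauder's fixed-point theorem. Fix $n$ and $\varepsilon$, and work on a time horizon $[0,T_n]$ to be selected. Choose constants $0<\underline{\theta}_* < \overline{\theta}_*$ with $\underline{\theta}_* \leq \theta_0^\varepsilon \leq \overline{\theta}_*/2$ pointwise, and introduce the convex, closed set
\begin{align*}
K := \left\{\theta\in \C(\ov\Omega\times[0,T_n]) \,:\, \underline{\theta}_* \leq \theta \leq \overline{\theta}_*,\ \theta(\cdot,0)=\theta_0^\varepsilon \right\} \,.
\end{align*}
Given $\theta\in K$, Proposition~\ref{prop:z} produces a unique $z[\theta] \in \C([0,T_n];\C^2(\ov\Omega))\cap \C^1([0,T_n];\C(\ov\Omega))$ solving~\eqref{innerreg}. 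Feeding $(\theta,z[\theta])$ into Proposition~\ref{prop:A} yields a unique $\f A[\theta]\in \C^1([0,T_n];Y_n)$ solving~\eqref{magneticreg}, and then Proposition~\ref{prop:t}, with data $(z[\theta],\f A[\theta])$, returns a unique $\hat\theta \in \C^{0,\alpha}(\ov\Omega\times[0,T_n])$ solving~\eqref{tempreg} with initial value $\theta_0^\varepsilon$. Set $\mathcal{T}(\theta) := \hat\theta$; any fixed point of $\mathcal{T}$ is a solution to the full system.

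Next I would verify the three hypotheses of Schauder. For the self-mapping property, the sub-/super-solution bounds derived in the proof of Proposition~\ref{prop:t} read $\hat\theta(t,\cdot) \in [\underline{\theta}(t),\overline{\theta}(t)]$ with $\overline\theta(t) = (\sup_\Omega \theta_0^\varepsilon +1)e^{ct}$ and a corresponding positive lower bound, where $c$ depends only on the data controlled uniformly by $K$ (since $\f A[\theta]$ and $z[\theta]$ are bounded in their respective spaces by quantities depending on $R := \overline\theta_*$ and $\varepsilon$). Choosing $T_n$ small enough ensures $[\underline{\theta}(t),\overline{\theta}(t)] \subset [\underline{\theta}_*,\overline{\theta}_*]$ for $t\in[0,T_n]$, so $\mathcal{T}(K)\subset K$. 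Compactness of $\mathcal{T}(K)$ in $\C(\ov\Omega\times[0,T_n])$ follows directly from the uniform $\C^{0,\alpha}$ bound in Proposition~\ref{prop:t} via the Arzelà–Ascoli theorem.

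The main obstacle is continuity of $\mathcal{T}$. Suppose $\theta_k\to\theta$ uniformly in $K$. The map $\theta\mapsto z[\theta]$ is continuous into $\C([0,T_n];\C^2(\ov\Omega))\cap \C^1([0,T_n];\C(\ov\Omega))$ via the Duhamel representation associated to $-\delta\Delta$ and the local Lipschitz continuity of $(\theta,z)\mapsto \psi_z^\varepsilon(\theta,z)/\tau^\varepsilon(\theta)$ on the relevant range. Given convergence of $(\theta_k,z[\theta_k])$, the Galerkin relation~\eqref{magneticreg} reduces to a linear ODE in the finite-dimensional space $Y_n$ whose coefficients converge uniformly, so $\f A[\theta_k]\to \f A[\theta]$ in $\C^1([0,T_n];Y_n)$ by standard continuous dependence for linear systems. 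For the final step, the uniform $\C^{0,\alpha}$ estimate from Proposition~\ref{prop:t} and Arzelà–Ascoli furnish a uniformly convergent subsequence $\hat\theta_{k_j}\to \tilde\theta$; passing to the limit in the (suitably reformulated) equation~\eqref{tempreg} and invoking the uniqueness supplied by Lemma~\ref{lem:comp} identifies $\tilde\theta=\hat\theta$. Since every subsequence has a sub-subsequence converging to the same limit, the full sequence satisfies $\hat\theta_k\to\hat\theta$ in $\C(\ov\Omega\times[0,T_n])$. Schauder's fixed-point theorem then yields a fixed point of $\mathcal{T}$, which is the desired solution of~\eqref{eq:reg} on $[0,T_n]$.
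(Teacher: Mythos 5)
Your proposal is correct and follows essentially the same route as the paper: the same cascade $z[\theta]\to\f A[\theta]\to\hat\theta$ through Propositions~\ref{prop:z}, \ref{prop:A}, \ref{prop:t}, compactness from the uniform $\C^{0,\alpha}$ bound, and Schauder's fixed-point theorem on a bounded convex set of continuous temperatures over a small time horizon. The only cosmetic differences are that the paper secures the self-mapping property via the estimate $\|\theta\|_{\C(\ov\Omega\times[0,t])}\leq t^\alpha\|\theta\|_{\C^{0,\alpha}(\ov\Omega\times[0,t])}$ on a ball $\mathbb{B}_R$ rather than via the sub-/super-solution bounds on a two-sidedly constrained set, and that you spell out the continuity of the composed map (subsequence extraction plus the uniqueness from Lemma~\ref{lem:comp}) where the paper merely asserts it.
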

\begin{proof}
Consider a small enough fixed  $T_n\in (0,T]$ (which will be specified later) and a fixed big enough constant $R> 2 \sup _ { \f x\in \Omega} \theta_0^\varepsilon$ and introduce the space 
$$ \mathbb{B}_R:= \{ \theta \in \C(\ov \Omega \times [0,T])  : \| \theta \|_{ \C(\ov \Omega \times [0,T]) }\leq R\}\,.$$
The fixed point map $\mathbb{S} :  \mathbb{B}_R \ra  \mathbb{B}_R$ is defined via: For given $\tet$ solve~\eqref{innerreg} according to~Proposition~\ref{prop:z} to get the solution~$z_{\tet}$, afterwards solve~\eqref{magneticreg} according to Proposition~\ref{prop:A} to get the solution~$\f A_{\tet}$ in the last step solve~\eqref{tempreg} according to~Proposition~\ref{prop:t}. 
The continuity of $\mathbb{S}$ follows directly from the three propositions. Since $ \C^{0,\alpha} (\ov\Omega \times [0,T])$ is compactly embedded into $\C(\ov\Omega \times [0,T])$, $\mathbb{S}$ is even compact. 
That $\mathbb{S}$ is well-defined on $ \mathbb{B}_R$, can be observed by the inequality $ \|\theta \|_{ \C(\ov \Omega \times [0,t]) } \leq t^\alpha \|  \theta \|_{ \C^{0,\alpha} (\ov \Omega \times [0,t]) }  $ for small enough $t$. 
 Schauder's fixed point theorem grants that $\mathbb{S} $ admits at least one solution. 

\end{proof}
In order to extend the solution to the whole time interval $[0,T]$, we need to prove suitable global \textit{a priori} estimates. This is done in the next section.


\subsection{\textit{A priori} estimates independent of first regularization\label{sec:firstapri}}
In the first step we derive \textit{a priori} estimates independent of $\delta$.  To remain the lucidity, we omit the dependence of the solutions on the regularization and discretization parameter $\delta$, $n$, and $\varepsilon$ in this step.

Note that in this step, the space of the Galerkin discretization of the Maxwell equation remains the same, i.e., $n $ is fixed.
In this case~\eqref{magneticreg} is just a linear ordinary differential equations in $\R^n$ for $\f A$. Since $\mu$ and $\sigma$ are bounded independently of $z$ and $\theta$, respectively, we may infer that 
\begin{align}
 \| \t \f A \|_{L^2(\R^n)} \leq c\,.\label{Aestdis}
\end{align}


Mimicking the energy estimate of the system, we test equation~\eqref{tempreg} with~$1$,~\eqref{magneticreg} with $\t \f A $ and add them up,
\begin{multline}
\t \int_\Omega e ( \theta _ \varepsilon, z _ \varepsilon ) \de \f x  =\\ \int_\Omega \frac{\varepsilon}{\theta^2_\varepsilon} + \delta ^{3/2} \tau(\theta ) | \Delta z|^2\de \f x - \int_ \Omega \frac{1}{\mu(z_\varepsilon)}  \curl \f A\cdot  \curl \t \f A  \de \f x
- \int_{D/\Omega} \sigma_{\out} | \t \f A|^2 \de \f x + \int_\Sigma \f J_s \cdot \t \f A \de \f x 
\,.\label{energyregdis}
\end{multline}
Note that $ \int_\Omega \di (( \kappa ( \theta_\varepsilon,z_\varepsilon ) + \varepsilon \theta^2 )\nabla \theta_\varepsilon ) \de \f x = \int_{\partial\Omega} \f n \cdot ( \kappa ( \theta_\varepsilon,z_\varepsilon ) + \varepsilon \theta^2 )\nabla \theta_\varepsilon  \de S =0 $ such that the contribution due to the second term in equation~\eqref{tempreg} vanishes.

From the definition of the energy function $e$ and~\eqref{innerreg}, we observe
\begin{align*}
\t e ( \theta ,z) = \psi_z(\theta ,z) \t z - \theta \t \psi_{\theta} ( \theta ,z) = - \theta \t \psi_{\theta} ( \theta ,z) - \tau(\theta ) | \t z|^2 + \delta \tau(\theta) \t z \Delta z \,.
\end{align*}
Testing now equations~\eqref{tempreg} with $-1/\theta $ and integrating-by-parts, we may infer
\begin{multline}
\t \int_\Omega \psi_{\theta} ( \theta ,z) \de \f x + \int_\Omega \kappa_\varepsilon ( \theta ,z) | \nabla \log \theta |^2 + \frac{\varepsilon}{\theta^3} + \sigma ( \theta ) \frac{| \t \f A |^2}{\theta }+ \delta ^{3/2} \tau(\theta ) \frac{| \Delta z|^2 }{\theta} +\tau(\theta) \frac{| \t z|^2}{\theta}\de \f x \\
=   \int_\Omega  \tau(\theta ) \frac{\t z }{\sqrt \theta } \frac{\delta \Delta z}{\sqrt{\theta}} \de \f x .\label{entropydisreg}
\end{multline}
Testing~\eqref{innerreg} with $\t z $, integrating by parts and estimating by Young's inequality implies
\begin{align}
\frac{1}{2}\| \t z \|_{ L^2(L^2)} ^2 + \delta \| \nabla z\|_{L^\infty(L^2)} ^2  \leq  \int_0^T \int_\Omega|\psi_z(\theta ,z) |^2 \de \f x \de t \leq C \,.\label{tzdis}
\end{align}
Taking the gradient with respect to the spatial variables  in~\eqref{innerreg} implies
\begin{align*}
\t \nabla z -\delta \Delta \nabla z + \frac{\theta \tau'(\theta)\nabla \log \theta }{\tau^2(\theta )} \psi_z (\theta ,z) -\frac{1}{\tau (\theta)} \theta \psi_{z\theta} (\theta , z) \nabla \log \theta - \frac{1}{\tau(\theta)} \psi_{zz}(\theta , z) \nabla z \,,
\end{align*}
such that testing with $\nabla z$, we find with Gronwall's estimate and Assumption~\eqref{Asume:1} that 
\begin{align}
\| z\|_{L^\infty(W^{1,2})} ^2 + \delta \| \Delta z\|_{L^2(L^2)}^2 \leq c ( \| z_0 \|_{W^{1,2}} + \| \nabla  \log \theta \|_{L^2(L^2)}^2)\,.\label{zesti}
\end{align}

Choosing now a constant $ \gamma$ big enough, we multiply~\eqref{energyregdis} by $\gamma$ add~\eqref{entropydisreg}, add additionally~\eqref{zesti} multiplied by $1/\gamma $ and integrate in time.
Then every term on the right-hand side of the resulting equation has to be controlled by a term on the left-hand side. 
All terms on right-hand side of~\eqref{energyregdis} depending on $\t \f A$ are bounded due to~\eqref{estimate},~\eqref{zesti}, and~\eqref{Aestdis}. We remark that all norms of $\t \f A$ are equivalent in this stadium of the Galerkin approximation. 
The first term on the right-hand side of~\eqref{energyregdis} can be absorbed on the left-hand side of~\eqref{entropydisreg} by Young's inequality
$\varepsilon \theta^{-2} \leq \varepsilon  \theta^{-3}/(2\gamma ) + \varepsilon C $. \label{page:est}

Young's inequality allows to estimate the right-hand side of~\eqref{entropydisreg} and absorb both terms  in the left hand side of the same equation. Note that $ \delta^2 \leq \delta^{3/2}$ for $\delta \in [0,1]$. 
The second term on the right-hand side of~\eqref{zesti} may be absorbed on the left-hand side of~\eqref{entropydisreg} for $\gamma $ big enough. 
As a last point, we observe that due to Lemma~\ref{lem:thetae} $\psi_\theta (\theta,z) \geq -c ( \log \theta + 1 ) \geq -c (\theta +1 ) $ (for $ \theta \leq 1 $, it holds $ - \log \theta \geq 0$), which may be absorbed (again due to Lemma~\ref{lem:thetae}) by the first term on the left-hand side of~\eqref{energyregdis} due to the internal energy for $\gamma$ big enough. 
Finally, the second term on the right-hand side of~\eqref{energyregdis} can be estimated by the second term on the left-hand side of~\eqref{zesti} for $\delta $ small enough.

We may observe from the left-hand side of~\eqref{energyregdis}
\begin{align}
\| e( \theta,z) \|_{L^\infty(L^1)}  \leq c \,\label{edis}
\end{align}
such that due to Lemma~\ref{lem:thetae}, we conclude
\begin{align}
\| \theta \|_{L^\infty(L^1)}  \leq c \,.\label{thetdis}
\end{align}

From the left-hand side of~\eqref{entropydisreg}, we infer the estimates
\begin{multline}
\| \psi_ \theta ( \theta ,z) )  \| _{ L^\infty ( \f L^1)} + \|  \nabla \log \theta  \| _{ L^2 (  L^{2})}^2 + \varepsilon \| \nabla \theta^{p/2}\|_{ L^2(L^2)}^2 + \left \| \frac{\partial_t \f A  }{\sqrt{\theta}}\right \|_{L^2(\f L^2)}^2 + \left \| \frac{\partial _t z}{\sqrt{\theta}}\right \|_{L^2 ( \f L^2)}^2\\+  \varepsilon \left \| \frac{1}{\theta}\right \|_{L^3(L^3)} ^3+\delta^{3/2} \left\|  \frac{\Delta z }{\sqrt \theta }\right \| ^2 _{L^2(L^2)}\leq c \,\label{estimate}
\end{multline}
From Lemma~\ref{lem:thetae} and the regularizing term in $\kappa_\varepsilon$, we may infer that
\begin{align*}
\| \log \theta \|_{L^\infty(L^1)}+ \varepsilon\| \nabla \theta \|_{L^2(L^2)}^2 \leq c \,.
\end{align*}
Note that the estimate of $\nabla \theta$ depends on~$\varepsilon$ in this step. In Section~\ref{sec:weakss} we succeed to deduce a similar estimate independent of $\varepsilon$. 

From the internal energy balance, we observe
by comparison  since $ | \t \f A|^2$ is bounded in $  L^1$  that 
\begin{align*}
\| \t e \|_{ L^1 ( W ^{-1,q} )} \leq c \qquad \text{with } q \in ( 1, 3/2)\,.
\end{align*}

The chain rule implies
\begin{align*}
\n e ( \theta , z) = \n ( \psi( \theta , z) - \theta  \psi_\theta ( \theta ,z) ) = \psi _z ( \theta , z) \n z - \theta  \psi_{\theta ,\theta} ( \theta ,z) \n \theta -\theta \psi_{\theta,z} ( \theta ,z) \n z   \,
\end{align*}
such that we can estimate, using the properties of the free energy~$\psi$ (see~\eqref{heatcapacity}),
\begin{align*}
\|\n e (  \theta ,z) \| _{ L^2(L^2)} \leq c (\|\nabla z \|_{ L^2( L^2)} + \| \nabla \theta\|_{ L^2 ( L^2)} ) \leq c  \,.
\end{align*}

From the entropy estimate~\eqref{estimate}, we find that $\{1/\theta_\delta  \}$ is bounded in $L^3(\Omega \times (0,T))$ and from~\eqref{thetdis} $ \{\theta\} $ is bounded in $ L^\infty(0,T;L^1(\Omega))$. Together with Lemma~\ref{lem:thetae}, we find that $\{ \psi_\theta ( \theta_\delta , z_\delta) \}$ is bounded in any $L^p(\Omega \times (0,T)) $ for $p\in (1,\infty)$. 


 \subsection{Convergence for vanishing first regularization\label{sec:convfirst}}
By standard arguments, we infer the following weak$^*$ convergences
\begin{align}
\theta _\delta & \rightharpoonup \theta \quad \text{in } L^2 ( 0,T ; W^{1,2}(\Omega))\,,  \\
z _ \delta & \stackrel{*}{\rightharpoonup} z \quad \text{in } W^{1,2}( 0,T; L^2(\Omega)) \cap L^\infty (0,T ; W^{1,2}(\Omega))\,,
\\
 \f A _\delta & \rightharpoonup   \f A \quad \text{in } W^{1,2}(0,T;\R^n) \,, \label{w:Adis}
\\
e_\delta & \rightharpoonup e \quad \text{in } L^2(0,T; W^{1,2}(\Omega) )  \,,
\\
\log \theta_\delta & \rightharpoonup  \eta \quad \text{in } L^2 ( 0,T; H^1(\Omega))  \,,
\\
\t e_\delta  & \stackrel{*}{\rightharpoonup} \t e \quad \text{in } \mathcal{M}([0,T];W^{-1,q}(\Omega)) \,,
\\
\log \theta_\delta &\stackrel{*}{\rightharpoonup}  \eta \quad \text{in } L^\infty ( 0,T;\mathcal{M}(\ov\Omega))
\,.
\end{align}
The Lions--Aubin lemma (see~\cite[Cor.~7.9]{roubicek} or~\cite[Thm.~3.22]{BV}) grants that
\begin{align}
z _{\delta} & \ra z \quad \text{in } L^2(0,T;L^2(\Omega)) \,,\label{strongzdis}
\\
\f A_\delta & \ra \f A \quad \text{in } L^2 (0,T ; \R^n) \,,\label{strongAdis}
\\
e_\delta & \ra e \quad \text{in } L^2 ( 0,T ; L^{2}(\Omega))    \,.\label{strongedis}
\end{align}
To infer the strong convergence of $\theta_\delta$, we observe that the mapping $ \theta \mapsto e ( \theta,z) = \psi(\theta,z) - \theta \psi_\theta ( \theta ,z) $ has a continuous bounded inverse (see Lemma~\ref{lem:thetae}).   
Due to~\eqref{strongedis}, we can extract a subsequence that converges a.e. in $\Omega \times (0,T)$ and  a dominating function in $L^2(0,T; L^{2}(\Omega))$  to the sequence $\{ e_\delta\}$ (see~\cite[Theorem~4.9]{brezisbook}). 
With Lemma~\ref{lem:thetae}, we can find a dominating function for the sequence~$\{ \theta_\delta\}$. 
We may express 
$ \theta_\delta =  \hat e( e_\delta, z_\delta) $ (see Lemma~\ref{lem:thetae} for the definition of $\hat e$) such that the point-wise strong convergence of the temperatures follows from the continuity of the inverse function $\hat e$. The dominating function, Lemma~\ref{lem:thetae}, together with Lebesgue's theorem on dominated convergence implies
\begin{align}
\theta_{\delta} =\hat e  (e_ \delta , z_\delta)&\ra \hat e (e,z) = \theta \quad \text{in }L^2(0,T; L^{2}(\Omega)) \,.\label{strongtempdis}
\end{align}

The continuity of the logarithm and $\psi_\theta$
allows to identify
\begin{align}
\log \theta_\delta & \ra  \log \theta  \quad\text{and}\quad \psi_\theta (\theta_\delta , z_\delta ) \ra \psi_\theta ( \theta ,z) \quad  \text{in } L^2 ( 0,T; L^2(\Omega)) \,.\label{stronglogdis}
\end{align}

 Together, we may infer different weak convergences
 \begin{align}
 \begin{split}
 \frac{\sqrt{\sigma(\theta_\delta)}}{\sqrt{\theta_\delta}} \t \f A_\delta & \rightharpoonup  \frac{\sqrt{\sigma(\theta)}}{\sqrt{\theta}} \t \f A  \quad \text{in } L^2  (0,T; \f L^2(\Omega))\,, \\
 \sqrt{\kappa(\theta_\delta , z_\delta)} \nabla \log \theta_\delta & \rightharpoonup   \sqrt{\kappa(\theta , z)} \nabla \log \theta  \quad \text{in } L^2  (0,T; \f L^2(\Omega))\,, \\ 
 \sqrt{\tau(\theta_\delta)} \partial_t z_\delta  & \rightharpoonup   \sqrt{\tau(\theta)} \partial_t z  \quad \text{in } L^2  (0,T; \f L^2(\Omega))\,,
 \end{split}\label{nonlinweakdis}
 \end{align}
where we were able to identify the nonlinear limits  due to the strong convergences~\eqref{strongzdis} and~\eqref{strongtempdis}. 


Testing now~\eqref{tempreg} with~$- \vartheta / \theta_\delta$, where $ \vartheta \in \C^\infty  ( \ov  \Omega \times [0,T])$  with $ \vartheta \geq 0$, we observe that the regularized version of the entropy inequality holds. 
\begin{multline}
\left .\int_\Omega \psi_\theta (\theta_\delta , z_\delta) \vartheta \de \f x \right |_0^t \\+ \int_0^t \int_\Omega \vartheta \left ( \kappa_\varepsilon(\theta_\delta,z_\delta) |\nabla \log \theta _\delta|^2 + \sigma (\theta_\delta ) \frac{ | \t \f A _\delta|^2}{\theta_\delta }+ \tau(\theta _\delta )\frac{|\t z_\delta|^2 }{\theta_\delta } + \frac{\varepsilon}{\theta_\delta  ^3}+ \delta^{3/2}\tau(\theta_\delta)  \frac{|\Delta z_\delta|^2 }{\theta _\delta  }\right )  \de \f x \de t  \\
- \int_0^t \int_\Omega \kappa_\varepsilon(\theta_\delta ,z_\delta) \nabla \log \theta_\delta \nabla \vartheta \de \f x \de t \leq \int_0^t \int_\Omega \psi_\theta ( \theta_\delta ,z_\delta) \t \vartheta  + 
\delta \tau(\theta_\delta ) \frac{\t z_\delta \Delta z _\delta  }{\theta _\delta}\de \f x \de t \,\label{entropiedis}
\end{multline}
for a.\,e. $t\in[0,T]$. 
For the term $ \delta \tau(\theta_\delta )( \t z _\delta\Delta z_\delta  ) / \theta_\delta $, we observe form~\eqref{estimate} that
\begin{align*}
 \delta \int_0^T \int_\Omega \tau(\theta_\delta )\frac{ \t z_\delta \Delta z_\delta  }{ \theta_\delta}\de \f x \de t  \leq{}& \delta ^{1/4} \int_0^T \int_\Omega\tau(\theta_\delta) \left ( \sqrt{\frac{| \t z_\delta|^2 }{\theta_\delta }} \sqrt{\frac{\delta^{3/2}|\Delta z_\delta|^2 }{\theta_\delta }} \right )\de \f x \de t \\ \leq{}& \delta^{1/4} c \left \| \frac{\t z_\delta }{\sqrt{\theta_\delta}}\right \|_{L^2(\Omega\times (0,T))} \left  \| \frac{\delta^{3/4} \Delta z_\delta }{\sqrt{\theta_\delta }}\right \|_{L^2(\Omega\times (0,T))}  \ra 0 \quad \text{as }\delta \ra 0 \,.
\end{align*}
The last $\delta$-dependent term $ \delta^{3/2}\tau(\theta_\delta){|\Delta z_\delta|^2 }/{\theta_\delta }$ may be estimated from below by zero. 
For the remaining terms in the first line of~\eqref{entropiedis} except the first one, converge due to
 the weak-lower semi-continuity of convex functions (\textit{cf}.~\cite[Thm.~10.20]{singular} or~\cite{ioffe}).

To go to the limit in the entropy term $\psi_\theta$ in the first line of~\eqref{entropyregularized}, we first observe that 
$\{ \sqrt{\theta_\delta}\}$ converges strongly in $L^1(\Omega\times (0,T))$ (this follows from Vitali's theorem due to the point-wise strong convergence and weak-compactness in $L^1(\Omega\times (0,T))$ (see~\cite{vitali} and \cite[Thm.~1.4.5]{roubicekmeasure}))
 such that we may find a dominating function
 $h \in L^1(\Omega\times (0,T))$ of this sequence~\cite[Thm.~4.2]{brezisbook}. With Lemma~\eqref{lem:thetae}, we may bound the entropy from below by 
\begin{align*}
\psi_\theta ( \theta_\delta(t),z _\delta  (t)) \geq - c (\log \theta_\delta(t)-1) \geq - c( \sqrt{\theta_\delta (t)}-1) \geq - c (h(t)-1)\,.
\end{align*}
Due to the almost everywhere convergence, we may go to the limit in $ \psi_\theta (\theta_\delta(t) , z_\delta(t))$ by Fatou's lemma (see~\cite{fatou}). Since $ \psi_\theta (\theta_\delta(t)  , z_\delta (t))+c h(t)$  is a positive function, we observe
\begin{align*}
\int_{\Omega} \psi_\theta ( \theta (t) , z (t)) \de \f x \leq \liminf_{\delta\ra 0} \int_\Omega  \psi_\theta ( \theta_\delta (t) , z_\delta  (t)) \de \f x\quad\text{a.e.~in }(0,T)
\,.
\end{align*}

In the limit $\delta \ra 0$, we observe that the regularized version of the entropy inequality holds,
\begin{multline}
\left .\int_\Omega \psi_\theta (\theta , z) \vartheta \de \f x \right |_0^t + \int_0^t \int_\Omega \vartheta \left ( \kappa_\varepsilon(\theta,z) |\nabla \log \theta |^2 + \sigma (\theta ) \frac{ | \t \f A |^2}{\theta }+ \tau(\theta)\frac{|\t z|^2 }{\theta } + \frac{\varepsilon}{\theta^3}\right ) \de \f x \de t  \\
- \int_0^t \int_\Omega \kappa_\varepsilon(\theta ,z) \nabla \log \theta \nabla \vartheta \de \f x \de t \leq \int_0^t \int_\Omega \psi_\theta ( \theta ,z) \t \vartheta \de \f x \de t \,\label{entropiedis2}
\end{multline}
for a.\,e. $t\in(0,T)$. 
The boundedness due to~\eqref{estimate} shows that we can find  a  measure~$\nu\in \M(  \Omega  \times (0,T) )$ such that 
 \begin{align}
\int_0^T \int_\Omega  \t \psi_\theta (\theta ,z)  \vartheta \de \f x \de t   +\int_0^T \int_\Omega \vartheta  \de  \nu
 -\int _0^T \int_\Omega   \kappa _\varepsilon ( \theta ,z) \nabla \log \theta \cdot \nabla \log \vartheta \de \f x \de t   = 0\,\label{measureinterpretation}
 \end{align}
 in the sense of distributions, i.e., for all $\vartheta  \in \C^\infty_c( \Omega \times (0,T)) $, where the inequality 
 \begin{align*}
 \int_0^T \int_\Omega \vartheta \de  \nu \geq  \int_0^T \int_\Omega \vartheta\left (  \kappa_\varepsilon( \theta , z) | \nabla \log \theta |^2 + \sigma (\theta) \frac{| \t \f A |^2}{  \theta} + \tau(\theta ) \frac{| \t z|^2 }{ \theta}+ \frac{\varepsilon}{\theta^3} \right ) \de \f x \de t 
\end{align*}  
  holds for all $ \vartheta \in \C^\infty_c (  \Omega \times (0,T))$  with $ \vartheta \geq 0$. Note that $ \delta^{3/2} \tau(\theta_\delta) | \Delta z_\delta|^2 /\theta_\delta \geq 0 $ for all $ \delta \geq 0$. 

The convergences~\eqref{w:Adis} and~\eqref{strongAdis} allow to go to the limit in the Galerkin approximation of Maxwell's equation
\begin{align}
\int_0^T \int_D \sigma (\theta)  \t \f A\cdot \f w  +  \frac{1}{\mu(z)} \curl \f A \cdot \curl \f w \de x\de t ={}& \int_0^T \int_D \f J_s \cdot \f w \de \f x \de t  \,,\quad \f A(0) = P_n \f A_0\,,  \label{magneticdis}\,
\end{align}
for all $ \f w \in L^2(0,T; Y_n)$. 
Testing now~\eqref{magneticreg} for the limit $\f A$ subtracted from~\eqref{magneticreg}  for $\f A_\delta$  with $ ( \t \f A_\delta  - \t \f A ) $, we observe strong convergence of the time derivative  in $L^2$, i.e.,
\begin{align*}
\int_0^T \int_D  \sigma (\theta_\delta  ) | \t \f A_ \delta - \t \f A |^2 \de \f x \de t ={}& - \int_0^T \int_D \left (  \frac{1}{\mu(z_\delta) } \curl \f A _\delta  - \frac{1}{\mu(z)} \curl \f A \right ) \cdot \left ( \curl \t \f A_\delta - \curl  \t \f A \right  )   \de \f x \de t
\\
&- \int_0^T \int_D ( \sigma (\theta _\delta) -\sigma(\theta) )  \t \f A \cdot \left ( \t \f A_\delta - \t \f A\right ) \de \f x \de t
 \,.
\end{align*}
Due to~\eqref{w:Adis},~\eqref{strongtempdis}, and~\eqref{strongAdis} and the equivalence of the norms on the discrete space $Y_n$, the right-hand side converges. From~\eqref{zesti}, we also find $ \delta^{3/2}\int_0^T\int_\Omega\tau(\theta_\delta) | \Delta z_\delta |^2\de \f x \de t = \sqrt{\delta} c ( \delta \| \Delta z_\delta\|_{L^2(\Omega \times (0,T))}^2 )  \ra 0$ as $ \delta \ra 0$. 
Such that  going to the limit in~\eqref{tempreg} (in the weak formulation) implies the equality 
\begin{align}
- \int_0^T\int_\Omega e(\theta,z) \t \zeta  \de \f x \de t + \int_0^T \int _\Omega \kappa_\varepsilon (\theta ,z) \nabla \theta \cdot \nabla \zeta \de \f x \de t  =\int_0^T \int_\Omega \sigma (\theta) | \t \f A|^2 \zeta + \frac{\varepsilon}{\theta ^2}\zeta \de \f x\de t \,\label{energyreg}
\end{align}
for all $ \zeta \in \C_c^\infty(\Omega \times (0,T))$. 
Note that $ 1 / \theta^2 _\delta \ra 1 / \theta^2 $ in $L^1$ by Vitali's theorem (cf.~\cite{vitali})  since the sequence is relatively weakly compact~\cite[1.4.5]{roubicekmeasure}%
~(see~\eqref{estimate}) and $ \theta_\delta $ converges strongly point wise in $\Omega \times (0,T)$ (see~\eqref{strongtempdis}). 

Due to~\eqref{zesti}, we infer
\begin{align*}
\int_0^T \int_\Omega \sqrt\delta \tau(\theta_\delta) \delta   \sqrt{\delta}\Delta z_\delta    y \de \f x \de t \leq c \sqrt\delta \left ( \sqrt\delta  \| \Delta z_\delta \|_{L^2(L^2)}\right ) \| y \| _{L^2(L^2)} \ra 0 \quad \text{as }\delta \ra 0
\end{align*}
Such that the limit of~\eqref{innerreg} as $\delta \ra 0$ is given by 
\begin{align}
\tau(\theta) \t z  + {\partial _z \psi (\theta , z )} = {}& 0\text{ a.e.~in }\Omega \times (0,T)\quad
 z(0) = z_0^\varepsilon  \label{innerdis}\,.
\end{align}
The solution $( \theta_{n\varepsilon} , z_{n\varepsilon} , \f A_{n\varepsilon})$ fulfills in this step the system~\eqref{entropiedis}--\eqref{innerdis}.

\subsection{\textit{A priori} estimates independent of Galerkin approximation and second regularization\label{sec:apri2}}
First, we have to perform the limit procedure in $n$, the index of the Galerkin discretization, and then in $\varepsilon$, the regularization parameter. Since both limit procedures are very similar, we perform the essential estimates and convergence arguments only ones. 
The difference to the previous step is, that we have now the $L^\infty$-estimate of the time derivative of $z$ at our disposal, i.e., comparison in~\eqref{innerdis} grants that
(compare to~\cite[Lemma~2.5]{surfacehardening})
\begin{align}
\| z \|_{ W^{1,\infty} (L^\infty(\Omega))}  \leq c \,. \label{esttz}
\end{align}
From~\eqref{zesti}, we deduce that the $\delta$-independent term on the left-hand side remain bounded in $n$ and $\varepsilon$, i.e., 
$\| z \|_{L^\infty (W^{1,2}(\Omega)} \leq c $. 
We note that the estimate~\eqref{estimate} remains true, i.e., is independent of $n$ and  $\varepsilon$. 

Testing Maxwell's equation with $\partial_t  \f A  $ gives 
\begin{multline*}
\int_0^T \int_D \sigma(\theta) | \t \f A |^2 \de \f x \de t +\left . \int_D  \frac{1}{ \mu(z)} | \curl \f A |^2 \de \f x \right |_0^T =\\
- \int_0^T \int_\Omega \frac{\mu'(z)\t z }{\mu^2(z)} |\curl \f A |^2 \de \f x \de t+\int_0^T \int_\Sigma  \f J_s \cdot \t \f A  \de \f x \de t \,.
\end{multline*}
Note that $ \t \mu=0$ on $ D / \Omega$. 
Young's and Gronwall's inequality implies
\begin{align}
\| \f A  \|_{ L^\infty( \f H_{\cur})} ^2  + \int_0^T \int_D \sigma(\theta) | \t \f A|^2 \de \f x \de t  \leq  C\left ( 1 + \| \f J_s \|^2_{L^2(0,T;L^2(\Sigma))}\right )    e^{ \int_0^T \| \t \log \mu (z) \|_{L^\infty(\Omega)} \de t } \leq c \,,\label{Areg}
\end{align}
where the right-hand side is bounded due to the boundedness of $\mu$ and~\eqref{esttz}.
From the left-hand side we infer due to the boundedness from below of $\sigma$ on $\Omega\cap \Sigma$ that
$ \| \t \f A \|_{L^2(0,T;L^2(\Omega\cap \Sigma))}\leq c $. 


Form the energy estimate~\eqref{energyregdis}, we get (since $\delta =0$)
\begin{multline}
\t \left ( \int_\Omega  e( \theta,z) \de \f x +\int_D \frac{1}{2\mu(z)} | \curl \f A |^2  \de \f x\right ) + \int_\Omega \frac{\mu'(z) \t z}{\mu^2(z)} |\curl \f A |^2 \de \f x + \int_{D/\Omega} \sigma_{\out} | \t \f A|^2 \de \f x   \\
=\int_D \f J_s \cdot \t \f A \de \f x +   \int_\Omega \frac{\varepsilon}{\theta^2} \de \f x  \,.\label{energydis}
\end{multline}
Multiplying by big enough constant $\gamma$ and adding the inequality~\eqref{entropiedis2}, we observe the boundedness of the right-hand side in a similar way as on page~\pageref{page:est} by absorbing the last term on the right-hand side of~\eqref{energydis} into the left hand side of~\eqref{entropiedis2}. 
This implies together  with~\eqref{esttz} and~\eqref{Areg} that
\begin{align}
\| e( \theta,z) \|_{L^\infty(L^1)} + \| \f A \|_{L^\infty(\f H_{\cur})}^2  \leq c \,.\label{ereg}
\end{align}
Additionally, the estimates of~\eqref{estimate} remain true (despite the last $\delta$-term). 


 We observe by comparison in the entropy production rate~\eqref{measureinterpretation}
that 
\begin{align*}
\| \t \psi_\theta (\theta , z) \|_{\mathcal{M}([0,T];(W^{-1,r})} \leq c \quad \text{for }r <3/2\,.
\end{align*}
Using again the chain rule, we may observe that (compare to Lemma~\ref{lem:thetae})
\begin{align*}
\nabla \psi_\theta (\theta ,z) = -\theta \psi_{\theta\theta} (\theta , z ) (-\nabla \log \theta ) + \psi_{\theta z} (\theta , z) \nabla z \,.
\end{align*}
With~\eqref{heatcapacity},~\eqref{estimate}, and~\eqref{zesti}, we conclude \begin{align*}
\| \nabla \psi _\theta (\theta ,z) \|_{L^2(L^{2})} \leq c \,.
\end{align*}

\subsection{Convergence of the approximate system\label{sec:conv2}}

By standard arguments, we infer the following weak$^*$ convergences
\begin{align}
%
%
%
%
z _ \varepsilon & \stackrel{*}{\rightharpoonup} z \quad \text{in } W^{1,\infty}( 0,T; L^\infty(\Omega)) \cap L^\infty (0,T ; W^{1,2}(\Omega))\,, \label{w:z}
\\
 \f A _\varepsilon & \stackrel{*}{\rightharpoonup}   \f A \quad \text{in } L^ \infty( 0,T; \f H_{\cur})
   \,, \label{w:A}
%
%
%
%
%
%
%
%
%
%
\\
\log \theta_\varepsilon &\stackrel{*}{\rightharpoonup}  \eta \quad \text{in } L^2 ( 0,T; H^1(\Omega))\cap L^\infty(0,T; \mathcal{M}(\ov\Omega
))  \,,\label{w:log}
\\
\psi_\theta (\theta_\varepsilon  ,z_\varepsilon)   &\stackrel{*}{\rightharpoonup}  \zeta \quad \text{in } L^2 ( 0,T; H^1(\Omega))\cap 
L^\infty(0,T;\mathcal{M}(\ov\Omega)) \,,
\\
%
%
%
%
%
%
%
%
%
\t \psi_\theta (\theta_\varepsilon , z_\varepsilon)  &\stackrel{*}{\rightharpoonup} \t \psi_\theta (\theta , z) \quad \text{in } \mathcal{M}([0,T];W^{-1,r}(\Omega)) \text{ for }r<3/2\,,
\\
\sqrt{\sigma_\varepsilon (\theta_\varepsilon) } \t \f A_\varepsilon & \rightharpoonup b  \quad \text{in } L^2(0,T; L^2(D ))\,,\label{sigtA}
\\
\t \f A_\varepsilon & \rightharpoonup \t \f  A   \quad \text{in } L^2(0,T; L^2(\Omega\cap \Sigma  ))\label{tAeps}
\,.
%
%
\end{align}
The Lions--Aubin lemma (see~\cite[Cor.~7.9]{roubicek} or~\cite[Thm.~3.22]{BV}) grants that 
\begin{align}
z _{\varepsilon} & \ra z \quad \text{in } L^p(0,T;L^p(\Omega)) \text{ for all }p\in [1,\infty)\,,\label{strongz}
\\
\f A_\varepsilon & \ra \f A \quad \text{in } L^2 (0,T ; \f L^2(\Omega\cap \Sigma )) \,,\label{strongA}
%
%
%
%
%
%
%
\\
 \psi_\theta (\theta_\varepsilon  ,z_\varepsilon) &\ra \zeta  \quad \text{in } L^2(0,T;L^2(\Omega) )\,. \label{psithetastrong}
\end{align}

To infer the strong convergence of $\log \theta_\varepsilon$, we observe that the mapping $\log  \theta \mapsto \partial _\theta \psi ( \theta,z)  $ has a continuous bounded inverse (see Lemma~\ref{lem:thetae}).   
Due to~\eqref{psithetastrong}, we can extract a subsequence that converges a.e. in $\Omega \times (0,T)$ and  a dominating function in $L^2(0,T; L^{2}(\Omega))$  to the sequence $\{ s_\varepsilon \} $ with $ s_\varepsilon :=\partial_\theta \psi(\theta_\varepsilon , z_\varepsilon)$ (see~\cite[Theorem~4.9]{brezisbook}). 
With Lemma~\ref{lem:thetae}, we may find a dominating function for the sequence~$\{ \log \theta_\varepsilon\}$. 
Expressing $\log \theta_\varepsilon $ via $ \log \theta_\varepsilon =  \hat s( s_\varepsilon, z_\varepsilon) $ (see Lemma~\ref{lem:thetae} for the definition of $\hat s$), the point-wise strong convergence of the logarithm of the temperatures follows from the continuity of the inverse function $\hat s$, the dominating function of $\{ s_\varepsilon\}$ together with Lebesgue's theorem on dominated convergence implies
%
%
%
%
%
%
%
%
\begin{align}
\log \theta_\varepsilon = \hat{s}(s_\varepsilon , z_\varepsilon )  & \ra \hat{s}(s,z) =   \log \theta  \text{ in } L^2 ( 0,T; L^2(\Omega)) \,\label{stronglog}
\end{align}
where $\theta$ is basically defined as the point-wise exponential of the limit. 

The continuity of $\psi_\theta$
allows to identify 
\begin{align}
 \psi_\theta (\theta_\varepsilon , z_\varepsilon )=\psi_\theta ( e^{\log \theta_\varepsilon}, z_\varepsilon)  \ra \psi_\theta ( \theta ,z)  \text{ in } L^2 ( 0,T; L^2(\Omega)) \,.\label{strongpsi}
\end{align}

 Together, we may infer different weak convergences
 \begin{align}
 \begin{split}
 \frac{\sqrt{\sigma(\theta_\varepsilon)}}{\sqrt{\theta_\varepsilon}} \t \f A_\varepsilon & \rightharpoonup  \frac{\sqrt{\sigma(\theta)}}{\sqrt{\theta}} \t \f A  \quad \text{in } L^2  (0,T; \f L^2(\Omega))\,, \\
 \sqrt{\kappa(\theta_\varepsilon , z_\varepsilon)} \nabla \log \theta_\varepsilon & \rightharpoonup   \sqrt{\kappa(\theta , z)} \nabla \log \theta  \quad \text{in } L^2  (0,T; \f L^2(\Omega))\,, \\ 
 \sqrt{\tau(\theta_\varepsilon)} \partial_t z_\varepsilon  & \rightharpoonup   \sqrt{\tau(\theta)} \partial_t z  \quad \text{in } L^2  (0,T; \f L^2(\Omega))\,,
 \end{split}\label{nonlinweak}
 \end{align}
where we were able to identify the nonlinear limits  due to the strong convergences~\eqref{strongz} and~\eqref{stronglog} and the weak convergences~\eqref{w:A},~\eqref{w:z}, and~\eqref{w:log} .  
From the fact that $\sigma_\varepsilon \ra 0$ in $D/(\Omega\cap\Sigma)$,~\eqref{sigtA},~\eqref{tAeps}, and~\eqref{stronglog}, we infer
\begin{align*}
\sigma_\varepsilon(\theta_\varepsilon) \t \f A_\varepsilon & \rightharpoonup  \sigma(\theta) \t \f A \quad \text{in } L^2  (0,T; \f L^2(D))\,.
\end{align*}
In the end, the positivity and continuity of the exponential function allows to conclude by Fatou's lemma:
\begin{align*}
0\leq \int_\Omega  \theta \de \f x  = \int_\Omega \exp( { \log \theta} )\de \f x \leq \liminf_{\varepsilon\ra 0}\int_\Omega \exp({\log \theta_\varepsilon}) \de \f x = \liminf_{\varepsilon\ra 0} \int_\Omega\theta_\varepsilon\de \f x \,.
\end{align*} 
Additionally, due to the positivity of $e$ and writing $ e ( \theta _\varepsilon , z_\varepsilon ) = e( \exp( \log \theta_\varepsilon) , z_\varepsilon)$, we find from the a.e.~convergence of $\log\theta_\varepsilon$ and Fatou's lemma (see Lemma~\ref{lem:thetae}) that
\begin{align}
\int_\Omega e(\theta(t) , z(t))\de \f x  \leq \liminf_{\varepsilon\ra 0} \int_\Omega e ( \theta_\varepsilon(t), z_\varepsilon(t))\de \f x  \label{weaklowen}
\end{align}
for a.\,e.~$t\in(0,T)$.

To be able to pass to the limit in the energy inequality~\eqref{energyequality}, we need to deduce strong convergence for $ \curl \f A_n$, since the sign of $\partial_z\mu(z)\t z $ is not known.
Note that we already established that the weak formulation of  Maxwell's equation holds, \textit{i.e.},~\eqref{weakA}.
Therefore, it is possible to subtract~\eqref{weakA} from~\eqref{magneticreg}. Since it is only possible to test this difference with appropriate test functions belonging to the discrete Galerkin space, we test with the difference $ \f A_n - P_n \f A$. This gives 
\begin{align*}
\int_0^T \int_D \left ( \sigma ( \theta_n )  \t \f A_{ n }  - \sigma( \theta ) \t \f A  )\cdot( \f A _n - P_n \f A \right ) + \left ( \frac{ 1}{\mu(z_n)} \curl \f A_n - \frac{1}{\mu(z) }\curl\f A \right )\cdot\left ( \curl \f A_n -\curl P_n \f A \right )\de \f x  = 0 \,.
\end{align*}
Some rearrangements provide the inequality:
\begin{multline*}
c \int_0^T  \int_\Omega | \curl \f A_n - \curl P_n \f A |^2 \de \f x \de t  \leq \int_0^T   \int_ \Omega \frac{1}{\mu(z_n)}| \curl \f A_n - \curl P_n \f A |^2 \de \f x \de t =\\  -\int_0^T  \int_{\Omega} \left (   \frac{1}{\mu(z_n) } \curl P_n \f A  - \frac{1}{\mu(z)} \curl P_n \f A + \frac{1}{\mu(z)} \curl P_n \f A - \frac{1}{\mu(z)} \curl \f A     \right ) \cdot \left ( \curl \f A_n - \curl P_n \f A \right ) \de \f x\de t  \\- \int_0^T \int_\Omega\left ( \sigma(\theta_\varepsilon)  \t \f A_\varepsilon  - \sigma( \theta )  \t \f A \right )\cdot \left ( \f A_n - P_n \f A \right ) \de \f x \de t  \,.
\end{multline*}
Due to~\eqref{strongz}, \eqref{stronglog},~\eqref{strongA}, and~\eqref{w:A}, the right-hand side of the previous equality converges to zero (note that $P_n \f A \ra \f A$ in $ L^2(0,T;\f H_{\cur})$) such that we infer
\begin{align}
\curl \f A_n & \ra \curl \f A \quad \text{in } L^2 (0,T ; \f L^2(D)) \,.\label{strongAgrad}
\end{align}
Similarly, without inserting the projection, we find the strong convergences as $ \varepsilon \ra 0$. 
From this, we may also infer some point-wise strong convergence and again, by Fatou's Lemma (see Lemma~\ref{lem:thetae}, we find
\begin{align}
   \left \| \frac{1}{\sqrt{2\mu(z(t))}}\curl \f A(t) \right \|_{L^2(D)}^2\leq \liminf_{\varepsilon\ra0} \left \| \frac{1}{\sqrt{2\mu(z_\varepsilon(t))}}\curl \f A_\varepsilon(t)\right  \|_{L^2(D)}^2\label{weaklowA}
\end{align}
for a.\,e.~$t\in(0,T)$.

In order to prove the energy inequality, we find 
with the last bound in~\eqref{estimate} that
\begin{align*}
\int_{\Omega}\frac{\varepsilon}{\theta_\varepsilon^2  } \de \f x = \int_\Omega \varepsilon ^{1/3} \left (\frac{\varepsilon^{1/3}}{\theta_\varepsilon}\right )^2 \de \f x  \leq  \varepsilon ^{1/3} | \Omega|^{1/3} \left \| \frac{\varepsilon^{1/3}}{\theta_\varepsilon}\right \|_{L^3}^2 \ra 0 \text{ as } \varepsilon\ra 0\,,
\end{align*}
which proves that the second term on right-hand side of~\eqref{energydis} vanishes as $\varepsilon\ra 0$.
 Integrating equation~\eqref{energydis} in time, we find
\begin{multline}
\left . \left (\int_\Omega e( \theta_\varepsilon,z_\varepsilon) \de \f x +\int_D  \frac{1}{2\mu(z)} | \curl \f A _\varepsilon|^2  \de \f x\right ) \right |_0^t+ \int_0^t   \int_\Omega \frac{\mu'(z_\varepsilon) \t z_\varepsilon}{\mu^2(z_\varepsilon)} |\curl \f A _ \varepsilon|^2 \de \f x \de t \\   + \int_0^t  \int_{D/\Omega} \sigma_\varepsilon| \t \f A_\varepsilon|^2\de \f x \de t   = \int_0^t \zeta \int_\Omega \frac{\varepsilon}{\theta_\varepsilon^2}\de \f x \de t+ \int_0^T \zeta \int_\Sigma  \f J_s \cdot \t \f A \de \f x \de t  \,
\end{multline}
for a.\,e.~$t\in(0,T)$.
The weak-lower semi-continuities~\eqref{weaklowen} and~\eqref{weaklowA}, the strong convergences~\eqref{strongz} and~\eqref{strongAgrad} as well as the weak convergence~\eqref{tAeps} allow to go to the limit in this equation and implies the energy inequality~\eqref{energyequality} a.e.~in $(0,T)$.


With the bound~\eqref{estimate}, we find  that 
\begin{align}
\begin{split}
\int_\Omega \varepsilon \theta^{p-1}_\varepsilon \nabla \theta_\varepsilon \cdot \nabla  \vartheta \de \f x \leq{}&  c \varepsilon^{1/2}  \| \theta_{\varepsilon}^{p/2} \|_{L^2(L^2)}  \varepsilon^{1/2} \| \nabla \theta_\varepsilon^{p/2}\|_{L^2 (L^2)} \| \nabla \vartheta\|_{L^\infty(L^\infty)}\\ \leq{}& c \varepsilon^{1/2}  \| \theta_{\varepsilon}^{p/2} \|_{L^2(W^{1,2})}^{1-2/(3p-1)} \| \theta _{\varepsilon} ^{p/2}\|_{L^\infty(L^{2/p})}^{2/(3p-1)} \varepsilon^{1/2} \| \nabla \theta_\varepsilon^{p/2}\|_{L^2 (L^2)} \| \nabla \vartheta\|_{L^\infty(L^\infty)}
\\ \leq{}& c \varepsilon^{1/(3p-1)} \left ( \varepsilon^{1-1/(3p-1)}  \| \theta_{\varepsilon}^{p/2} \|_{L^2(W^{1,2})}^{2-2/(3p-1)}\right ) \| \theta _{\varepsilon} \|_{L^\infty(L^{1})}^{p/(3p-1)} \| \nabla \vartheta\|_{L^\infty(L^\infty)}\,,
\end{split}\label{vanishreg}
\end{align}
where we used H\"older's inequality in the first line, Gagliardo-Nirenberg's inequality in the second estimate and just a rearrangement in the last one.
Since the term in the brackets on the right-hand side of the previous estimate is bounded due to~\eqref{estimate}, the right-hand side vanishes as $\varepsilon\ra 0$. 
Now we can conclude for the entropy inequality:
Testing equations~\eqref{tempreg} with $ -\vartheta/ \theta $ with a positive function $\vartheta\geq 0$, $\vartheta \in \C^1_c(\ov\Omega) \otimes \C^1 ([0,T])$, we find the inequality
%
\begin{align}
\begin{split}
\left . \int_\Omega \psi _{\theta }( \theta_\varepsilon , z_\varepsilon) \vartheta\de \f x \right |_0^t + &\int_0^t \int_\Omega \vartheta
 \left (  \kappa( \theta_\varepsilon , z_\varepsilon) | \nabla \log \theta _\varepsilon|^2 + \sigma( \theta_\varepsilon ) 
 \frac{| \t \f A _\varepsilon   |^2}{\theta _\varepsilon } + \tau(\theta ) \frac{(\t z_\varepsilon)^2}{\theta_\varepsilon} \right ) \de \f x \de s \\
& +\varepsilon \int _0^t \int_\Omega  \left ( | \nabla \theta ^{p/2} |^2  +  \frac{1}{\theta_\varepsilon^3}  \right )  
\vartheta  -  \theta_\varepsilon ^{p-1} \nabla \theta_\varepsilon \cdot \nabla \vartheta  \de \f x \de s \\
& - \int_0^t \int_\Omega \kappa(\theta _\varepsilon , z_\varepsilon ) \nabla \log \theta_\varepsilon \cdot \nabla \vartheta   \de \f x \de s  \leq  \int_0^t \int_\Omega \psi_\theta ( \theta_\varepsilon,z_\varepsilon ) \t \vartheta   \de \f x \de s 
\end{split}
\label{entropyregularized}
\end{align}
for a.\,e. $t\in (0,T)$ and $\vartheta\geq 0$, $\vartheta \in \C^1_c(\ov\Omega) \otimes \C^1 ([0,T])$.
%
%
Due to~\eqref{vanishreg}, the last term in the second line of~\eqref{entropyregularized} vanishes as $\varepsilon\ra 0$ the other two terms in the second line of~\eqref{entropyregularized} can be estimated from below by zero.  The convergences~\eqref{nonlinweak} and the weak-lower semi-continuity of convex functions (\textit{cf}.~\cite[Thm.~10.20]{singular} or~\cite{ioffe}) allows to go to the limit in the first line of~\eqref{entropyregularized} with exception of the first term in that line.
For  the term on the right-hand side of~\eqref{entropyregularized}, we observe strong convergence from~\eqref{stronglog}. 
Going to the limit in the entropy term  $\psi_\theta$ in the first line of~\eqref{entropyregularized} in done in the same way as in the delta limit on page~\pageref{entropiedis}.

Finally, we conclude 
 that the limit fulfills the entropy inequality~\eqref{entropy} 
and additionally, it fulfills  the measure-valued formulation~\eqref{measureinterpretation} with $\varepsilon=0$.

%

The weak convergences~\eqref{w:A} and the strong convergences~\eqref{strongtemp} and~\eqref{strongz} as well as the assumptions on $\sigma $ and $\mu$ allows to go to the limit in~\eqref{magneticreg} and attain the weak formulation~\eqref{weakA}. 
Note that during the last step, as $\varepsilon $ tends to zero, we simultaneously go to the limit in the regularization (denoted by $^\varepsilon$ in~\eqref{eq:reg}). Since these regularization converge in $\C$, the strong convergences~\eqref{strongz} and~\eqref{strongtemp} allows to identify the limits appropriately.

\subsection{Additional estimates in the case of more regularity of the free energy\label{sec:weakss}}
In this section, we prove the assertion of Remark~\ref{rem:add}. 
In an additional estimate, we want to test the energy equation with the a function $  - \theta ^{\alpha-1} $ for a $ \alpha\in (1/2,1)$.
We define $ C_\alpha( \theta,z) $ by 
$ C_\alpha( \theta,z) =-  \int_0^\theta r^\alpha \psi_{\theta ,\theta} (r , z ) \de r $
such that 
\begin{align*}
-\t C_{\alpha} ( \theta , z) ={}& \theta ^ \alpha \psi_{ \theta ,\theta} ( \theta , z)\partial_t \theta + \int_0^\theta r^\alpha \psi_{\theta ,\theta, z } (r , z ) \de r  \partial _t z\\ ={}& \theta^\alpha \partial_t \psi_\theta( \theta ,z) - \theta^\alpha \psi_{\theta ,z} ( \theta , z)  \t z + \int_0^\theta r^\alpha \psi_{\theta ,\theta, z } (r , z ) \de r  \partial _t z \,
\end{align*} 
and $C_\alpha (\theta ,z) \leq c(\theta^\alpha+1) \leq c ( \theta +1) $ is bounded by~\eqref{ereg}.
Note that $C_\alpha$ is positive due to the concavity of $\psi $ in the argument $\theta$. 
We observe for the energy equation tested with  $  - \theta ^{\alpha-1} $ that
\begin{multline}
\frac{4}{\alpha^2 } ( 1- \alpha) \int_0 ^T \int_\Omega | \nabla \theta ^{\alpha/2}|^2 \de \f x \de t+\frac{4\varepsilon }{(\alpha+2)^2 } ( 1- \alpha) \int_0 ^T \int_\Omega | \nabla \theta ^{(\alpha+2)/2}|^2 \de \f x \de t \\ + \int_0^T \int_\Omega \theta^{\alpha-1} \left ( \sigma(\theta) | \t \f A |^2 +\frac{ \tau(\theta)}{2}  | \partial _t z|^2+ \frac{\delta^{3/2}\tau(\theta)}{2}| \Delta z|^2 +\frac{\varepsilon}{\theta_\varepsilon^{\alpha-3}} \right ) \de \f x \de t  
\\ \leq \left .\int_\Omega C_{\alpha}(\theta , z) \de \f x \right |_0^T + \int_0^T \int_\Omega \left (\theta^\alpha \psi_{\theta ,z} ( \theta , z)  - \int_0^\theta r^\alpha \psi_{\theta ,\theta, z } (r , z ) \de r\right ) \t z \de \f x \de t 
\leq c  +c \int_0^T \int_\Omega {\theta^{\alpha/2}} ^p \de \f x \de t  
\,.\label{notunderweak}
\end{multline}
To observe that the right-hand side is bounded by $ c  +c \int_0^T \int_\Omega {\theta^{\alpha/2}} ^p \de \f x \de t  $,~\eqref{esttz} is again essential.
Additionally, we used 
\begin{align*}
\int_0^T\int_\Omega \theta^\alpha\psi_{\theta z} (\theta ,z) \de \f x \de t \leq{}& C \int_0^T\int_\Omega ( \sqrt{\theta } + \theta ) \psi_{\theta z} (\theta ,z) \de \f x \de t  \leq C
 \intertext{and}
\int_0^T\int_\Omega  \int_0^\theta r^\alpha \psi_{\theta ,\theta, z } (r , z ) \de r \de \f x \de t  \leq{}& C\int_0^T\int_\Omega\int_0^\theta r^{\alpha-{s}} \de r \de \f x \de t  \leq C\int_0^T\int_\Omega (\theta ^{\alpha -s} +1) \de \f x \de t\,.
\end{align*}
 Such that $p$ in~\eqref{notunderweak} is given by $p <2(\alpha + 2/3)/\alpha = 2 + 4/(3\alpha)$ (cf.~\eqref{psithetas}). 
The Gagliardo--Nirenberg inequality implies
\begin{align*}
\| \theta ^{\alpha/2}\|_{\f L^p(\Omega\times (0,T)) }^p \leq c(  \|  \nabla \theta^{\alpha/2} \|_{ L^2(\Omega\times (0,T))} ^ \beta + \| \theta^{\alpha/2}\|_{L^{2/\alpha}(\Omega\times (0,T))}^\beta  ) \| \theta ^{\alpha/2} \|_{L^{2/\alpha}(\Omega\times (0,T))}^{p-\beta}\, 
\end{align*}
for $  1/p = \beta / 6p   +(1-\beta / p)   \alpha /2 $ such that $ \beta = 3(p \alpha -2)/ ( 3 \alpha - 1) $ and for $p < 2 + 4/(3\alpha)$ we find that $ \beta < 2$.  
 Young's inequality implies 
\begin{align*}
C \| \theta ^{\alpha /2}\|_{L^p(\Omega\times (0,T))}^p \leq\frac{2(1-\alpha)}{\alpha^2} \| \nabla \theta^{\alpha/2} \|_{L^2(\Omega\times (0,T))}^2 +C  \| \theta^{\alpha /2} \|_{L^{2/\alpha}(\Omega\times (0,T))} ^ q \,, 
\end{align*}
where $q$ is chosen accordingly. We infer  that  the gradient term can be absorbed in the left-hand side of the inequality~\eqref{notunderweak}.
Note that $ \| \theta ^{ \alpha /2} \|_{ L^{2/\alpha}}^{2/\alpha} = \| \theta \|_{ L^1} $, which is already bounded due to the energy estimate and $ \| \theta \| _{ L^1} \leq c \| e \|_{ L^1}$ (compare with~Lemma~\ref{lem:thetae}). 
Together, we find
\begin{align*}
\| \theta ^{\alpha/2} \|_{L^2( W^{1,2})} \leq c \,.
\end{align*}
Note that these estimates are now independent of $\varepsilon$.

First, we observe that $ \theta^{\alpha/2} \in L^2(0,T;L^6(\Omega)) \cap L^\infty(0,T;L^{2/\alpha}(\Omega))$, which implies by an interpolation inequality that $\theta^{\alpha/2} \in L^{2(2+3\alpha)/3\alpha} (\Omega \times (0,T)) $ and thus
\begin{align*}
\theta \in L^p(\Omega \times (0,T)) \quad \text{for all } p\in [1,5/3)\,.
\end{align*} 


To get an estimate for $\nabla\theta$ itself, we use a calculation similar to~\cite{boccardo,tomassetti,malek}.
 We recall a version of the Gagliardo--Nirenberg inequality
\begin{align}
\| \theta \| _{ L^{3q/2} }^{3q/2} \leq c (\| \nabla \theta \| _{ L^q} ^ q + \| \theta \|_{ L^1}^q) \| \theta \|_{L^1}^{q/2}\,.\label{gag}
\end{align}
For $q$ such that $1\leq q <4/3$, there exists an $\alpha \in (1/2,1)$ such that $ (2-\alpha) /(2-q) = 3/2$. 
We may conclude
\begin{align*}
\| \nabla \theta \| _{ L^q(L^q)} ^q \leq{}& \left (\int_0^T \int_\Omega \frac{| \nabla \theta|^2}{\theta^{2-\alpha}} \de \f x \de t \right )^{q/2} \left ( \int_0^T \int_\Omega  \theta^{q(2-\alpha)/(2-q)} \de \f x \de t \right ) ^{(2-q)/2} \\
 ={}& \left (\frac{4}{\alpha^2}\right )^{q/2}  \| \nabla \theta^{\alpha/2}\|_{L^2(L^2)}^{q/2}  \| \theta\|_{L^{3q/2}}^{3q(2-q)/4}
\\ 
 \leq{}&c  \left  (\| \nabla \theta^{\alpha/2}\|_{L^2(L^2)} \| \theta \|_{L^\infty(L^1)}^{(2-q)/2}\right )^{q/2} (\| \nabla \theta \| _{ L^q(L^q)} ^ {q} + \| \theta \|_{ L^\infty(L^1)}^q)^{(2-q)/2}
 \\
 \leq {} & \frac{1}{2} \left (\| \nabla \theta \| _{ L^q(L^q)} ^ {q} + \| \theta \|_{ L^\infty(L^1)}^q\right )  + C  \left  (\| \nabla \theta^{\alpha/2}\|_{L^2(L^2)} \| \theta \|_{L^\infty(L^1)}^{(2-q)/2}\right )
 \,.
\end{align*}
The first of the above inequalities is due to H\"older's inequality, the equality is just the identification of the norm an the equality $ (2-\alpha) q/(2-q) = 3q/2$. The second inequality follows from Gagliardo--Nirenberg's inequality~\eqref{gag} and the last one due to Young's inequality $c ( a \cdot b) \leq( 1/2) a ^ {2/(2-q)} + \frac{1}{2}(c b )^{ 2/q} $.
We observe that the first term on the right-hand side of the last inequality can be absorbed in the left-hand side such that
\begin{align*}
\| \nabla \theta \| _{ L^q(L^q)}^q  \leq  C  \left  (\| \nabla \theta^{\alpha/2}\|_{L^2(L^2)} \| \theta \|_{L^\infty(L^1)}^{(2-q)/2}+ \| \theta \|_{ L^\infty(L^1)}^q\right ) \,.
\end{align*} 


%
Concerning the limit in~$n$, we observe
by comparison in~\eqref{energyreg}  with $ | \t \f A|^2$ being bounded in $  L^1$  that 
\begin{align}
\| \t e \|_{ \mathcal{M} ([0,T] W ^{-1,q} )} \leq c \qquad \text{with } q \in ( 1, 4/3)\,.\label{eesttime}
\end{align}
Since we are not able to establish strong convergence for $\{ \t \f A_n\}$ as $n \ra 0$ (unless $\partial_z \mu \equiv 0$),~\eqref{energyreg} cease to hold in the limit. It only holds that there exists a measure $\xi _\varepsilon\in \mathcal{M}^+(\Omega \times (0,T) $ such that
\begin{align}
- \int_0^T\int_\Omega e(\theta_\varepsilon,z_\varepsilon) \t \zeta  \de \f x \de t + \int_0^T \int _\Omega \kappa_\varepsilon (\theta_\varepsilon ,z_\varepsilon) \nabla \theta_\varepsilon \cdot \nabla \zeta \de \f x \de t  =\int_0^T \int_\Omega  \zeta \de \xi_\varepsilon + \frac{\varepsilon}{\theta_\varepsilon ^2}\zeta \de \f x\de t \,\label{energynotdis}
\end{align}
for all $ \zeta \in \C_c^\infty(\Omega \times (0,T))$. 
We additionally find 
\begin{align*}
\int_0^T \int_\Omega  \zeta \de \xi _\varepsilon \geq \int_0^T \int_\Omega \sigma(\theta_\varepsilon) | \t \f A_\varepsilon |^2\zeta   \de \f x \de t 
\end{align*}
for all  $ \zeta \in \C_c^\infty(\Omega \times (0,T))$ with $ \zeta \geq 0$. From~\eqref{energynotdis}, we observe that~\eqref{eesttime} also holds independently of $\varepsilon$.

The chain rule implies
\begin{align*}
\n e ( \theta , z) = \n ( \psi( \theta , z) - \theta  \psi_\theta ( \theta ,z) ) = \psi _z ( \theta , z) \n z - \theta  \psi_{\theta ,\theta} ( \theta ,z) \n \theta -\theta \psi_{\theta,z} ( \theta ,z) \n z   \,
\end{align*}
such that we can estimate due to the properties of the free energy~$\psi$ (see~\eqref{heatcapacity})
\begin{align*}
\|\n e (  \theta ,z) \| _{ L^q(L^q)} \leq c (\|\nabla z \|_{ L^2( L^2)} + \| \nabla \theta\|_{ L^q ( L^q)} ) \leq c   \qquad \text{for }q \in [1,4/3)\,.
\end{align*}

We find the additional weak convergences  and due to the generalized Lions--Aubin lemma (see~\cite[Cor.~7.9]{roubicek} or~\cite[Thm.~3.22]{BV}) that
\begin{align*}
e_\varepsilon & \rightharpoonup e \quad \text{in } L^q(0,T; W^{1,q} )\cap L^p(0,T;L^p)  \text { for } q\in[1,4/3) \text{ and } p\in [1,5/3)\,,\\
\t e_\varepsilon &\stackrel{*}{\rightharpoonup} \t e \quad\text{in } \mathcal{M}([0,T];W^{-1,q})\,,\\
e_\varepsilon & \ra e \quad \text{in } L^p ( 0,T ; L^{p}(\Omega))  \text{ for } p \in [1,5/3)  \,,.
\end{align*}
The same arguments as the ones after formula~\eqref{strongedis} show  the strong convergence of $\{\theta_\varepsilon\}$, i.e.,
 such that 
\begin{align}
\theta_{\varepsilon} =\hat e  (e_ \varepsilon , z_\varepsilon)&\ra \hat e (e,z) = \theta \quad \text{in }L^p(0,T; L^{p}(\Omega)) \quad \text{for }p\in [1,5/3)\,.\label{strongtemp}
\end{align}
\begin{remark}
Nearly all terms in the energy inequality~\eqref{energydis} converge strongly despite the last term on the left-hand side of~\eqref{energydis} incorporating the time derivative of $\f A$. This term is an additional dissipative term, stemming from the fact that (for simplicity), we do not consider the energy balance outside of the work piece. Considering only the system in $\Omega$, we would even be able to show the energy equality under the additional assumption of Remark~\eqref{rem:add}. 
\end{remark}


\section{Weak-strong uniqueness\label{sec:weakstrong}}
This section is devoted to the proof of Theorem~\ref{thm:weakstrong}.
\subsection{Relative energy and associated estimates\label{sec:relen}}
To abbreviate, we introduce the vectors $\f u$ and $\tu$ by $ \f u = ( \theta , z ,\curl \f A^T)^T$ and $\tu = ( \tet, \tz, \curl \tA^T)^T$. 
The relative energy is defined by 
\begin{align}
\begin{split}
 \mathcal{E}(\f u |\tu)  :={}& \int_\Omega  \psi( \theta,z  ) - \theta \psi_\theta (\theta,z ) \de \f x + \int_D\frac{1}{2\mu(z)} | \curl \f A|^2 \de \f x 
\\& 
 - \int_\Omega  \psi(\tet,\tz ) - \tet \psi_\theta (\tet,\tz )\de \f x +\int_D \frac{1}{2\mu(\tz)} | \curl \tA|^2 \de \f x \\
& - \int_\Omega \psi_z ( \tet,\tz) (z-\tz) \de \f x  + \int _ \Omega \tet ( \psi_\theta ( \theta ,z  ) - \psi_\theta (\tet, \tz ) )\de \f x 
\\ & -\int_D\frac{1}{{\mu(\tz)}} \curl \tA  \cdot \left (  \curl \f A -  \curl \tA    \right ) - \frac{\mu'(\tz) }{2\mu(\tz)^2} | \curl \tA |^2 (z-\tz)  \de \f x 
\,.
\end{split}\label{relativeen}
\end{align}
and the relative dissipation by
\begin{align*}
\mathcal{W}
( \f u ,\tu)
:= {}&
\int_\Omega 
\frac{\tet}2 \kappa( \theta ,z) |\nabla \log \theta - \nabla \log \tet|^2+ \frac{\sigma(\theta ) }{2} \left |   \sqrt{\frac{\tet}{\theta} }  \t \f A  -\sqrt{\frac{\theta}{\tet} }  \t \tA   \right |^2
\de \f x
\\
&
+ \int_\Omega  \frac{ \tau(\theta)}{2}\left (\sqrt{\frac{\tet}{\theta} }  \t z -\sqrt{\frac{\theta}{\tet} }  \t \tz\right )^2 
\de \f x + \int_{D/\Omega} \sigma _{\out} | \t \f A - \t \tA|^2 \de \f x \,. 
\end{align*}

\begin{proposition}\label{prop:weakstrong}
Let $( \theta ,\f A ,z )$ be a weak entropy solution (see Definition~\ref{def:weak}) and let $ (\tet,\tA, \tz)$ be a strong solution.
Then the relative energy inequality 
\begin{multline}
\mathcal{E}(\theta , \f A ,z | \tet,\tA, \tz) (t) + \int_0^t  \mathcal{W}(\theta , \f A ,z | \tet,\tA, \tz)  \exp\left ({\int_s^t \mathcal{K}(\tau) \de \tau}\right ) \de s 
\\
\leq \mathcal{E}(\theta , \f A ,z | \tet,\tA, \tz) (0) \exp \left ( {\int_0^t \mathcal{K}(s)\de s } \right )
\,,\label{relenergyin}
\end{multline}
holds for a.e.~$t\in(0,T)$, where $ \mathcal{K}$ is given by
\begin{align*}
\mathcal{K}(t) ={}& c \left ( \| \t \log \tet(t)\|_{L^\infty}+ \| \t \tz(t) \|_{L^\infty} + \| \curl \t \tA(t) \|_{L^\infty} + \| \nabla \log \tet(t)\|_{L^\infty }^2 \right ) \\ & +c \left (  \| \di ( \kappa(\tet(t) ,\tz(t)) \nabla \log \tet(t) ) \|_{L^\infty} + \| \t \log \mu(z(t)) \|_{L^\infty}  + \| \curl\tA(t) \|_{L^\infty} ^2  \right )\,
\end{align*}
and $c$ depends on different norms of the regular solutions, i.e., $ \| \curl \tA \|_{L^\infty(L^\infty)}$, $ \| \tet \|_{L^\infty(L^\infty)} $, and~$ \| \tet^{-1}\|_{L^\infty(L^\infty)}$, as well as the regularity of the associated material functions (cf.~Assumptions~\ref{Asume:1}). Note that $z$, $\t z$, as well as $\tz$, $\t \tz$ are bounded in the $L^\infty(0,T;L^\infty(\Omega))$-norm due to the regularity of the free energy~$\psi$ (see~\eqref{weakz}).  
\end{proposition}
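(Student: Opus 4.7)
The plan is to establish~\eqref{relenergyin} by evaluating the relative energy at times $0$ and $t$, identifying the resulting expression as the dissipation $\mathcal{W}$ plus a remainder of the form $\mathcal{K}(t)\mathcal{E}(\f u|\tu)$, and then applying Gronwall's lemma. Before doing this, I would recognise that the functional $\mathcal{E}(\f u|\tu)$ defined in~\eqref{relativeen} is a Taylor remainder of the total energy
\[
\f u \mapsto \int_\Omega \left(\psi(\theta,z) - \theta\psi_\theta(\theta,z)\right)\de\f x + \int_D \frac{1}{2\mu(z)}|\curl \f A|^2 \de\f x
\]
linearised at $\tu$ in the variables $\psi_\theta$, $z$ and $\curl \f A$. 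A direct term-by-term computation, invoking the concavity of $\psi$ in $\theta$ from~\eqref{heatcapacity}, the convexity in $z$ from~\eqref{phi:concave}, and the assumption~\eqref{condmu} on $\mu^2$, shows that $\mathcal{E}(\f u|\tu) \geq 0$ and that it controls the quantity $\int_\Omega |\theta-\tet|^2/\max(\theta,\tet) + |z-\tz|^2\de\f x + \int_D |\curl\f A-\curl\tA|^2\de\f x$ pointwise in time.

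Second, I would differentiate each of the four lines of~\eqref{relativeen} in time using the available formulations. For the total energies, apply the energy inequality~\eqref{energyequality} to $\f u$ and the corresponding energy equality to $\tu$, which is available by Definition~\ref{def:regular}. For the cross term $\int_\Omega \psi_z(\tet,\tz)(z-\tz)\de\f x$, expand the derivative and substitute the phase equation~\eqref{weakz} for both $z$ and $\tz$. For the cross term $\int_\Omega \tet(\psi_\theta(\theta,z)-\psi_\theta(\tet,\tz))\de\f x$ I would use the entropy inequality~\eqref{entropy} with the admissible test function $\vartheta=\tet$, which is allowed by the regularity~\eqref{regregular}, on the weak side, and the pointwise entropy identity for the regular solution on the other. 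For the magnetic cross terms I would use the weak Maxwell formulation~\eqref{weakA} tested by a suitable combination involving $(1/\mu(\tz))\curl\tA$ and $\t\f A-\t\tA$, complemented by the strong Maxwell equation tested against $\f A-\tA$. After collecting contributions I expect the quadratic pieces to reorganise by square completion into exactly $\mathcal{W}(\f u|\tu)$; for instance, the entropy production term $\sigma(\theta)|\t\f A|^2/\theta$ from~\eqref{entropy}, weighted by $\tet$, combines with the Joule term $\sigma(\tet)|\t\tA|^2$ coming from the regular solution to produce the relative square $\frac{\sigma(\theta)}{2}|\sqrt{\tet/\theta}\,\t\f A-\sqrt{\theta/\tet}\,\t\tA|^2$, and analogously for the phase and heat-conduction dissipations.

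Third, the remaining non-dissipative contributions must be bounded by $\mathcal{K}(t)\mathcal{E}(\f u|\tu)$. I expect these to consist of products of regular-solution quantities such as $\t\log\tet$, $\t\tz$, $\curl\t\tA$, $\nabla\log\tet$, $\di(\kappa(\tet,\tz)\nabla\log\tet)$ and $\curl\tA$ against the differences $(\theta-\tet)$, $(z-\tz)$ or $(\curl\f A-\curl\tA)$, so each estimate reduces to a Cauchy--Schwarz/Young split followed by the coercivity of $\mathcal{E}$ established in the first step; Assumption~\ref{Assume:2} is precisely what allows the nonlinear coefficients in $\sigma$, $\tau$, $\kappa$ and $\psi$ to be handled by the corresponding differences. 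Gronwall's lemma then yields~\eqref{relenergyin}. I expect the main technical obstacle to be the magnetic cross term: the contribution $\int_\Omega \frac{\mu'(z)\t z}{2\mu^2(z)}|\curl\f A|^2\de\f x$ appearing in the energy inequality has no definite sign, and absorbing it into $\mathcal{W}$ together with the linearising term $\frac{\mu'(\tz)}{2\mu^2(\tz)}|\curl\tA|^2(z-\tz)$ requires exploiting precisely the concavity bound $(\mu^2)''\leq 0$ from~\eqref{condmu}. A second delicate point is justifying $\vartheta=\tet$ as a test function in the entropy inequality~\eqref{entropy} and performing the integration by parts responsible for producing the term $\di(\kappa(\tet,\tz)\nabla\log\tet)$ in $\mathcal{K}$; both steps rely crucially on the $\C^1$-in-space and $W^{1,1}$-in-time regularity~\eqref{regregular} of the regular solution.
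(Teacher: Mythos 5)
Your outline follows the paper's proof essentially step for step: positivity and coercivity of $\mathcal{E}$ as a Taylor/Bregman remainder (Proposition~\ref{prop:pos}), time-differentiation of each line of~\eqref{relativeen} via the energy inequality for the weak solution and the energy equality for the regular one, the entropy inequality with $\vartheta=\tet$, the phase and Maxwell equations, square completion into $\mathcal{W}$, and Gronwall. One caveat: your claim that the remaining terms reduce to ``a Cauchy--Schwarz/Young split followed by the coercivity of $\mathcal{E}$'' hides the only genuinely delicate point. Since $\theta$ is merely in $L^\infty(0,T;L^1(\Omega))$, the relative energy does not control $(\theta-\tet)^2$ but only the Bregman-type distance $\theta-\tet-\tet(\log\theta-\log\tet)$, which grows linearly in $\theta$ at infinity; a naive Young split of products such as $\t\tz\,\bigl(\psi_{z\theta}(\tet,z)-\psi_{z\theta}(\tet,\tz)\bigr)(\theta-\tet)$ or of $(\sigma(\theta)-\sigma(\tet))^2\tfrac{\theta}{\tet}|\t\tA|^2$ would therefore produce quadratic terms in $\theta$ that are not controlled. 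The paper circumvents this with two tailored devices: a Young inequality for the Legendre--Fenchel pair of $g(x)=x-\log(1+x)$ (Lemma~\ref{lem:fenchel}), which bounds the product $(f(z)-f(\tz))(\theta-\tet)$ by $(z-\tz)^2$ plus the Bregman distance, and the estimate of Lemma~\ref{lem:est}, which converts $|f(\theta)-f(\tet)|^2\,(1+\log\theta-\log\tet)$ into the Bregman distance using the decay condition~\eqref{boundderivative}. Your plan is otherwise sound, and your identification of the indefinite-sign term $\mu'(z)\t z\,|\curl\f A|^2/(2\mu^2(z))$ and of the role of $(\mu^2)''\le 0$ is accurate, though in the paper the concavity of $\mu^2$ enters through the lower bound on $\mathcal{E}$ in Proposition~\ref{prop:pos}, while the indefinite term itself is absorbed as $\|\t\log\mu(z)\|_{L^\infty}\,\mathcal{E}$, which is why that norm of the \emph{weak} solution appears in $\mathcal{K}$.
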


The proof of this theorem is executed in the remaining part of this section.
First, 
we collect several preliminary results.
\begin{proposition}\label{prop:pos}
Let the assumptions of Theorem~\ref{thm:weakstrong} be fulfilled. Then the relative energy can be estimated from below by
\begin{align*}
\mathcal{E}(\theta , \f A ,z | \tet,\tA, \tz)  \geq c \int_\Omega (\theta -\tet -\tet( \log \theta - \log \tet) ) + | z-\tz|^2 \de \f x + \int_D | \curl \f A - \curl \tA|^2  \de \f x  \geq 0 \,
\end{align*}
for all $ (\theta , z,\f A) $ fulfilling~\eqref{regweak} and $ ( \tet,\tz,\tA)$ fulfilling~\eqref{regregular}. 
\end{proposition}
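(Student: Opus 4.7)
My plan is to decompose $\mathcal{E} = \mathcal{E}_T + \mathcal{E}_M$ into a thermal/phase part (all terms involving $\psi$) and a magnetic part (terms involving $\mu$ and $\curl \f A$), and to bound each contribution via Taylor expansions adapted to the convexity/concavity structure in Assumption~\ref{Assume:2}.

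First I would collect the $\psi_\theta$ contributions to rewrite
\begin{equation*}
\mathcal{E}_T = \int_\Omega \bigl[\psi(\theta,z) - \psi(\tet,\tz) + (\tet - \theta)\psi_\theta(\theta,z) - \psi_z(\tet,\tz)(z - \tz)\bigr] \de \f x,
\end{equation*}
then add and subtract $\psi(\tet,z)$ and split $\mathcal{E}_T$ into the two Bregman-type pieces $A_1 = \psi(\theta,z) - \psi(\tet,z) - \psi_\theta(\theta,z)(\theta - \tet)$ and $A_2 = \psi(\tet,z) - \psi(\tet,\tz) - \psi_z(\tet,\tz)(z - \tz)$. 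Integration by parts gives $A_1 = -\int_{\tet}^{\theta} \psi_{\theta\theta}(s,z)(s - \tet)\,\de s$, and the pointwise bound $-s\psi_{\theta\theta}(s,z)\geq c$ from~\eqref{heatcapacity}, together with a short case analysis on whether $\theta$ exceeds $\tet$, yields $A_1 \geq c\bigl(\theta - \tet - \tet(\log\theta - \log\tet)\bigr)$. For $A_2$, Taylor's theorem with $\psi_{zz}\geq c>0$ from~\eqref{phi:concave} gives $A_2 \geq \tfrac{c}{2}(z - \tz)^2$.

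For the magnetic part, writing $\xi = \curl \f A - \curl \tA$ and expanding $|\curl \f A|^2 = |\curl \tA|^2 + 2\curl \tA\cdot\xi + |\xi|^2$, I would regroup the terms into
\begin{equation*}
\mathcal{E}_M = \int_D \tfrac{1}{2\mu(z)}|\xi|^2\de \f x + \int_D \bigl(\tfrac{1}{\mu(z)} - \tfrac{1}{\mu(\tz)}\bigr)\curl \tA\cdot\xi \de \f x + \int_\Omega \tfrac{|\curl \tA|^2}{2} R(z,\tz)\de \f x,
\end{equation*}
where $R(z,\tz)$ is the Taylor remainder of $z\mapsto \tfrac{1}{\mu(z)}$ at $\tz$ (noting that $\mu$ is independent of $z$ on $D\setminus\Omega$ so that the last integral is effectively over $\Omega$). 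The crucial point is that the concavity $(\mu^2)''\leq 0$ from~\eqref{condmu} implies $\mu\mu''\leq -(\mu')^2 \leq 2(\mu')^2$, so that $\bigl(\tfrac{1}{\mu}\bigr)'' = \tfrac{2(\mu')^2 - \mu\mu''}{\mu^3}\geq 0$; hence $\tfrac{1}{\mu}$ is convex in $z$ and $R\geq 0$.

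Finally I would control the sign-indefinite cross term by Young's inequality,
\begin{equation*}
\bigl|\bigl(\tfrac{1}{\mu(z)} - \tfrac{1}{\mu(\tz)}\bigr)\curl \tA\cdot\xi\bigr| \leq \tfrac{1}{4\mu(z)}|\xi|^2 + \mu(z)\bigl(\tfrac{1}{\mu(z)} - \tfrac{1}{\mu(\tz)}\bigr)^2|\curl \tA|^2,
\end{equation*}
bounding the second summand by $C(z - \tz)^2$ via the Lipschitz estimate on $\tfrac{1}{\mu}$ (which follows from $\mu\in\C^2$ and the lower bound $\mu\geq\un\mu>0$) and the $L^\infty$-bound on $\curl \tA$ from~\eqref{regregular}. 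Half of the $|\xi|^2$-term is then absorbed, yielding a clean lower bound $c_\xi \int_D |\xi|^2\,\de \f x$, and the Young penalty $-C(z - \tz)^2$ is absorbed into a fraction of $A_2$ (this is permitted because $C$ depends only on the a priori bounded norms of the strong solution whereas the constant in $A_2$ is dictated by the fixed $\psi_{zz}$-bound). Collecting the three non-negative contributions gives the stated lower bound, whose non-negativity is then immediate. The main technical obstacle is precisely this sign-indefinite magnetic cross term: its control is made possible by the strengthened concavity $(\mu^2)''\leq 0$ of Assumption~\ref{Assume:2}, which is exactly why this hypothesis is imposed instead of mere concavity of $\mu$, as remarked immediately after that assumption.
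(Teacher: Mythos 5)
Your treatment of the thermal part is correct and is essentially the paper's argument: the splitting into the two Bregman pieces $A_1$ and $A_2$, the identity $A_1=-\int_{\tet}^{\theta}\psi_{\theta\theta}(s,z)(s-\tet)\,\de s=\int_{\tet}^{\theta}\bigl(1-\tfrac{\tet}{s}\bigr)\bigl(-s\psi_{\theta\theta}(s,z)\bigr)\de s$, and the use of $-s\psi_{\theta\theta}\geq c$ and $\psi_{zz}\geq c$ all match the paper. The expansion of the magnetic part into $\tfrac{1}{2\mu(z)}|\xi|^2$, the cross term, and the Taylor remainder $R$ of $1/\mu$ is also a legitimate regrouping, and your observation that $(\mu^2)''\leq 0$ forces $1/\mu$ to be convex (hence $R\geq 0$) is correct.

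The gap is in the final absorption step. After Young's inequality the cross term costs you $C|z-\tz|^2$ with $C\sim \ov{\mu}\,\bigl(\sup|(\tfrac1\mu)'|\bigr)^2\|\curl\tA\|_{L^\infty}^2$, and you propose to pay for this out of $A_2\geq\tfrac{c}{2}|z-\tz|^2$. That requires $C<\tfrac{c}{2}$, but $c$ is the fixed lower bound on $\psi_{zz}$ from~\eqref{phi:concave} while $C$ grows with $\|\curl\tA\|_{L^\infty}^2$; the two constants are completely unrelated, so the parenthetical justification (``$C$ depends only on a priori bounded norms'') does not close the argument — the resulting coefficient $\tfrac{c}{2}-C$ of $|z-\tz|^2$ can be negative, and the claimed lower bound (and even nonnegativity of $\mathcal{E}$) is lost. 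The correct absorber for the cross term is not $A_2$ but the magnetic remainder $R$ itself, which you prove is nonnegative and then discard. This is exactly what the paper's proof arranges: it treats $g(z,\f B)=|\f B|^2/(2\mu(z))$ as a single function, computes its Hessian, and absorbs the off-diagonal entry $-\tfrac{\mu'}{\mu^2}\f B\, y$ into the diagonal entry $\bigl(-\tfrac{\mu''}{2\mu^2}+\tfrac{(\mu')^2}{\mu^3}\bigr)|\f B|^2y^2$ of the \emph{same} Hessian; the leftover coefficient is $-\tfrac{(\mu^2)''}{4\mu^3}|\f B|^2y^2\geq 0$, which is precisely where the full strength of $-(\mu^2)''\geq 0$ enters (your use of it, to get $R\geq 0$ only, needs just $\mu\mu''\leq 2(\mu')^2$ and wastes the hypothesis). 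Taylor's formula for the joint function $g$ along the segment then yields $\mathcal{E}_M\geq\tfrac{1}{3\ov\mu}\int_D|\xi|^2\de\f x$ with no borrowing from the $\psi$-part. Your endpoint decomposition could in principle be repaired by a quantitative comparison of $(\tfrac{1}{\mu(z)}-\tfrac{1}{\mu(\tz)})^2$ against $R(z,\tz)$ along the segment, but as written the step fails.
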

\begin{remark}
From $  \curl \f A = \curl \tA$ follows $\f A = \tA$ since both are in the space $\f H_{\cur}$. 
\end{remark}
\begin{proof}
We recall the assumptions~\eqref{heatcapacity} and~\eqref{condmu}.
Sorting the convex and concave parts yields
\begin{align}
\begin{split}
\mathcal{E}&(\f u  | \tu)  \\
={}& \int_\Omega \psi( \tet , z) -\psi(\tet, \tz )  - \psi_z ( \tet, \tz ) (z-\tz)  \de \f x
- \int_\Omega  \psi( \tet , z) - \psi( \theta , z) - \psi_\theta( \theta ,z ) ( \tet - \theta ) \de \f x
\\
&+ \int_{D}  \frac{1}{2\mu(z)} | \curl \f A|^2 - \frac{1}{2\mu(\tz)} | \curl \tA|^2 -  \frac{1}{{\mu(\tz)}} \curl \tA  \cdot \left (  \curl \f A -  \curl \tA    \right ) + \frac{\mu'(\tz) }{2\mu(\tz)^2} | \curl \tA |^2 (z-\tz) \de \f x \,.
\end{split}\label{relenpos}
\end{align}
The first line of the right-hand side of the forgoing equality is non-negative due to the convexity of $ z \mapsto \psi( \tet, z) $ for every $ \tet \in (0,\infty)$. 
The first line of~\eqref{relenpos} can be transformed using Taylor's formula (cf.~Assumption~\ref{Asume:1}).
\begin{align*}
\psi( \tet , z) -\psi(\tet, \tz )  - \psi_z ( \tet, \tz ) (z-\tz) = \int_0^1 \psi_{zz} ( \tet, \tz + s(z-\tz)) \de s (z-\tz)^2 \geq c (z-\tz)^2 \,.
\end{align*}
For the second line of~\eqref{relenpos}, we observe with a rearrangement
\begin{align*}
- (\psi( \tet , z) - \psi( \theta , z) - \psi_{\theta}( \theta ,z ) ( \tet - \theta )  )
={}& \psi(\theta , z) - \theta \psi_{\theta }( \theta ,z) - \left ( \psi(\tet,z) - \tet \psi_{\theta} (\tet , z) \right ) \\
&+  \tet ( \psi_\theta( \theta, z) - \psi_\theta( \tet ,z) ) 
\end{align*}
that the first two terms on the right-hand side correspond to the energy. Since the derivative of the energy is given by $- \theta \psi_{\theta\theta}$, the right-hand side can be expressed via
\begin{align*}
- (\psi( \tet , z) - \psi( \theta , z) - \psi_{\theta}( \theta ,z ) ( \tet - \theta )  )
 ={}& -\int_{\tet} ^\theta r \psi_{\theta\theta} (r ,z) \de r + \tet \int_{\tet}^\theta \psi_{\theta\theta}(r,z) \de r \\
 ={}&  \int_{\tet}^\theta \left (1- \frac{\tet  }{r } \right )\left (- r \psi_{\theta\theta}(r,z) \right ) \de r \,.
\end{align*} 
With the bound~\eqref{heatcapacity}, we observe
\begin{align*}
- (\psi( \tet , z) - \psi( \theta , z) - \psi_{\theta}( \theta ,z ) ( \tet - \theta )  )
 \geq  c  ( \theta - \tet -\tet( \log \theta -\log \tet ) ) \,.
\end{align*}
Note that the right-hand side of the previous estimate is positive, indeed due to the convexity of the exponential function, we may infer \begin{align*}
 \theta - \tet -\tet( \log \theta -\log \tet )  = \exp ({\log \theta }) - \exp ({\log \tet}) - \exp'(\log\tet) ( \theta - \tet) \geq 0 \,,
\end{align*}
where the quantity $ f (x) - f( y) - f'( y) (x-y)>0$  is positive for every convex function $f\in\C^1(\R)$ and all $ x$, $y\in \R$ with $x\neq y$. 
The last line of~\eqref{relenpos} is positive  on $D/\Omega$ since $\mu$ is constant such that $\partial_z \mu\equiv 0$ and the last line in~\eqref{relenpos} simplifies to
\begin{multline}
\int_{D/\Omega}  \frac{1}{2\mu(z)} | \curl \f A|^2 - \frac{1}{2\mu(\tz)} | \curl \tA|^2 -  \frac{1}{{\mu(\tz)}} \curl \tA  \cdot \left (  \curl \f A -  \curl \tA    \right ) + \frac{\mu'(\tz) }{2\mu(\tz)^2} | \curl \tA |^2 (z-\tz) \de \f x  
\\= \int_{D/\Omega} \frac{1}{2\mu_{\out}} | \curl \f A - \curl \tA|^2 \de \f x \,.\label{DoOm}
\end{multline}
The last line in~\eqref{relenpos} is also positive on~$\Omega$  since the mapping $ (z ,\f B) \mapsto | \f B|^2 / (2\mu(z)) $ is convex. 
Indeed, due to the regularity of $\mu$ (see~\eqref{condmu}), we may compute its second derivative.
Defining $ g( z ,\f B) = | \f B|^2 / (2\mu(z)) $, we find
\begin{align*}
\nabla ^2_{(z,\f B)} g( z ,\f B) = \begin{pmatrix}
\left (- \frac{\mu''(z) }{2\mu^2(z) } + \frac{| \mu'(z)|^2}{\mu^3(z)}\right ) | \f B|^2 & - \frac{\mu'(z)}{\mu^2(z)} \f B ^T  \\   - \frac{\mu'(z)}{\mu(z)} \f B  & \frac{1}{\mu(z)} I 
\end{pmatrix}\,.
\end{align*}
Concerning the positive definiteness of this matrix, we observe 
\begin{align}
\begin{split}
\begin{pmatrix}
y \\ \f D 
\end{pmatrix}
\cdot \left (
 \nabla ^2_{(z,\f B)} g( z ,\f B) \right ) \begin{pmatrix}
y \\ \f D 
\end{pmatrix}
&=  \left (- \frac{\mu''(z) }{2\mu^2(z) } + \frac{| \mu'(z)|^2}{\mu^3(z)}\right ) | \f B|^2 y^2 - 2 \frac{\mu'(z)}{\mu^2(z)} \f B \cdot \f D y + \frac{1}{\mu(z)}  | \f D|^2 \\
&\geq   \left (- \frac{\mu''(z) }{2\mu^2(z) } + \frac{| \mu'(z)|^2}{\mu^3(z)}\right ) | \f B|^2 y^2   -  \frac{3|\mu'(z)|^2}{2\mu^3(z)}| \f B|^2 y^2 -\frac{2}{3\mu(z)}  | \f D|^2+ \frac{1}{\mu(z)}  | \f D|^2 \\
&=   \left (- \frac{\mu''(z) }{2\mu^2(z) } - \frac{| \mu'(z)|^2}{2\mu^3(z)}\right ) | \f B|^2 y^2  + \frac{1}{3\mu(z)}  | \f D|^2 \\
&=  - \frac{(\mu^2(z))'' }{4\mu^3(z) }  | \f B|^2 y^2  + \frac{1}{3\mu(z)}  | \f D|^2 \geq \frac{1}{3\ov \mu } | \f D|^2\,,
\end{split}\label{second}
 \end{align}
where we used Young's inequality and the assumption $ - ( \mu^2(z))'' \geq 0 $ for all $z \in \R$.  
The  last line in~\eqref{relenpos} can be interpreted with Taylor's formula as
\begin{multline*}
\int_\Omega g( z ,\curl \f A ) - g( \tz ,\curl \tA ) - \nabla _{ (z,\f B) } g(\tz ,\curl \tA) \cdot  \begin{pmatrix}z-\tz\\
\curl \f A - \curl \tA  
\end{pmatrix} \de \f x = \\ \int_\Omega
\begin{pmatrix}
z-\tz\\ \curl \f A - \curl \tA 
\end{pmatrix}
\cdot  \int_0^1 \left (
 \nabla ^2_{(z,\f B)} g( \tz+ s(z-\tz) ,\curl \tA + s( \curl \f A - \curl \tA)) \right ) \de s  \begin{pmatrix}
z-\tz\\ \curl \f A - \curl \tA 
\end{pmatrix}  \,.
\end{multline*}
The estimate~\eqref{second} implies the last bound, i.e.,
 \begin{align*}
  \mathcal{E}(\theta , \f A ,z | \tet,\tA, \tz) \geq \frac{1}{3\ov\mu }\int_\Omega  | \curl \f A - \curl \tA|^2 \de \f x \,.
 \end{align*}

\end{proof}
\begin{lemma}\label{lem:est}
Let $f\in \C^1([0,\infty);\R)$ be a function that is bounded and the derivative is also bounded and vanishes sufficiently fast at infinity, i.e.,
\begin{align*}
f ( x) \leq C \,, \quad |f '( x)| (x^3 +  x + 1) \leq c \quad \text{ for all } x \in [0 ,\infty)
\,.   
\end{align*}
Then the estimate
\begin{align}
|f(x ) - f( y)|^2(1+( \log (x) - \log(y) )  \leq \frac{c}{y} (x-y-y(\log x -\log y))\,. \label{logest}
\end{align}
holds for all $x,y\in [0,\infty) $ with $y >0$. 
\end{lemma}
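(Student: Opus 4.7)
The plan is to argue by a case distinction on the sign and size of the factor $1 + (\log x - \log y)$ appearing on the left-hand side. The key observation is that the right-hand side is nothing but the Bregman divergence $B(x,y) := x - y - y(\log x - \log y)$ associated to the strictly convex function $u \mapsto e^u$ with arguments $\log x$ and $\log y$; as such, $B(x,y) \geq 0$ for every $x, y > 0$. Consequently, whenever $1 + \log(x/y) \leq 0$ (equivalently $x \leq y/e$), the left-hand side is non-positive while the right-hand side is non-negative, and the inequality is trivial. It thus suffices to treat the regime $1 + \log(x/y) > 0$, which I split into the near-diagonal case $x/y \in [1/e, e]$ and the far case $x/y > e$.

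In the near-diagonal case, Taylor's formula applied to $\phi(t) := t - 1 - \log t$ around $t = 1$, together with the uniform two-sided bound $e^{-2} \leq \phi''(t) = 1/t^2 \leq e^2$ on $[1/e, e]$, yields $B(x,y)/y = \phi(x/y) \geq c_0 (x/y - 1)^2 = c_0 (x-y)^2/y^2$ for an absolute constant $c_0 > 0$. For the left-hand side I invoke the hypothesis $|f'(s)|(s^3 + s + 1) \leq c$, which in particular gives $|f'(s)| \leq c/(s+1)$, so by the mean value theorem
\begin{equation*}
|f(x) - f(y)|^2 \leq \Bigl(\sup_{s \in [y/e,\, ey]} \frac{c}{s+1}\Bigr)^2 (x-y)^2 \leq \frac{c^2 e^2}{(y+e)^2}\,(x-y)^2.
\end{equation*}
Combining with $|1 + \log(x/y)| \leq 2$ and the elementary inequality $y^2 \leq (y+e)^2$, the near-diagonal case follows with a constant proportional to $c^2/c_0$.

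In the far regime $x > e y$, I discard the decay of $f'$ and use only boundedness of $f$: the estimate $|f(x) - f(y)|^2 \leq 4\|f\|_\infty^2$ reduces the claim to the one-variable inequality $4\|f\|_\infty^2(1 + \log t) \leq c(t - 1 - \log t)$ for $t \geq e$. A direct monotonicity check in $t$ (at $t = e$ both sides are positive, and the derivative of $c(t - 1 - \log t) - 4\|f\|_\infty^2(1 + \log t)$ is nonnegative for $t \geq e$ provided $c$ is chosen large) settles this case.

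The main subtlety in the whole argument is ensuring the bound is uniform in $y \in (0, \infty)$; this is exactly where the denominator $(y+e)$ produced by the decay hypothesis on $f'$ absorbs the $1/y$ present on the right-hand side and prevents a blow-up as $y \to 0^+$. The cubic-at-infinity decay in the hypothesis is stronger than strictly necessary for this lemma (the weaker bound $|f'(s)|(s+1) \leq c$ would already suffice together with boundedness of $f$), but it is inherited naturally from Assumption~\ref{Assume:2} via~\eqref{boundderivative} and makes no part of the argument harder.
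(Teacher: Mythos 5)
Your argument is correct, but it takes a genuinely different route from the paper's. The paper works globally: it first derives the lower bound $x-y-y(\log x-\log y)\geq \tfrac14\sqrt y\,(\sqrt x-\sqrt y)(\log x-\log y)$ by explicitly evaluating half of the integral Taylor remainder of the exponential, and then exploits the decay of $f'$ through the substitution $f=(f\circ g^{-1})\circ g$ with $g=\sqrt{\cdot}$ and $g=\log$ to show that $f$ is Lipschitz in both the square-root and the logarithmic distances; the product $|f(x)-f(y)|^2\leq c\,|\sqrt x-\sqrt y|\,|\log x-\log y|$ then pairs exactly with that lower bound, and the extra factor $1+\log(x/y)$ is absorbed via $|f(x)-f(y)|\leq 2C$. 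Your case analysis in $t=x/y$ (trivial for $t\leq e^{-1}$, Taylor plus the mean value theorem near the diagonal, boundedness of $f$ plus a scalar calculus inequality for $t\geq e$) avoids the two-metric trick entirely and uses only $|f'(s)|(1+s)\leq c$ together with $\|f\|_\infty\leq C$, which substantiates your closing remark that the cubic decay is stronger than needed (the paper's proof likewise only uses $|f'(s)|(s+\sqrt s+1)\leq c$). Two small points: the uniformity issue you flag is really located at $y\to\infty$ rather than $y\to 0^+$ — for small $y$ the weight $1/y$ makes the right-hand side large, so nothing can fail there, whereas for large $y$ the decay of $f'$ must compensate the smallness of $(x-y)^2/y^2$; and, incidentally, your chain of estimates delivers the inequality with the stated weight $c/y$, while the paper's own computation actually produces $c/\sqrt y$, a discrepancy that is immaterial in the application since there $y=\tet$ is bounded above and away from zero.
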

\begin{remark}
The material functions  $\tau$, $\sigma$, and $ \kappa(\cdot , z)$ comply to the assumptions of Lemma~\ref{lem:est} (compare to Assumption~\ref{Assume:2}).
\end{remark}
\begin{proof}
Using the properties of the exponential function, we may rewrite the right-hand side of~\eqref{logest} 
\begin{align*}
x-y-y(\log x -\log y) = \exp({\log x} )- \exp({\log y}) - \exp( { \log y }) ( \log x -\log y) \,.
\end{align*}
The integral representation of the Taylor expansion up to first order implies
\begin{align}
\begin{split}
x-y-y(\log x -\log y)  ={}&\frac{1}{2} \int_0^1 (1-s) \exp({\log y + s( \log x-\log y)}) \de s ( \log x - \log y )^2  \\
\geq {} & \frac{1}{4} \int_0^{\frac{1}{2}}  \exp({\log y + s( \log x-\log y)}) \de s ( \log x - \log y )^2 \\
={}& \frac{1}{4}\left[ \exp({\log y + s( \log x-\log y)}) \right]_0^{\frac{1}{2}}  ( \log x - \log y ) \\
={}& \frac{1}{4} \left[ x ^s y^{1-s} \right]_0^{\frac{1}{2}}  ( \log x - \log y ) \\
= {}& \frac{1}{4} \sqrt{y}(\sqrt{x}-\sqrt{y})  ( \log x - \log y )  \,.
\end{split}\label{estbelow}
\end{align}
The inequality in the above equality chain follows from the fact that the term under the integral is positive, therefore, half of the integral can be dropped to estimate the term from below. On the lower half, $(1-s)$ can be estimated from below by $1/2$. Afterwards, the integral can be calculated explicitly.

Let now $g\in \C^1 ( \R _+ , \R)$ be  either the exponential or the square root function. 
We observe again with the fundamental theorem of calculus that
\begin{align*}
|f ( x) -f (y) |={}& |f ( g^{-1}(g(x))) - f ( g ^{-1} (g(y))) |\\={}& |\int_0^1 ( f \circ g^{-1})' ( g ( y)+ s ( g (x) - g(y))) \de s ( g (x) - g(y) )| \\
\leq {}&\int_0^1\left | (f'\circ g^{-1} )(g^{-1} )'  ( g ( y)+ s ( g (x) - g(y))) \right |\de s | g (x) - g(y) | \\
\leq {}& c | g (x) - g(y) |\,.
\end{align*}
Note that $ |(g^{-1})'(y) |\leq c (g^{-1}(y))^3  $ for $g(x)= \sqrt{x}$ and $ |(g^{-1})'(y) |\leq c (g^{-1}(y))  $ for $g(x)=\log x $.

The last inequality follows from the properties of $f$ and $g$. Note that especially the properties of $f$ enter such that the constant does not depend on $x$ and $y$. 
Choosing $g$ as the square root and the logarithm, we observe
\begin{align*}
|f (x) - f (y) |\leq c | \sqrt x - \sqrt y | \quad \text{ and} \quad| f ( x)- f (y)| \leq c | \log x - \log y| \,,
\end{align*}
such that we find with~\eqref{estbelow} 
\begin{align*}
|f(x ) - f( y)|^2  \leq \frac{c}{\sqrt{y}} (x-y-y(\log x -\log y))\,. 
\end{align*}
The second part of the estimate~\eqref{logest} can be observed by 
\begin{align*}
|f (x) - f (y) |\leq c | \sqrt x - \sqrt y | \quad \text{ and} \quad |f ( x)- f (y)| \leq  2 C  \,,
\end{align*}

\end{proof}
\begin{lemma}\label{lem:fenchel}
Let $ f \in \C^1(\R; \R)$ with $ 0 < c_f \leq f (x) \leq C_f$ and $ |f'(x)|\leq C$ for all $ x \in B_R(0)$ . Then there exists an $C>0$ such that
\begin{align*}
( f( z) -f (\tz) ) ( \theta - \tet) \leq C( ( z -\tz)^2 + \theta -\tet -\tet( \log \theta - \log \tet) ) 
\end{align*}
holds for all $ \theta  \in [0,\infty)$, $\tet \in (0,\infty)$, and $ z $, $\tz \in B_R(0)$ with $R>0$. 
\end{lemma}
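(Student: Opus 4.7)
The right-hand side features the Bregman-type divergence $B(\theta,\tet):=\theta-\tet-\tet(\log\theta-\log\tet)$ associated to the convex function $\theta\mapsto\theta\log\theta-\theta$, which behaves quadratically in $|\theta-\tet|$ near the diagonal $\theta=\tet$ and linearly in $\theta$ as $\theta\to\infty$. This asymmetry naturally couples with the two available estimates on $f$ on $B_R(0)$, namely the Lipschitz bound $|f(z)-f(\tz)|\le C|z-\tz|$ (useful when $|\theta-\tet|$ is small, so the left-hand side already carries a factor of $|z-\tz|$) and the uniform bound $|f(z)-f(\tz)|\le 2C_f$ (useful when $|\theta-\tet|$ is large, so $B(\theta,\tet)$ dominates linearly).

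The first step is therefore to split the pointwise inequality into the two regimes $\theta\le 2\tet$ and $\theta>2\tet$. In the near regime $\theta\le 2\tet$, one computes $B'(\theta)=1-\tet/\theta$, $B''(\theta)=\tet/\theta^2\ge 1/(4\tet)$ on $(0,2\tet]$, so that $B(\tet)=B'(\tet)=0$ together with the integral Taylor remainder yields $(\theta-\tet)^2\le 8\tet\, B(\theta,\tet)$. The Lipschitz bound on $f$ and Young's inequality then give
\begin{equation*}
(f(z)-f(\tz))(\theta-\tet)\le C|z-\tz|\sqrt{8\tet\, B(\theta,\tet)}\le C^2(z-\tz)^2+2\tet\, B(\theta,\tet).
\end{equation*}
In the far regime $\theta>2\tet$, setting $u=\theta/\tet>2$ one computes $B/(\theta-\tet)=1-\log u/(u-1)$; the map $u\mapsto\log u/(u-1)$ is strictly decreasing on $(1,\infty)$ (its derivative has the sign of $1-1/u-\log u$, which is negative for $u>1$), so $B/(\theta-\tet)\ge 1-\log 2>0$. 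Combined with the global bound $|f(z)-f(\tz)|\le 2C_f$ this delivers $(f(z)-f(\tz))(\theta-\tet)\le 2C_f\, B(\theta,\tet)/(1-\log 2)$.

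Summing the two regimes yields the claimed inequality, with $C$ depending on $C_f$, the Lipschitz constant of $f$ on $B_R(0)$, and crucially on an upper bound for $\tet$ through the factor $\tet$ that survives in the near regime. This dependence is harmless in the weak-strong uniqueness setting, since the regular solution satisfies $\tet\in L^\infty(0,T;L^\infty(\Omega))$ by the regularity assumed in Definition~\ref{def:regular}. The only mild subtlety is the Taylor step, where $B''(\theta)=\tet/\theta^2$ blows up as $\theta\to 0$; this is circumvented by using the one-sided lower bound $B''\ge 1/(4\tet)$ valid precisely on the interval $(0,2\tet]$ to which the near regime restricts us.
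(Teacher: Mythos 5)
Your proof is correct, but it takes a genuinely different route from the paper's. The paper proves the estimate in one stroke via convex duality: it writes the product as $4R\tet\int_0^1 f'(\tz+s(z-\tz))\,\de s\cdot\frac{z-\tz}{4R}\cdot(\frac{\theta}{\tet}-1)$ and applies the Fenchel--Young inequality $ab\le g(a)+g^*(b)$ for $g(x)=x-\log(1+x)$, whose conjugate $g^*(p)=-\log(1-p)-p$ satisfies $g^*(p)\le p^2$ for $|p|\le 1/2$ (the rescaling by $4R$ is there precisely to force $|(z-\tz)/(4R)|\le 1/2$), while $g(\theta/\tet-1)$ reproduces $\tet^{-1}(\theta-\tet-\tet(\log\theta-\log\tet))$ exactly. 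Your two-regime decomposition ($\theta\le 2\tet$ with the quadratic lower bound $(\theta-\tet)^2\le 8\tet\,B$ plus the Lipschitz bound on $f$, versus $\theta>2\tet$ with the linear lower bound $B\ge(1-\log 2)(\theta-\tet)$ plus the uniform bound on $f$) is more elementary and makes the mechanism transparent; all the individual steps check out (the monotonicity of $u\mapsto\log u/(u-1)$, the Taylor remainder with $B''\ge 1/(4\tet)$ on $(0,2\tet]$, and the Young step). One point worth noting: you are explicit that your constant carries a factor of $\tet$ and hence requires $\|\tet\|_{L^\infty}<\infty$, whereas the lemma as stated claims a constant uniform over $\tet\in(0,\infty)$; in fact the paper's own proof produces the same $\tet$-dependence (the coefficient $4R\tet C/(16R^2)$ in front of $(z-\tz)^2$), and the uniform version is genuinely false (take $\theta=\tet+1$, $\tet\to\infty$, so that the Bregman term degenerates like $1/(2\tet)$ while the left-hand side stays of order $|z-\tz|$). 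So your remark that the bound is only usable because the regular solution's temperature is bounded is accurate and in fact sharpens the paper's exposition.
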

\begin{proof}
%
Consider the convex, positive function $g : (-1,\infty) \ra (0,\infty)$; $g(x)=  x - \log(x+1) \geq 0$. Its Legendre--Fenchel transform is given by $ g^*(-\infty , 1)\ra (0,\infty)$; $ g^*( p) = - \log ( 1- p) - p \geq 0$. 
Young's inequality for the Legendre--Fenchel transform implies that $ ab\leq g(a) + g^*  (b)$ for $ a \in (-1,\infty) $ and $ b\in ( -\infty , 1) $ (see~\cite{ekeland}). 
The difference of the temperatures can be transformed by this function via
\begin{align}
 g \left ( \frac{\theta }{\tet}-1\right ) = \frac{\theta }{\tet}-1 - \log \frac{\theta }{\tet} = \frac{1}{\tet}\left ( \theta - \tet - \tet ( \log \theta -\log \tet)\right )\,.
\end{align}
Using Taylor's formula, we start to estimate
\begin{align*}
( f( z) - f( \tz) ) ( \theta - \tet) ={}& 4R \tet \int_0^1 | f'(\tz+s(z-\tz)) | \de s \left ( \frac{z-\tz}{4R }\right ) \left ( \frac{\theta}{\tet} - 1\right ) \\
\leq {}&4R \tet C \left ( g ^* \left ( \frac{z-\tz}{4R }\right )+ g \left ( \frac{\theta}{\tet} - 1\right ) \right )\,.
\end{align*}
The Taylor approximation of second order of  $g^*(p)= - \log ( 1- p) - p $ is given by
\begin{align*}
g^*(p)= g^*(0)+(g^*)'(0) p +\frac{1}{2} \int_0^1(1-s) (g^*)''(sp) \de s p^2 = \frac{1}{2} \int_0^1 \frac{1-s}{(1-sp)^2} \de s p^2 \leq  p^2 \,,
\end{align*}
where the first inequality is Taylor's formula of order two, the second equality follows from calculating the derivatives and the inequality follows from $p\leq 1/2$.

\end{proof}

\subsection{Relative energy inequality\label{sec:relin}}
In the following, we execute the proof of Proposition~\ref{prop:weakstrong} and therewith, Theorem~\ref{thm:weakstrong}. 
The energy inequality~\eqref{energyequality} (see Definition~\ref{def:weak}) grants  
\begin{align}
\left . \left (\int_\Omega \psi(\theta(t), z(t) ) - \theta(t) \psi_\theta (\theta(t), z(t) )\de \f x  + \int_{D} \frac{1}{2\mu(z)}| \curl \f A|^2\de \f x  \right ) \right |_0^t\quad\nonumber
  \\+ \int_0^t\left ( \int_\Omega  \frac{\mu'(z)\t z }{2\mu^2(z)} | \curl \f A |^2 \de \f x + \int_{D/ \Omega} \sigma_{\out} | \t \f A |^2 \de \f x \right )\de s   \leq{}&\int_0^t  \int_{\Sigma} \f J_s \cdot \t \f A \de \f x \de s \label{ws:enin}
 \intertext{and the  energy equality for the regular solution (see Definition~\ref{def:regular}) }
\left .\left (  \int_\Omega \psi(\tet(t), \tz(t) ) - \tet(t) \psi_\theta (\tet(t), \tz(t) )\de \f x  +\int_{D} \frac{1}{2\mu(\tz)}| \curl \tA|^2  \de \f x\right )\right |_0^t \quad  \nonumber\\
+ \int_0^t \left (\int_\Omega  \frac{\mu'(\tz)\t \tz }{2\mu^2(\tz)} | \curl \tA |^2 \de \f x + \int_{D/\Omega} \sigma_{\out} | \t \tA|^2 \de \f x \right ) \de s  ={}&   \int_0^t \int_{\Sigma} \f J_s \cdot \t \tA \de \f x \de s \label{ws:eneq}
\,.
\end{align}
The entropy inequality~\eqref{entropy} with $\vartheta=\tet$, implies that
\begin{align}
\left  .\int_\Omega  \psi_\theta(\theta , z )  \tet  \de x \right |_0^t +{}& \int_0^t \int_\Omega \tet \left ( k(\theta ,z) {|\nabla\log  \theta |^2} + \sigma (\theta ) \frac{|\t \f A |^2}{\theta }+ \tau(\theta ) \frac{|\t z|^2}{\theta } \right ) \de x \de s   \\
- {}& \int_0^t \int_\Omega \kappa(\theta ,z) {\nabla \log \theta }  \cdot \nabla \tet \de x \de s  \leq \int_0^t \int_\Omega \psi_\theta (\theta , z  ) \t \tet \de x \de s  \,.
\end{align} 
In the following, we use equation~\eqref{temp} for the regular solution several times.
We find similar to the entropy production rate by testing equation~\eqref{temp} with $-(\theta -\tet) / \tet$ that
\begin{multline}
 \int_0^t \int_\Omega (\theta -\tet) \left ( k(\tet ,\tz) {|\nabla \log \tet |^2} + \sigma (\tet ) \frac{|\t \tA |^2}{\tet}+ \tau(\tet ) \frac{|\t \tz|^2}{\tet } \right ) \de x \de s   \\
+  \int_0^t \int_\Omega \di( \kappa(\tet ,\tz) {\nabla \log \tet })  (\theta -\tet) \de x \de s  =  - \int_0^t \int_\Omega \t  \psi_\theta (\tet,\tz) (\theta- \tet) \de x \de s 
 \,.
\label{entropystrong}
\end{multline}
Note that the entropy production rate holds as an equality for the regular solution. 
Integrating equation~\eqref{temp} or testing the entropy production rate~\eqref{entropy} for the regular solution by $\tet$, we may infer
\begin{align*}
-\int_\Omega \tet \psi_\theta (\tet ,\tz) \de \f x \Big |_0^t -\int_0^t \int_\Omega \sigma(\tet ) | \t \tA   |^2 + \tau(\tet) |\t \tz|^2 \de \f x \de s   = - \int_0^t\int_\Omega  \t \tet\psi_\theta(\tet,\tz ) \de \f x \de s  
\,.
\end{align*}
Combining the previous three equations implies
\begin{multline}
-\left  .\int_\Omega\tet \psi_\theta(\tet,\tz)   \de x \right |_0^t + \int_0^t \int_\Omega (\theta -\tet)  k(\tet ,\tz) {|\nabla \log \tet |^2}+ \di (\kappa(\tet ,\tz) {\nabla\log \tet })  (\theta -\tet)    \de x \de s   \\
+ \int_0^t \int_\Omega (\theta -2\tet)  \left (\sigma (\tet ) \frac{|\t \tA |^2}{\tet}+ \tau(\tet ) \frac{|\t \tz|^2}{\tet }\right )  \de x \de s\\  = -  \int_0^t \int_\Omega \psi_\theta (\tet,\tz)  \t \tet+( \theta - \tet) \partial_t \psi_\theta ( \tet,\tz)   \de x \de s 
 \,.
\end{multline}
We turn to the two weak formulations for the Maxwell equation and the phase variable.
The fundamental theorem of calculus provides in the domain $\Omega$ that
\begin{align*}
-&\left .\int_\Omega\frac{1}{{\mu(\tz)}} \curl \tA  \cdot \left (  \curl \f A -  \curl \tA    \right ) - \frac{\mu'(\tz) }{2\mu(\tz)^2} | \curl \tA |^2 (z-\tz)  \de \f x \right |_0^t \\
&=- \int_0^t \int_\Omega\curl \left (\frac{1}{{\mu(\tz)}} \curl \tA \right ) \cdot \left (  \t  \f A -  \t  \tA    \right ) - \frac{\mu'(\tz) }{2\mu(\tz)^2} | \curl \tA |^2\t (z-\tz)  \de \f x \de s 
\\
&\quad{}- \int_0^t \int_\Omega  \t \left ( 
 \frac{1}{\mu(\tz)} \curl  \tA  \right )
  \cdot \left (  \curl  \f A -  \curl  \tA    \right ) -\t \left ( \frac{\mu'(\tz) }{2\mu(\tz)^2} | \curl \tA |^2\right ) (z-\tz)  \de \f x \de s \,,
\end{align*}
where the formula first only holds for more regular functions, but may be extended via standard density arguments. 
A rearrangement
  implies
\begin{align}
\begin{split}
    -&\left .\int_\Omega\frac{1}{{\mu(\tz)}} \curl \tA  \cdot \left (  \curl \f A -  \curl \tA    \right ) - \frac{\mu'(\tz) }{2\mu(\tz)^2} | \curl \tA |^2 (z-\tz)  \de \f x \right |_0^t \\
&=- \int_0^t \int_\Omega\curl \left ( \frac{1}{{\mu(\tz)}} \curl \tA \right ) \cdot \left (  \t  \f A -  \t  \tA    \right ) - \frac{\mu'(\tz) }{2\mu(\tz)^2} | \curl \tA |^2\t (z-\tz)  \de \f x \de s 
\\
&\quad{}+ \int_0^t \int_\Omega   \frac{\mu'(\tz)\t \tz }{{\mu^2(\tz)}} \curl \tA
  \cdot \left (  \curl  \f A -  \curl  \tA    \right ) +\t \left ( \frac{\mu'(\tz) }{2\mu(\tz)^2} \right )| \curl \tA |^2(z-\tz)  \de \f x \de s 
\\
&\quad{}- \int_0^t \int_\Omega \curl \t \tA  
  \cdot \left (  \frac{1}{\mu(z)}\curl  \f A -  \frac{1}{\mu(\tz)} \curl  \tA    \right ) - \left ( \frac{1}{\mu(z)}-  \frac{1}{\mu(\tz)}\right )  \curl \t \tA  
  \cdot \left ( \curl  \f A - \curl  \tA    \right )  \de \f x \de s 
  \\
&\quad{}+ \int_0^t \int_\Omega\frac{1}{2}\t | \curl \tA|^2 \left (\frac{1}{\mu(z)}- \frac{1}{\mu(\tz)} + \frac{\mu'(\tz)}{\mu^2(\tz)} ( z-\tz) \right )  \de \f x \de s 
\,.\end{split}\label{number}
\end{align}

In the following, we would like to test~\eqref{magnetic} for the strong solution by $\t \f A -\t \tA$. But on the domain $D/(\Omega \cap \Sigma)$ the function $\t \f A$ may not exists in $L^2$-sense, but only in the weak sense. Integrating by parts in time on the domain $D/\Omega$, we find the equality
\begin{multline}
\int_0^t \int_{\Omega} \sigma(\tet) \t \tA \cdot ( \t \f A - \t \tA ) + \curl \left ( \frac{1}{\mu(\tz) } \curl \tA \right )   \cdot ( \t \f A - \t \tA ) \de \f x \de s+\int_0^t \int_{D/\Omega} \sigma_{\out} \t \tA ( \t \f A - \t \tA) \de \f x \de s  \quad \\
+ \left  . \int_{D/\Omega} \frac{1}{\mu_{\out}} \curl \tA \cdot \left ( \curl \f A - \curl \tA \right ) \de \f x \right |_0^t \quad \\- \int_0^t \int_{D/\Omega} \t \left ( \frac{1}{\mu_{\out}} \curl \tA \right ) \cdot \left ( \curl \f A- \curl \tA \right ) \de \f x \de s = \int_0^t \int_\Sigma \f J_s \cdot \left ( \t \f A - \t \tA \right ) \de \f x \de s \,
\label{A1}
\end{multline}
first for all suitable test functions, which is then chosen to be $\f A - \tA$ in the above equality. 
Similar, we subtract from equation~\eqref{weakA} with $\varphi = \t \tA  $ equation~\eqref{magnetic} for the regular solution tested with $\t \tA$
\begin{align}
\begin{split}
0={}& \int_0^t  \int_D  ( \sigma(\theta ) \t \f A -\sigma(\tet) \t \tA) \cdot \t \tA  +\left  (\frac{1}{\mu(z) } \curl \f A - \frac{1}{\mu(\tz)} \curl \f A \right ) \cdot \curl \t \tA  \de \f x \de s 
\\
={}&
 \int_0^t  \int_\Omega  ( \sigma(\theta ) \t \f A -\sigma(\tet) \t \tA) \cdot \t \tA  +\left  (\frac{1}{\mu(z) } \curl \f A - \frac{1}{\mu(\tz)} \curl \f A \right ) \cdot \curl \t \tA  \de \f x \de s 
 \\
&+ 
 \int_0^t  \int_{D/\Omega}   \sigma_{\out}( \t \f A - \t \tA) \cdot \t \tA  +\frac{1}{\mu_{\out} }\left (  \curl \f A - \curl \f A \right ) \cdot \curl \t \tA  \de \f x \de s 
\,.
\end{split} \label{A2}
\end{align}
Notice that there are some terms occuring twice with different signs such that they will vanish, when the results are combined afterwards. The term in the second line of~\eqref{A1} corresponds to the term in the last line of~\eqref{relativeen} on $D/\Omega$. The term on the left-hand side of the last line of~\eqref{A1} corresponds to the second term in the last line of~\eqref{A2}, and the right-hand side of~\eqref{A1} corresponds to the right-hand sides of the energy inequality~\eqref{ws:enin} and the energy equality~\eqref{ws:eneq}. The sum of the second term on the left-hand side of~\eqref{A1} and the first term on the right-hand side of~\eqref{number} vanishes. The second term on the right-hand side of~\eqref{A2} together with the first term in the fourth line of~\eqref{number} vanish. 

With respect to the phase equation, we find with the fundamental theorem of calculus that
\begin{align*}
- \left . \int_\Omega \psi_z (\tu) ( z -\tz) \de \f x \right  |_0^t = {}& - \int_0^t \int_\Omega \partial_t \psi_{z} ( \tu) ( z-\tz ) + \psi_{z} ( \tu) ( \t z-\t  \tz ) \de \f x \de s  \,.
\end{align*}
Equation~\eqref{inner} for the regular functions tested with $ \t z - \t \tz$ and adding equation~\eqref{weakz} tested with $\t \tz$, and subtract equation~\eqref{inner} tested with $\t \tz$,  yields
\begin{align*}
\int_0^T\int_\Omega  \tau( \tet ) \t \tz  ( \t z - \t \tz) + \psi_{ z}(\tu) ( \t z - \t \tz)+ ( \tau(\theta) \t z - \tau (\tet) \t \tz) \t \tz +( \psi_z(\f u )- \psi_z(\tu)) \t \tz  \de \f x \de s  = 
0
\,.
\end{align*}


Inserting everything into the relative energy gives the relative energy inequality
\begin{align}
\left. \mathcal{E}(\f u(t) | \tu(t))\right |_0^t {}&+ \int_0^t\int_\Omega \tet \kappa(\theta ,z) { | \nabla\log  \theta|^2} - \kappa ( \theta , z) {\nabla \log\theta } \cdot \nabla \tet  \de \f x \de s  \notag
\\
&+ \int_0^t\int_\Omega  ( \theta - \tet) \left (\kappa ( \tet , \tz) {| \nabla \log \tet|^2} +\di( \kappa(\tet,\tz) {\nabla \log \tet })\right )  \de \f x \de s  \notag
\\
{}&+\int_0^t \int_\Omega \tet\left ( \sigma(\theta) \frac{|\t \f A |^2 }{\theta } + \tau(\theta) \frac{|\t z|^2 }{\theta } \right ) + ( \theta-2\tet) \left (\sigma (\tet ) \frac{|\t \tA |^2}{\tet}+ \tau(\tet ) \frac{|\t \tz|^2}{\tet }\right ) \de \f x \de s  \notag\\
&- \int_0^t \int_\Omega  \sigma(\tet ) \t \tA  \cdot ( \t \f A - \t \tA)  +( \sigma(\theta ) \t \f A)-\sigma(\tet) \t \tA  ) \cdot \t \tA  \de \f x \de s  \notag\\
&- \int_0^t \int_\Omega   \tau( \tet ) \t \tz  ( \t z - \t \tz)+ ( \tau(\theta) \t z - \tau (\tet) \t \tz) \t \tz \de \f x \de s   \notag
+ \int_0^t \int_{D/\Omega} {\sigma_{\out}} | \t \f A- \t \tA |^2 \de \f x 
\\
\leq{}& \int_0^t \int_\Omega \t \tet \left ( \psi_\theta (\theta ,z  ) -\psi_\theta (\tet,\tz
)  \right ) - \partial _t \psi_\theta(\tet,\tz) ( \theta - \tet)  \de \f x \de s  \notag\\
& +\int_0^t \int_\Omega  \t \tz ( \psi_z(\theta,z) - \psi_z (\tet,\tz) ) - \partial_t \psi_z (\tet,\tz) ( z -\tz)\de \f x \de s  \notag
\\
&+ \int_0^t \int_\Omega     \left ( \frac{1}{\mu(z)}-  \frac{1}{\mu(\tz)}\right )  \curl \t \tA  
  \cdot \left ( \curl  \f A - \curl  \tA    \right )   \de \f x \de s \notag
  \\
&+ \int_0^t \int_\Omega\frac{1}{2}\t | \curl \tA|^2 \left (\frac{1}{\mu(z)}- \frac{1}{\mu(\tz)} + \frac{\mu'(\tz)}{\mu^2(\tz)} ( z-\tz) \right )  \de \f x \de s \notag
\\
&+\int_0^t \int_\Omega  \frac{\mu'(\tz) }{2\mu(\tz)^2} | \curl \tA |^2\t (z-\tz) +  \frac{\mu'(\tz)\t \tz }{2\mu^2(\tz)} | \curl \tA |^2- \frac{\mu'(z)\t z }{2\mu^2(z)} | \curl \f A |^2  \de \f x \de s \notag
\\
&+ \int_0^t \int_\Omega   \frac{\mu'(\tz)\t \tz }{{\mu^2(\tz)}} \curl \tA
  \cdot \left (  \curl  \f A -  \curl  \tA    \right ) +\t \left ( \frac{\mu'(\tz) }{2\mu(\tz)^2} \right )| \curl \tA |^2(z-\tz)  \de \f x \de s 
\,.
\label{relenin}
\end{align}
For the terms in the first and the second line on the right-hand side, 
the chain rule implies
\begin{align*}
 \t \tet \left ( \psi_\theta (\theta ,z  ) -\psi_\theta (\tet,\tz
)  \right ) - \partial _t \psi_\theta(\tet,\tz) ( \theta - \tet) +  \t \tz ( \psi_z(\theta,z) - \psi_z (\tet,\tz) ) - \partial_t \psi_z (\tet,\tz) ( z -\tz) \\
=\t \tet \left ( \psi_\theta (\theta ,z  ) -\psi_\theta (\tet,\tz
) - \psi_{\theta \theta}(\tet,\tz) ( \theta - \tet)- \psi_{z\theta} (\tet,\tz) ( z -\tz)    \right ) \\
+\t \tz \left ( \psi_z (\theta , z  ) -\psi_z (\tet,\tz
) - \psi_{z,\theta}(\tet,\tz) ( \theta - \tet)- \psi_{zz} (\tet,\tz) ( z -\tz)    \right ) \,.
\end{align*}
For the last two lines of~\eqref{relenin}, we find 
\begin{align*}
&\int_0^t \int_\Omega  \frac{\mu'(\tz) }{2\mu(\tz)^2} | \curl \tA |^2\t (z-\tz) +  \frac{\mu'(\tz)\t \tz }{2\mu^2(\tz)} | \curl \tA |^2- \frac{\mu'(z)\t z }{2\mu^2(z)} | \curl \f A |^2  \de \f x \de s 
\\
&+ \int_0^t \int_\Omega   \frac{\mu'(\tz)\t \tz }{{\mu^2(\tz)}} \curl \tA
  \cdot \left (  \curl  \f A -  \curl  \tA    \right ) +\t \left ( \frac{\mu'(\tz) }{2\mu(\tz)^2} \right )| \curl \tA |^2(z-\tz)  \de \f x \de s 
\\
&{}=- \int_0^t \int_\Omega \t \log \mu(z) \left (   \frac{1}{2\mu(z)} | \curl \f A|^2 - \frac{1}{2\mu(\tz)} | \curl \tA|^2   \right ) \de \f x \de s \\
&\quad + \int_0^t \int_\Omega \t \log \mu(z) \left (   \frac{1}{{\mu(\tz)}} \curl \tA  \cdot \left (  \curl \f A -  \curl \tA    \right ) - \frac{\mu'(\tz) }{2\mu(\tz)^2} | \curl \tA |^2 (z-\tz)   \right ) \de \f x \de s \\
&\quad - \int_0^t \int_\Omega \frac{1}{\mu(\tz)} \curl \tA \cdot (\curl \f A- \curl \tA) \left ( \t ( \log \mu(z)- \log \mu(\tz))\right )  \de \f x \de s
\\
&\quad - \int_0^t \int_\Omega  \frac{\mu'(\tz) }{2\mu(\tz)} | \curl \tA|^2 ( z-\tz) \left ( \t ( \log \mu( \tz) - \log \mu(z))\right ) \de \f x \de s
\\
&\quad - \int_0^t \int_\Omega \frac{1}{2\mu(\tz)}| \curl \tA|^2 \t \left ( \log \mu (z) - \log\mu(\tz) - \partial_z \log \mu(\tz) (z-\tz) \right ) \de \f x \de s \,.
\end{align*}
The remaining part of the proof consists of handling the dissipative terms on the left-hand side of~\eqref{relenin} appropriately and estimate the remaining terms appearing on the right-hand side of~\eqref{relenin} in terms of $\mathcal{E}$ and $\mathcal{W}$. 

\subsection{Dissipative terms\label{sec:diss}}
In this subsection, we focus on the different dissipative terms. Concerning the heat dissipation, we observe
with some rearrangements
\begin{align}
\begin{split}
&\tet \kappa(\theta ,z) { | \nabla \log \theta|^2} - \kappa ( \theta , z) {\nabla \log \theta } \cdot \nabla \tet + ( \theta - \tet) \kappa ( \tet , \tz) {| \nabla\log \tet|^2} + \di ( \kappa(\tet,\tz) {\nabla \log\tet })  (  \theta -\tet) \\
&\qquad = \tet \kappa( \theta ,z)  \nabla \log \theta \cdot ( \nabla \log \theta - \nabla \log \tet) + \kappa( \tet , \tz) \left ( \theta - \tet - \tet \log \frac{\theta}{\tet}\right ) | \nabla \log \tet |^2 \\
& \qquad \quad +   \di ( \kappa(\tet,\tz) {\nabla \log\tet }) \left  (  \theta -\tet- \tet \log \frac{\theta}{\tet}\right ) \\
& \qquad \quad
 + \left ( \kappa ( \tet , \tz) | \nabla \log \tet|^2+ \di ( \kappa(\tet,\tz) \nabla \log \tet) \right ) \tet  ( \log \theta - \log \tet) \\
&\qquad = \tet \kappa( \theta ,z) |\nabla \log \theta - \nabla \log \tet|^2 + \tet( \kappa (\theta , z) -\kappa (\tet, \tz))    \nabla \log \tet ( \nabla \log \theta - \nabla \log \tet) \\
&\qquad\quad + \left ( \kappa ( \tet , \tz) | \nabla \log \tet|^2 + \di ( \kappa(\tet,\tz) \nabla \log \tet) \right )  
\left (  \theta -\tet- \tet \log \frac{\theta}{\tet}\right ) \\
& \qquad \quad +  \kappa( \tet , \tz)  \tet  \nabla \log \tet \cdot  ( \nabla \log \theta - \nabla \log \tet) 
\\& \qquad \quad
+ \left ( \kappa ( \tet , \tz) | \nabla \log \tet|^2 + \di ( \kappa(\tet,\tz) \nabla \log \tet) \right ) \tet  ( \log \theta - \log \tet) \,.
\end{split}\label{dissheat}
\end{align}
An integration-by-parts for the last term on the right-hand side of~\eqref{dissheat} implies
\begin{multline*}
\int_\Omega  \di ( \kappa(\tet,\tz) \nabla \log \tet)   \tet  ( \log \theta - \log \tet) \de \f x \\= - \int_\Omega  \kappa ( \tet , \tz) | \nabla \log \tet|^2  \tet  ( \log \theta - \log \tet) +  \kappa( \tet , \tz)  \tet  \nabla \log \tet \cdot  ( \nabla \log \theta - \nabla \log \tet)  \de \f x \,,
\end{multline*}
such that the integral over the last line of~\eqref{dissheat} vanishes. 

Estimating the second term on the right-hand side of the second equality sign in~\eqref{dissheat} by Young's inequality yields
\begin{align*}
&\int_\Omega \tet \kappa(\theta ,z) { | \nabla \log \theta|^2} - \kappa ( \theta , z) {\nabla \log \theta } \cdot \nabla \tet + ( \theta - \tet) \kappa ( \tet , \tz) {| \nabla\log \tet|^2} + \di ( \kappa(\tet,\tz) {\nabla \log\tet })  (  \theta -\tet) \de \f x  \\
&\qquad \geq  \int_\Omega  \frac{\tet}2 \kappa( \theta ,z) |\nabla \log \theta - \nabla \log \tet|^2 - c \tet | \kappa (\theta , z) -\kappa (\tet, \tz))|^2   | \nabla \log \tet|^2 \de \f x  \\
& \qquad \quad +   \int_\Omega   ( \theta - \tet - \tet(\log \theta - \log \tet) )  ( \kappa( \tet , \tz)  | \nabla \log \tet |^2 + \di ( \kappa( \tet , \tz) \nabla \log \tet) )\de \f x  \,.
\end{align*}

From now on, we discuss the dissipative terms due to Joule heating and phase transition. 
The calculations are the same for both contributions, hence, we only present them for the phase transition case
\begin{align*}
&\tau(\theta)\frac{\tet}{\theta}  \t z^2 + \tau(\tet ) \frac{\theta}{\tet} \t \tz^2 - 2 \tau(\tet) \t \tz^2 - \tau( \tet ) \t \tz  ( \t z - \t \tz)- ( \tau(\theta) \t z - \tau (\tet) \t \tz) \t \tz\\
&\qquad = \tau(\theta)\frac{\tet}{\theta}  \t z^2 + \tau(\tet ) \frac{\theta}{\tet} \t \tz^2  - (\tau( \tet ) +  \tau(\theta)) \t z  \t \tz
\\
&\qquad = \tau(\theta)\sqrt{\frac{\tet}{\theta} }  \t z\left (\sqrt{\frac{\tet}{\theta} }  \t z -\sqrt{\frac{\theta}{\tet} }  \t \tz\right ) - \tau(\tet ) \sqrt{\frac{\theta}{\tet} }  \t \tz\left (\sqrt{\frac{\tet}{\theta} }  \t z -\sqrt{\frac{\theta}{\tet} }  \t \tz\right )
\\
&\qquad = \tau(\theta)\left (\sqrt{\frac{\tet}{\theta} }  \t z -\sqrt{\frac{\theta}{\tet} }  \t \tz\right )^2  + ( \tau(\theta)-  \tau(\tet ) ) \sqrt{\frac{\theta}{\tet} }  \t \tz\left (\sqrt{\frac{\tet}{\theta} }  \t z -\sqrt{\frac{\theta}{\tet} }  \t \tz\right )\,.
\end{align*}
Applying Young's inequality, we observe
\begin{align*}
&\tau(\theta)\frac{\tet}{\theta}  \t z^2 + \tau(\tet ) \frac{\theta}{\tet} \t \tz^2 - 2 \tau(\tet) \t \tz^2 - \tau( \tet )\t  \tz  ( \t z - \t \tz)- ( \tau(\theta) \t z - \tau (\tet) \t \tz) \t \tz\\
&\qquad \geq \frac{ \tau(\theta)}{2}\left (\sqrt{\frac{\tet}{\theta} }  \t z -\sqrt{\frac{\theta}{\tet} }  \t \tz\right )^2  -c  {( \tau(\theta)-  \tau(\tet ) )^2} \frac{\theta}{\tet} |  \t \tz|^2\,.
\end{align*}
The second term on the right-hand side has to be bounded by the relative energy. Therefore it remains to adequately estimate the variable $\theta$. 
By some rearrangement, we find
\begin{multline*}
\frac{( \tau(\theta)-  \tau(\tet ) )^2}{2} \frac{\theta}{\tet} |  \t \tz|^2 = \\ \frac{( \tau(\theta)-  \tau(\tet ) )^2}{2\tet } |  \t \tz|^2 ( \theta - \tet - \tet( \log \theta - \log \tet) ) + \frac{( \tau(\theta)-  \tau(\tet ) )^2}{2\tet } |  \t \tz|^2 (\tet + \tet( \log \theta - \log \tet) )\,.
\end{multline*}
Similar calculation for the dissipative terms due to Joule heating let us conclude from~\eqref{relenin}
\begin{align}
\mathcal{E}(\f u(t) | \tu(t)) {}&+ \int_0^t\int_\Omega 
\frac{\tet}2 \kappa( \theta ,z) |\nabla \log \theta - \nabla \log \tet|^2+ \frac{\sigma(\theta ) }{2} \left |   \sqrt{\frac{\tet}{\theta} }  \t \f A-\sqrt{\frac{\theta}{\tet} }  \t \tA    \right |^2
\de \f x \de s  \notag
\\
&+ \int_0^t\int_\Omega  \frac{ \tau(\theta)}{2}\left (\sqrt{\frac{\tet}{\theta} }  \t z -\sqrt{\frac{\theta}{\tet} }  \t \tz\right )^2 
\de \f x \de s  \notag
+ \int_0^t \int_{D/\Omega} {\sigma_{\out}} | \t \f A- \t \tA |^2 \de \f x 
\\
\leq{}&   \mathcal{E}(\f u(0), \tu(0)) + \int_0^t \int_\Omega   \t \tet \left ( \psi_\theta (\theta ,z  ) -\psi_\theta (\tet,\tz
) - \psi_{\theta,\theta}(\tet,\tz) ( \theta - \tet)- \psi_{z,\theta} (\tet,\tz) ( z -\tz)    \right )\de \f x \de s  \notag
\\
&+ \int_0^t \int_\Omega\t \tz \left ( \psi_z (\theta ,z  ) -\psi_z (\tet,\tz
) - \psi_{z,\theta}(\tet,\tz) ( \theta - \tet)- \psi_{z,z} (\tet,\tz) ( z -\tz)    \right )  \de \f x \de s   \notag
\\
&+ \int_0^t \int_\Omega     \left ( \frac{1}{\mu(z)}-  \frac{1}{\mu(\tz)}\right )  \curl \t \tA  
  \cdot \left ( \curl  \f A - \curl  \tA    \right )   \de \f x \de s \notag
  \\
&+ \int_0^t \int_\Omega\frac{1}{2}\t | \curl \tA|^2 \left (\frac{1}{\mu(z)}- \frac{1}{\mu(\tz)} + \frac{\mu'(\tz)}{\mu^2(\tz)} ( z-\tz) \right )  \de \f x \de s \notag
\\&+ \int_0^t\int_\Omega  \frac{\tet}2| \kappa (\theta , z) -\kappa (\tet, \tz))|^2   | \nabla \log \tet|^2 \de \f x  \notag
\\
& + \int_0^t \int_\Omega ( \tet +  \tet(\log \theta - \log \tet) ) \left (\frac{( \tau(\theta)-  \tau(\tet ) )^2}{2\tet } |  \t \tz|^2 + \frac{(\sigma(\theta ) - \sigma(\tet))^2 }{2 \tet} \left | \t \tA \right |^2\right ) \de \f x \de s \notag  
 \\
& - \int_0^t   \int_\Omega   ( \theta - \tet - \tet(\log \theta - \log \tet) )  ( \kappa( \tet , \tz)  | \nabla \log \tet |^2 + \di ( \kappa( \tet , \tz) \nabla \log \tet) )\de \f x \de s  \notag\\
& + \int_0^t \int_\Omega
 ( \theta - \tet - \tet(\log \theta - \log \tet) )
  \left (\frac{( \tau(\theta)-  \tau(\tet ) )^2}{2\tet } |  \t \tz|^2 + \frac{(\sigma(\theta ) - \sigma(\tet))^2 }{2 \tet} \left | \t \tA \right |^2\right ) \de \f x \de s   \notag
\\
 &-  \int_0^t \int_\Omega \t \log \mu(z) \left (   \frac{1}{2\mu(z)} | \curl \f A|^2 - \frac{1}{2\mu(\tz)} | \curl \tA|^2  \right ) \de \f x \de s\notag
  \\ 
 &+  \int_0^t \int_\Omega \t \log \mu(z) \left (    \frac{1}{{\mu(\tz)}} \curl \tA  \cdot \left (  \curl \f A -  \curl \tA    \right ) - \frac{\mu'(z) }{2\mu(z)^2} | \curl \tA |^2 (z-\tz)   \right ) \de \f x \de s \notag
 \\
& - \int_0^t \int_\Omega \frac{1}{\mu(\tz)} \curl \tA \cdot (\curl \f A- \curl \tA) \left ( \t ( \log \mu(z)- \log \mu(\tz))\right )  \de \f x \de s\notag
\\
& - \int_0^t \int_\Omega \frac{\mu'(\tz) }{2\mu(\tz)} | \curl \tA|^2 ( z-\tz) \left ( \t ( \log \mu( \tz) - \log \mu(z))\right ) \de \f x \de s\notag
\\
&- \int_0^t \int_\Omega \frac{1}{2\mu(\tz)}| \curl \tA|^2 \t \left ( \log \mu (z) - \log\mu(\tz) - \partial_z \log \mu(\tz) (z-\tz) \right ) \de \f x \de s \,.
\label{secondrelin}
\end{align}
\subsection{Remaining terms\label{sec:est}}
In the following, we argue that the right-hand side can be bounded by the relative energy line by line. 

Concerning the terms in the first two lines of the right-hand side of~\eqref{secondrelin}, we observe that
\begin{align}
\begin{split}
 \t \tet& \left ( \psi_\theta (\theta ,z  ) -\psi_\theta (\tet,\tz
) - \psi_{\theta,\theta}(\tet,\tz) ( \theta - \tet)- \psi_{z,\theta} (\tet,\tz) ( z -\tz)    \right )\\
+\t &\tz \left ( \psi_z (\theta ,z  ) -\psi_z (\tet,\tz
) - \psi_{z,\theta}(\tet,\tz) ( \theta - \tet)- \psi_{z,z} (\tet,\tz) ( z -\tz)    \right )\\
={}&\tet \left ( \psi_\theta ( \theta , z) - \psi_\theta(\tet, z) - \psi_{\theta\theta}( \tet , z) (\theta -\tet) \right ) \t \log\tet + \tet\left ( \psi_z ( \theta , z) - \psi_z ( \tet ,z) - \psi_{z\theta} ( \theta -\tet) \right ) \frac{\t \tz}{\tet} \\
&+ \left ( \psi_\theta( \tet, z) - \psi_\theta (\tet, \tz) - \psi_{z\theta} ( \tet,\tz) (z-\tz) \right ) \t \tet + \left ( \psi_z( \tet,z) - \psi_z (\tet ,\tz) - \psi_{zz} (\tet ,\tz) ( z -\tz)\right ) \t \tz \\
&+ \t \log \tet\left ( \tet\psi_{\theta\theta}(\tet,z)  - \tet\psi_{\theta\theta}(\tet,\tz)\right )( \theta - \tet)   + \frac{\t z}{\tet} \left ( \tet\psi_{z\theta}( \tet,z) -\tet \psi_{z\theta}(\tet ,\tz) \right ) (\theta - \tet)\,.
\end{split}\label{thirdderi}
\end{align}
The two terms in the first line on the right-hand side can be handled in a similar way, we exemplify the  calculations for the first term.
With the fundamental theorem of calculus, we find 
\begin{align*}
\tet& \left ( \psi_\theta ( \theta , z) - \psi_\theta(\tet, z) - \psi_{\theta\theta}( \tet , z) (\theta -\tet) \right )
\\
&= \tet \int_{\tet}^\theta \psi_{\theta\theta} ( r ,z)- \psi_{\theta\theta}( \tet , z) \de r  =   \int_{\tet}^\theta \frac{\tet}{r} r \psi_{\theta\theta} ( r ,z)- \tet \psi_{\theta\theta}( \tet , z) \de r\\
&=  \int_{\tet}^\theta \frac{\tet}{r} (r \psi_{\theta\theta} ( r ,z)- \tet \psi_{\theta\theta}( \tet , z)) \de r - \tet \psi_{\theta\theta}( \tet , z ) \int_{\tet}^\theta 1 - \frac{\tet}{r}\de r \\
&=  \int_{\tet}^\theta \frac{\tet}{r} \int_{\tet}^r \partial^2_{\theta\theta} e(s,z) \de s  \de r - \tet \psi_{\theta\theta}( \tet , z ) \int_{\tet}^\theta 1 - \frac{\tet}{r}\de r \\
&\leq c \int_{\tet}^\theta 1 - \frac{\tet}{r}\de r = c\left ( \theta - \tet -\tet(\log \theta - \log\tet) \right ) \,,
\end{align*}
where we used the boundedness of the second derivative of the energy. 
The terms in the second line of~\eqref{thirdderi} are only depending on quantities that are known to be bounded, i.e., $\theta$ is not appearing. Therefore, Taylor's formula can be applied. We exemplify the calculations again for the first term:
\begin{align*}
| \psi_\theta( \tet, z) - \psi_\theta (\tet, \tz) - \psi_{z\theta} ( \tet,\tz) (z-\tz) |\leq  c | z -\tz |^2 \,.
\end{align*}
To handle the last line of~\eqref{thirdderi}, we use Lemma~\ref{lem:thetae} and that $z\mapsto \tet\psi_{\theta\theta}( \tet ,z) $ as well as  $z\mapsto \tet\psi_{\theta z}( \tet ,z) $ are bounded regular functions, i.e., $\psi $ is locally trice differentiable to find
\begin{align*}
\left ( \tet\psi_{\theta\theta}(\tet,z)  - \tet\psi_{\theta\theta}(\tet,\tz)\right )( \theta - \tet ) \leq C ( ( z-\tz)^2 + \theta - \tet -\tet(\log \theta - \log \tet) ) \,.
\end{align*}

Due to the smoothness assumptions on $\mu$ (see~\eqref{condmu}), we find with Taylor's formula applied to 
$1/\mu$ that
\begin{align*}
\left |\frac{1}{\mu(z)}- \frac{1}{\mu(\tz)}\right |&\leq{}
\left | \int_0^1 \frac{\mu'(\tz+s(z-\tz))}{\mu^2(\tz+s(z-\tz))} \de s \right  | |z-\tz| \leq 
c | z - \tz| \,,\\
\left | \frac{1}{\mu(z)}-\frac{1}{\mu(\tz)}- \partial_{z} \left (\frac{1}{\mu(\tz)}\right ) (z-\tz) \right | &\leq{}
\left | \int_0^1 \partial_{zz}^2  \left (\frac{1}{\mu(\tz+ s(z-\tz))}\right )  \de s\right | ( z -\tz)^2 \leq 
c|z-\tz|^2\,.
\end{align*}
With Young's inequality, we find the boundedness of the third and forth line on the right-hand side of~\eqref{secondrelin} in terms of the relative energy. 
For the heat conduction, we observe that 
\begin{align*}
| \kappa(\theta ,z) -\kappa (\tet,\tz) |^2 \leq 2 | \kappa (\theta ,z) - \kappa (\tet,z) |^2 + 2 |\kappa(\tet, z) - \kappa (\tet,\tz) |^2\,.
\end{align*} 
The second term on the right-hand side can be estimated by Taylor's formula, i.e., $| \kappa(\tet, z) - \kappa (\tet,\tz) |^2 \leq  \| \partial_z \kappa\|_{\C^1}^2 | z - \tz|^2$,  and the first one by Lemma~\ref{lem:est} such that terms in line five on the right-hand side of~\eqref{secondrelin} can be estimated by $\mathcal{E}$. 
Lemma~\ref{lem:est} also helps to estimate line six on the right-hand side of~\eqref{secondrelin}.
Due to Proposition~\ref{prop:pos} as well as the definition of the relative energy~\eqref{relativeen}, line seven to ten are  bounded by the relative energy. 
For the last three lines, the regularity of $\mu$ implies that Taylor's formula can be applied to $\log \mu $, which gives
 \begin{align*}
\log\mu(z)-\log\mu(\tz) &= \int_0^1 \frac{\mu'(\tz+s(z-\tz))}{\mu(\tz+s(z-\tz))} \de s ( z-\tz)\,,
\\
\log \mu (z) - \log\mu(\tz) - \partial_z \log \mu(\tz) (z-\tz)  &= \int_0^1 \partial^2_{zz} \log(\mu( \tz + s(z-\tz)) \de s (z-\tz)^2\,.
\end{align*}
We observe that all terms  in the last three lines of~\eqref{secondrelin} can be estimated using young's inequality. It remains to estimate the term $ | \t z - \t \tz|^2 $. Therefore, the phase equation is used~\eqref{weakz}.
\begin{align*}
| \t z - \t \tz|^2 = | \psi_z( \theta , z) - \psi_z ( \tet, \tz) |^2 \leq 2 | \psi _ z( \theta ,z) - \psi _ z( \tet ,z) |^2 + 2 | \psi_z ( \tet, z) - \psi_z( \tet ,\tz) |^2 \,.
\end{align*}
For the second term on the right-hand side, we immediately find
\begin{align*}
| \psi_z ( \tet, z) - \psi_z( \tet ,\tz) |^2 \leq  \left  ( \int_0^1 \psi_{zz} ( \tet , \tz +s( z-\tz) ) \de s \right )^2 (z-\tz)^2 \leq c  (z-\tz)^2\,.
\end{align*}
For the first one, we observe that
\begin{multline}
\int_{\sqrt{\tet} }^{\sqrt{\theta}} 2s\psi_{z\theta}( s^2 , z) \de s = \psi_z \left ( \left(\sqrt{\theta}\right)^2 , z\right ) - \psi_z\left ( \left(\sqrt{\tet}\right)^2 ,z\right ) \\
= \psi_z(\theta ,z) - \psi_z(\tet,z) 
 = \psi_z\left (\exp({\log\theta}), z\right ) - \psi_z \left ( \exp({\log \tet}), z\right ) = \int_{\log\tet} ^ {\log \theta} \exp(s )\psi_{\theta z}( \exp(s), z) \de s \,.
\end{multline}
The bounds on $\psi_{z\theta} $ guarantee 
\begin{multline*}
  | \psi _ z( \theta ,z) - \psi _ z( \tet ,z) |^2 = \left ( \int_{\sqrt{\tet} }^{\sqrt{\theta}} 2s\psi_{z\theta}( s^2 , z) \de s\right ) \left ( \int_{\log \tet} ^ {\log \theta} \exp(s) \psi_{\theta z}(\exp( s), z) \de s\right ) \\ \leq c (\sqrt{\theta } - \sqrt{\tet} ) ( \log \theta -\log \tet) 
\end{multline*}
such that we conclude as in the proof of Lemma~\ref{lem:est} 
\begin{align*}
| \t z - \t \tz |^2 \leq c ( | z - \tz|^2 + \theta - \tet -\tet(\log \theta - \log \tet) ) \,.
\end{align*}

Putting everything together, we observe
\begin{align*}
\mathcal{E}(\f u (t)| \tu(t)) + \int_0^t \mathcal{W}(\f u | \tu) \de s  \leq \mathcal{E}(\f u(0)| \tu(0)) +c \int_0^t( \|\t \log \tet\|_{L^\infty}+ \| \t \tz\|_{L^\infty}+\| \curl\t \f\tA \|_{L^\infty} ) \mathcal{E}(\f u| \tu) \de s   
\\
+ c\int_0^t  \left (\| \nabla \log \tet\|_{L^\infty}^2 + \| \di ( \kappa(\tet,\tz) \nabla \log \tet) \|_{L^\infty} + \| \t \log \mu(z) \|_{L^\infty} +\| \curl \tA  \|_{L^\infty} ^2 \right ) \mathcal{E}(\f u | \tu) \de s  \,.
\end{align*}
Gronwall's estimate provides the assertion. 
\small
\bibliographystyle{abbrv}

\end{document}